\newtheorem*{thm-plain}{Theorem}
\newtheorem{thm}{Theorem}[section]
\newtheorem*{thmw}{Theorem}
\newtheorem{lem}[thm]{Lemma}
\theoremstyle{definition}
\theoremstyle{remark}
\newcommand{\Co}{\mathbb{C}}
\newcommand{\R}{\mathbb{R}}
\newcommand{\N}{\mathbb{N}}
\newcommand{\Z}{\mathbb{Z}}
\newcommand{\alt}{\mathfrak{A}}
\newcommand{\sym}{\mathfrak{S}}
\newcommand{\dih}{\mathfrak{D}}
\newcommand{\cyc}{\mathfrak{C}}
\newcommand{\SOr}{\mathrm{SO}}
\newcommand{\PSL}{\mathrm{PSL}}
\newcommand{\Or}{\mathrm{O}}
\newcommand{\A}{\mathrm{A}}
\newcommand{\BC}{\mathrm{BC}}
\newcommand{\D}{\mathrm{D}}
\newcommand{\Rr}{\mathrm{R}}
\newcommand{\Qq}{\mathrm{Q}}
\newcommand{\Pp}{\mathrm{P}}
\newcommand{\Ss}{\mathrm{S}}
\newcommand{\Tt}{\mathrm{T}}
\newcommand{\E}{\mathrm{E}}
\newcommand{\F}{\mathrm{F}}
\newcommand{\Hn}{\mathrm{H}}
\newcommand{\To}{\rightarrow}
\newcommand{\MTo}{\mapsto}
\newcommand{\plustimes}{\mathpalette\plustimesinner\relax}
\newcommand{\plustimesinner}[2]{%
  \mathbin{\vphantom{+}\ooalign{$#1+$\cr\hidewidth$#1\times$\hidewidth\cr}}%
}
\newcommand{\Wp}{W^{\scriptscriptstyle+}}
\newcommand{\Wt}{W^{\scriptscriptstyle\times}}
\newcommand{\Mt}{M^{\scriptscriptstyle\times}}
\newcommand{\Gt}{G^{\scriptscriptstyle\times}}
\newcommand{\Gs}{G^{\scriptscriptstyle\plustimes}}
\newcommand{\Ws}{W^{\scriptscriptstyle\plustimes}}
\newcommand{\subgr}{<}
\title[Characterization of reflection-rotation groups]{Characterization of finite groups generated by reflections and rotations}
\author{Christian Lange}
\thanks{The results of this paper appear in the author's thesis written at the University of Cologne \cite{Lange_thesis}. The author was supported by a `Kurzzeitstipendium für Doktoranden' by the German Academic Exchange Service (DAAD)}
\address{Christian Lange, Mathematisches Institut der Universit\"at zu K\"oln, Weyertal 86-90, 50931 K\"oln, Germany}
\email{clange@math.uni-koeln.de}
\begin{document}

\begin{abstract} We characterize finite groups $G$ generated by orthogonal transformations in a finite-dimensional Euclidean space $V$ whose fixed point subspace has codimension one or two in terms of the corresponding quotient space $V/G$.
\end{abstract}\maketitle

\section{Introduction}
\label{sec:Pse_Introduction}


We call a finite group generated by \emph{reflections} and \emph{rotations} in a finite-dimensional Euclidean space, i.e. by orthogonal transformations with codimension \emph{one} and \emph{two} fixed point subspaces, a \emph{reflection-rotation group}. We call it a \emph{rotation group} if it is generated by rotations. For a finite orthogonal group $G$ acting on $\R^n$ the quotient space $\R^n/G$ inherits different structures from $\R^n$, e.g. a topology, a metric and a piecewise linear (PL) structure. It is easy to see that $G$ is a reflection-rotation group, if the quotient space $\R^n/G$ is a piecewise linear manifold with boundary and that in this case $G$ contains a reflection if and only if the boundary of $\R^n/G$ is nonempty (cf. Section \ref{sec:only-if}). In this paper we show that the converse also holds.

\begin{thmw} \label{thm:mikhailovas_theorem} For a finite subgroup $G<\Or_n$ the quotient space $\R^n/G$ is a PL manifold with boundary, if and only if $G$ is a reflection-rotation group. In this case $\R^n/G$ is either PL homeomorphic to the half space $\R^{n-1}\times\R_{\geq 0}$ and $G$ contains a reflection or $\R^n/G$ is PL homeomorphic to $\R^n$ and $G$ does not contain a reflection.
\end{thmw}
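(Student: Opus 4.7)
\begin{prf}
The plan is to argue by induction on $n$. The base cases $n = 1, 2$ are immediate: every finite subgroup of $\Or_1$ or $\Or_2$ is cyclic or dihedral, hence a reflection-rotation group, and in those dimensions $\R^n/G$ is explicitly a half-line or line, a closed planar sector or the full plane.

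For the inductive step, the ``only if'' direction is the easy one sketched in the excerpt and relegated to Section \ref{sec:only-if}: if some $g\in G$ fixes a subspace of codimension $k \geq 3$, then near a generic point of that fixed subspace the link in $\R^n/G$ is a cyclic quotient of $S^{k-1}$ by a non-reflection rotation, which is not a PL sphere, contradicting the manifold hypothesis. The substance is the converse direction together with the identification of the PL type.

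The key reduction is that $\R^n/G$ is the open cone on $S^{n-1}/G$, and a PL homeomorphism $S^{n-1}/G \to S^{n-1}$ (respectively $\to D^{n-1}$) extends radially to a PL homeomorphism $\R^n/G \to \R^n$ (respectively to the half-space $\R^{n-1}\times\R_{\geq 0}$). So the task becomes showing that $S^{n-1}/G$ is PL $S^{n-1}$ when $G$ has no reflections and PL $D^{n-1}$ otherwise. First, I would show $S^{n-1}/G$ is a PL manifold (with or without boundary): at each $[x]\in S^{n-1}/G$ a neighborhood is PL-homeomorphic to $T_xS^{n-1}/G_x$, a linear quotient in dimension $n-1$. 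This requires a Steinberg-type structural lemma asserting that each point stabilizer $G_x$ is itself a reflection-rotation group in $\Or_{n-1}$; the inductive hypothesis then makes $\R^{n-1}/G_x$ PL-homeomorphic to $\R^{n-1}$ or a half-space, so that $S^{n-1}/G$ is locally a PL manifold with boundary. Second, the global type: $S^{n-1}/G$ is compact and connected, and for $n \geq 3$ it is simply connected, because every generator of $G$ is a reflection or rotation and hence fixes points on $S^{n-1}$, giving a trivial class in $\pi_1(S^{n-1}/G)$. A simply-connected closed PL $(n-1)$-manifold is then PL $S^{n-1}$ by the PL Poincar\'e conjecture; in the boundary case a secondary induction on the boundary --- itself the quotient of an arrangement of reflection hyperplanes in $S^{n-1}$ --- recovers $D^{n-1}$.

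The main obstacle, in my view, is the Steinberg-type lemma that each $G_x$ is a reflection-rotation group: unlike the complex reflection case there is no purely algebraic proof, and one must argue geometrically, very likely by leaning on the classification of irreducible reflection-rotation groups. A secondary obstacle is the dimension $n-1 = 4$ case of the PL Poincar\'e conjecture, which is not known in general; this forces a direct inspection of the finite list of irreducible reflection-rotation groups in $\R^5$ to verify the conclusion by hand there.
\end{prf}
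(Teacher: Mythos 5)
Your high-level skeleton (cone off $S^{n-1}/G$, induct on dimension, control isotropy groups, invoke the PL Poincar\'e conjecture) does overlap with pieces of the paper, but as written the proposal has two genuine mathematical errors and omits the main engine of the actual proof. First, your sketch of the only-if direction is wrong: reflection-rotation groups routinely contain elements whose fixed subspace has codimension $\geq 3$ (e.g.\ $-\mathrm{id}\in W(\BC_4)$, or a product of rotations in orthogonal planes), so the existence of such an element cannot be the obstruction, and the link at a generic point of its fixed space need not be a lens-space-like quotient of the sort you describe. The paper's only-if argument is different: it shows by induction that the normal subgroup $G_{rr}$ generated by all isotropy groups $G_v$, $v\neq 0$, acts with the same isotropy as $G$, so $G/G_{rr}$ acts freely on the simply connected space $S^{n}/G_{rr}$ and must be trivial. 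Second, and more seriously, your claim that ``a simply-connected closed PL $(n-1)$-manifold is then PL $S^{n-1}$'' is false for $n-1\geq 4$ ($S^2\times S^2$, $\Co P^2$, \dots); the generalized Poincar\'e conjecture needs $H_*(M;\Z)=H_*(S^{n-1};\Z)$ as a hypothesis. Establishing this homology condition for the quotients is a substantial part of the paper's work: it is done via a transfer argument (Lemma \ref{lem:homology_condition}) using pairs of subgroups of coprime index whose quotients are already known, and for the exceptional groups $M(\Ss_6)$, $M(\Qq_7)$, $M(\Qq_8)$, $M(\Tt_8)$ and $\Delta_\varphi(W\times W)$ this requires locating specific subgroups from the classification. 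Your proposal contains no mechanism for this step.

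Beyond these errors, you should be aware that what you propose is essentially the \emph{classification-free} strategy the author explicitly defers to future work (Section \ref{ch:outlook}): granting properties (i)--(iii) there, your outline would go through for $n-1\geq 5$. The paper's actual proof is instead a case-by-case verification over the full classification of reflection-rotation groups, organized by a ``PL linearization principle'' that lets one quotient out a normal rotation subgroup, replace the residual action by a linear one, and induct on group order; the individual cases are then handled by quite different tools (Coxeter fundamental domains for real reflection groups, Chevalley invariants for unitary reflection groups, Perelman plus equivariant smoothing and Dinkelbach--Leeb in dimension $\leq 4$, and for $M(\Rr_5)$ --- where, as you correctly note, the PL Poincar\'e conjecture in dimension $4$ is unavailable --- an explicit fundamental domain whose boundary identification space is shown to be collapsible). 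Your ``Steinberg-type lemma'' is indeed true, but in the paper it appears as a \emph{corollary} of the main theorem rather than an input, with the needed special cases imported from the classification paper during the induction; and your one-sentence treatment of the boundary case (a compact PL manifold with sphere boundary need not be a ball) would also need the fundamental-domain or linearization arguments to be made precise.
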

A partial result in this direction is contained in a paper by Mikha\^ilova from 1984 \cite{Mikhailova}. In that paper the if direction of our result is verified for many rotation groups but only in the topological category. Large classes of reflection-rotation groups are real reflection groups, their orientation preserving subgroups and unitary reflection groups considered as real groups. However, there are many more examples (cf. Appendix \ref{app:class-rr-groups}), in particular, since a reducible reflection-rotation group in general does not split as a product of irreducible components. A complete classification of reflection-rotation groups has only been obtained recently \cite{LaMik} based on earlier work by Mikha\^ilova \cite{Maerchik,MR608821,Mikhailova2}, Huffman and Wales \cite{MR570727,MR0401936,MR0401937,MR0435243} and others. It comprises both exceptional rotation groups and building blocks of infinite families of rotation groups that are not considered in \cite{Mikhailova}. The largest irreducible rotation group among them occurs in dimension $8$. It is an extension of the alternating group on $8$ letters by a nonabelian group of order $2^7$ and in a sense the largest exceptional irreducible rotation group (cf. \cite{LaMik}). Reducible rotation groups that have not been studied in \cite{Mikhailova} occur, for example, in the product of two copies of a reflection group $W$ of type $\Hn_3$ or $\Hn_4$ due to the existence of outer automorphisms of $W$ that map reflections onto reflections but cannot be realized through conjugation by elements in its normalizer.

We adapt some of the methods from \cite{Mikhailova} as to also work in the piecewise linear category and describe new methods to prove the if direction of our result by verifying its conclusion for all reflection-rotation groups. For instance, we apply a result on equivariant smoothing of piecewise linear manifolds \cite{Lange} and the equivariant Poincar\'e conjecture in the case $n=4$ (cf. Section \ref{sub:dim_four_groups_hom}) and the generalized Poincar\'e conjecture in the case $n>5$ (cf. Section \ref{sub:poincare}). In this way we moreover simplify the consideration compared to \cite{Mikhailova} and make the proofs therein rigorous (cf. Sections \ref{sub:dim_four_groups_hom}, \ref{sub:exceptional_groups_hom}). Finally, in Section \ref{ch:outlook} we suggest an approach to prove our main result that would essentially dispense with the classification of reflection-rotation groups.

Notice that the only-if direction of our result does not hold in the topological category. The quotient $S^3/P$ of a $3$-sphere by the binary icosahedral group $P<\SOr_4$ is Poincar\'e's homology sphere and it follows from Cannon's double suspension theorem that its double suspension $\Sigma^2(S^3/P)$ is a topological $5$-sphere \cite{MR541330}. Therefore, the quotient space $\R^2 \times \R^4/P$ is homeomorphic to $\R^6$, though $P$ is not a rotation group. However, using this paper's result it can be shown that the binary icosahedral group is essentially the only counterexample to the only-if direction in the topological category (cf. \cite{Lange1}).\newline

\emph{Acknowledgements.} I thank Alexander Lytchak for introducing me to the problem and for helpful discussions. I thank the referee for carefully reading the paper and for making valuable comments that helped to improve the exposition.

\section{Notation}
\label{sec:not_ter}

We denote the cyclic group of order $n$ and the dihedral group of order $2n$ by $\cyc_n$ and $\dih_n$, respectively. We denote the symmetric and alternating group on $n$ letters by $\sym_n$ and $\alt_n$. Classical Lie groups are denoted like $\mathrm{SO}_n$ and $\Or_n$. For a finite subgroup $G<\Or_n$ we denote its orientation preserving subgroup as $G^{\scriptstyle+}<\SOr_n$. In particular, we write $\Wp$ for the orientation preserving subgroup of a reflection group $W$. 

\section{Preliminaries}

\subsection{PL spaces and PL structures}
\label{sub:pl_structure}
We remind of some concepts from piecewise linear topology. For more details we refer to \cite{MR0248844} and \cite{MR0350744}. A subset $P\subset \R^n$ is called a \emph{polyhedron} if, for every point $x \in P$, there exists a finite number a simplices contained in $P$ such that their union is a neighborhood of $x$ in $P$.
Every polyhedron $P$ in $\R^n$ is the underlying space of some (locally finite Euclidean) simplicial complex $K$ in $\R^n$ \cite[Lem.~3.5]{MR0248844}. Such a complex is called a \emph{triangulation} of $P$. Conversely, the underlying space of a simplicial complex $K\subset \R^n$ is a polyhedron. A continuous map $f:P\To Q$ between polyhedra $P\subset \R^n$, $Q\subset \R^m$ is called \emph{piecewise linear} (\emph{PL}), if its graph $\{(x,f(x))|x \in P\} \subset \R^{m+n}$ is a polyhedron. 

A \emph{PL chart} $(P,\varphi)$ for a topological space $X$ is an embedding $\varphi: P \To X$ of a compact polyhedron $P$ (i.e. a Euclidean polyhedron in terms of \cite{MR0248844}). Two PL charts $(P,\varphi)$ and $(Q,\phi)$ are said to be \emph{compatible} if $\varphi^{-1}(\phi(Q))$ is a compact polyhedron and $\phi^{-1}\circ\varphi: \varphi^{-1}(\phi (Q)) \To Q$ is piecewise linear. An \emph{atlas} (\emph{base of a PL structure} in terms of \cite{MR0248844}) on $X$ is a family of compatible PL charts for $X$ such that for each point $x \in X$ there is a chart $(P,\varphi)$ for which $\varphi(P)$ is a topological neighborhood of $x$. A \emph{PL structure} on $X$ is a maximal atlas of $X$. A second-countable topological Hausdorff space $X$ endowed with a PL structure is called a \emph{PL space} (cf. \cite[p.~77]{MR0248844}). A PL space $X$ is called a \emph{PL manifold} (\emph{with boundary}) of dimension $n$, if for every point $p\in X$ there exists a chart $(P=\Delta^n, \varphi)$ of $X$ such that $p \in \varphi(\mathrm{Int}(P))$ ($p \in \mathrm{Int}_X(\varphi(P))$) (cf. \cite[p.~79]{MR0248844}). The boundary of a PL $n$-manifold with boundary, i.e. the complement of its PL manifold points, is a PL $(n-1)$-manifold without boundary. For instance, the standard simplex $\Delta^{n}$ is a PL manifold with boundary, a (standard) \emph{PL $n$-ball}. Its boundary $\partial \Delta^{n}$ is a PL manifold without boundary, a (standard) \emph{PL $(n-1)$-sphere}.

Every PL space can be triangulated by a simplicial complex $K\subset \R^{N}$ and can thus be realized as a polyhedron \cite[Lem.~3.5, p.~80]{MR0248844}\cite[Thm.~7.1, p.~53]{MR0488059}. Conversely, an (abstract) locally finite simplicial complex $K$ has a natural PL structure. It is a PL $n$-manifold (with boundary), if and only if the link of every vertex is a PL $(n-1)$-sphere (or a PL $(n-1)$-ball) (cf. \cite[p.~24]{MR0350744}). PL maps and PL homeomorphisms between PL spaces are defined in the usual way via compositions with charts (cf. \cite[p.~83]{MR0248844}). A map between two PL spaces $X$ and $Y$ is a PL homeomorphism if and only if there exist triangulations $K$ and $L$ of $X$ and $Y$ with respect to which $f$ is a simplicial isomorphism between simplical complexes (cf. \cite[p.~84, Thm.~3.6 C]{MR0248844}). 

The open cone $CX$ of a compact PL space $X$ inherits a natural PL structure from $X$. If $X$ is embedded as a polyhedron $P$ in some $\R^N$, then $CX$ is PL homeomorphic to the internal open cone $CX= \R_{\geq 0} \cdot (P+e_{n+1}) \subset \R^{n+1}$. Here $e_{n+1}$ denotes the last canonical basis vector of $\R^{n+1}$. The cone $CX$ is PL homeomorphic to some $\R^n$ if and only if $X$ is PL homeomorphic to the standard PL $(n-1)$-sphere $\partial \Delta^n$. To see the only-if direction of this statement one can triangulate $X$ by a simplicial complex $K$ and extend this triangulation to a triangulation $L$ of $CX$ such that the link of the cone point in $L$ is $K$ \cite[proof of Prop.~2.9]{MR0350744}. Then the statement follows from the remark above on links in simplical complexes that are PL manifolds. 

\subsection{PL quotients}
\label{sub:pl_quotients}

Let $\sim$ be an equivalence relation on a PL space $X$. We would like to know whether $X/\sim$ is a PL space such that the projection map $q: X \To X/\sim$ is PL. More precisely, if there exists a PL space $Y$ and a PL map $f:X\To Y$ that induces a homeomorphism $\overline{f}:X/\sim \To Y$. In general such a pair $(Y,f)$ does not exist (cf. \cite{MR1114628}). However, if such a pair exists, then the following universal property shows that $Y$ is unique up to PL homeomorphism and can thus be considered the quotient of $X$ with respect to $\sim$ in the PL category (cf. \cite{MR1114628}). Note that we identify $X\backslash\sim$ and $Y$ via $\overline{f}$.

\begin{lem}\label{lem:universal_prop}
If $Y'$ is a PL space and $f':X \To Y'$ a PL map such that $x\sim y$ for $x,y\in X$ implies $f'(x)=f'(y)$, then the unique map $g:Y \To Y'$ is PL.
\end{lem}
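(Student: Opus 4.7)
The plan is the standard universal-property argument in the PL category. Continuity and uniqueness of $g$ are immediate from the topological universal property: via the homeomorphism $\overline{f}:X/\sim\to Y$, the space $Y$ carries the quotient topology from $X$, so the continuous map $f'$, being constant on equivalence classes, descends uniquely to a continuous $g:Y\to Y'$ with $g\circ f = f'$. The real content of the lemma is that this $g$ is PL, and this I would verify locally on $Y$.

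Fix $y_0\in Y$ and pick a preimage $x_0\in f^{-1}(y_0)$ together with a PL chart $(R,\alpha)$ of $X$ around $x_0$. Since $f$ and $f'$ are PL, after possibly shrinking $R$ one obtains PL charts $(P,\varphi)$ of $Y$ around $y_0$ and $(P',\varphi')$ of $Y'$ around $g(y_0)$ with $f(\alpha(R))\subset\varphi(P)$, $f'(\alpha(R))\subset\varphi'(P')$, and PL maps of polyhedra
\[
\beta := \varphi^{-1}\circ f\circ\alpha : R \to P, \qquad \gamma := (\varphi')^{-1}\circ f'\circ\alpha : R \to P'.
\]
The identity $f' = g\circ f$ translates to $\gamma = ((\varphi')^{-1}\circ g\circ\varphi)\circ\beta$, which pins down $(\varphi')^{-1}\circ g\circ\varphi$ on the subpolyhedron $\beta(R)\subset P$. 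Choosing simultaneous triangulations of $R$, $P$, $P'$ with respect to which $\beta$ and $\gamma$ are simplicial (standard simplicial approximation for PL maps of compact polyhedra, cf.\ \cite{MR0248844}), each simplex $\beta(\tau)$ of the induced triangulation on $\beta(R)$ is the image of some simplex $\tau$ of $R$; the factorization then forces $(\varphi')^{-1}\circ g\circ\varphi$ on $\beta(\tau)$ to be the unique affine map onto $\gamma(\tau)$, well defined because $f'$ is constant on the fibers of $f$ and hence of $\beta$. This makes $g$ simplicial, and hence PL, on $\beta(R)$.

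The main obstacle is upgrading this local PL-ness from the subpolyhedron $\beta(R)$ to a full neighborhood of $y_0$ in $Y$: in general $\beta(R)$ is not a neighborhood of $y_0$ in $P$, since the single chart $(R,\alpha)$ sees only one point of the fiber $f^{-1}(y_0)$, while $Y$ near $y_0$ can also be approached through other fiber points and through saturations of $\alpha(R)$ under $\sim$. To handle this, I would invoke surjectivity of $f$ together with second countability and local compactness of PL spaces to select finitely many PL charts $(R_i,\alpha_i)$ of $X$ whose $f$-images $f(\alpha_i(R_i))$ jointly cover a compact polyhedron neighborhood of $y_0$ in $\varphi(P)$. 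Running the simplicial argument on each $\beta_i(R_i)$ yields PL-ness of $g$ there, and the set-theoretic uniqueness of $g$ from the first paragraph guarantees compatibility on overlaps, so the pieces patch together to a PL description of $g$ on a neighborhood of $y_0$. Since $y_0$ was arbitrary, $g$ is PL.
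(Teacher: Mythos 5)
Your single-chart analysis is essentially sound, and you correctly identify the crux of the lemma: one chart $(R,\alpha)$ of $X$ only yields PL-ness of $g$ on the subpolyhedron $\beta(R)\subset P$, which need not be a neighborhood of $y_0$. But your proposed fix contains a genuine gap. You want finitely many charts $(R_i,\alpha_i)$ whose images $f(\alpha_i(R_i))$ cover a compact polyhedron neighborhood $N$ of $y_0$, and you invoke surjectivity of $f$, second countability and local compactness to get them. The sets $f(\alpha_i(R_i))$ are, however, merely compact (hence closed); they are not neighborhoods of the points they are chosen for unless the quotient map $q$ is open. A cover of a compact set by compact sets admits no finite subcover in general, so the finiteness extraction does not go through as stated. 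What is really needed is either openness of $q$ (so that $f(\alpha(R))$ is a neighborhood of $f(x_0)$) or properness of $f$ (so that $f^{-1}(N)$ is compact and can itself be covered by finitely many chart interiors). Neither is automatic for an arbitrary equivalence relation admitting a PL quotient, and you establish neither; this is exactly where the remaining work sits.

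The paper makes the openness of $q$ explicit and thereby avoids the covering apparatus altogether: since $q$ is open and the image of a compact polyhedron under a PL map is a compact polyhedron, $q(\varphi(P))$ is itself a compact polyhedron neighborhood of $y$, so one may shrink the chart $(Q,\phi)$ of $Y$ until $\phi(Q)=q(\varphi(P))$; a single chart of $X$ then surjects onto the chart of $Y$. With that arranged, no simplicial approximation is needed either: the graph of the local representative $\phi'^{-1}g\phi$ is exactly the image of $P$ under the PL map $\left(\phi^{-1}q\varphi,\ \phi'^{-1}f'\varphi\right)$, hence a compact polyhedron, which is the definition of a PL map. (For fairness: openness of $q$ is asserted rather than proved in the paper, and it can fail for a general $\sim$; it does hold in every situation where the lemma is applied, namely quotients by finite group actions. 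If you add that hypothesis, your covering step becomes superfluous and your argument collapses to the paper's one-chart proof.)
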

\begin{proof} Let $x\in X$, $y=q(x)$ and $y'=f'(x)$. We choose charts $(P,\varphi)$, $(Q,\phi)$ and $(Q',\phi')$ about $x$, $y$ and $y'$ that define topological neighborhoods of the respective points. Since $q$ is open and since the image of a compact polyhedron under a PL map is a compact polyhedron \cite[Cor.~2.5]{MR0350744}, we can assume that $q\varphi(P)=\phi(Q)$ and $P=\varphi^{-1}f'^{-1}\phi'(Q')$. In particular, we have $\phi^{-1}g^{-1}\phi'(Q')=Q$ and $\mathrm{graph}(\phi'^{-1}g\phi)=( \phi^{-1}q\times \phi'^{-1}f')(\varphi(P)) \subset Q \times Q'$ is a compact polyhedron, i.e. $\phi'^{-1}g\phi:Q \To Q'$ is a PL map. Now the general facts that a finite union $\bigcup_i P_i$ of compact polyhedra $P_i$ is a compact polyhedron and that a map $f:\bigcup_i P_i \To Q$ is PL if all restrictions $f_{|P_i}$ are PL \cite[p.~5, 1.5 (4)]{MR0350744}, imply the condition for all charts, i.e. the map $g$ is PL.
\end{proof}
A Euclidean vector space $\R^n$ carries a natural PL structure with respect to which it is a PL manifold and with respect to which $\Or_n$ acts by PL homeomorphisms on it. In the following section we realize $\R^n/G$ as a simplicial complex and show that the projection from $\R^n$ to $\R^n/G$ with the induced PL structure is a PL map, i.e. that the quotient $\R^n/G$ is in a natural way a PL space.

\subsection{Admissible triangulations}
\label{sub:ad_triangulations}

Let $G$ be a finite group. A simplicial complex $K$ is called a \emph{$G$-complex}, if $G$ acts simplicially on it. It is called a \emph{regular} $G$-complex, if for each subgroup $H<G$ and each tuple of elements $g_0,g_1,\ldots,g_n \in H$ such that both of the sets $\{v_0,\ldots,v_n\}$ and $\{g_0v_0,\ldots,g_nv_n\}$ describe vertices of a simplex in $K$, there exists an element $g\in H$ such that $gv_i = g_i v_i$ for all $i$ (cf. \cite[Ch.~III, Def.~1.2, p.~116]{MR0413144}). The second barycentric subdivision of a $G$-complex $K$ is always regular and for a regular $G$-complex one can define in a natural way a simplicial complex $K/G$ whose underlying space is homeomorphic to the topological quotient $|K|/G$ \cite[p.~117]{MR0413144}. The vertices of $K/G$ are the $G$-orbits of the vertices of $K$ and a subset of these vertices forms a simplex if and only if there are representatives of these vertices in $K$ that form a simplex in $K$. 

For a finite subgroup $G<\Or_n$ we call a triangulation $K$ of $\R^n$ \emph{admissible (for the action of $G$ on $\R^n$)}, if $K$ is a regular $G$-complex that contains the origin as a vertex. The following lemma shows that admissible triangulations always exist.
\begin{lem}\label{lem:common_subdivision}
For a finite subgroup $G<\Or_n$ there exists a triangulation $K$ of $\R^n$ that is admissible for the action of $G$ on $\R^n$.
\end{lem}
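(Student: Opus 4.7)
The plan is to build an admissible triangulation in three stages. First I would produce a $G$-invariant locally finite polyhedral cell decomposition $\mathcal{P}$ of $\R^n$ having the origin as a vertex. Starting from any locally finite triangulation $L_0$ of $\R^n$ that contains $0$ as a vertex, the image $gL_0$ is again such a triangulation for every $g\in G$, because $g\in\Or_n$ acts as a linear isomorphism fixing $0$. Define $\mathcal{P}$ to be the common refinement of the finite family $\{gL_0\}_{g\in G}$, whose cells are the nonempty intersections $\bigcap_{g\in G}\sigma_g$ with $\sigma_g$ a simplex of $gL_0$. Local finiteness is inherited from $L_0$ since $G$ is finite, the decomposition $\mathcal{P}$ is $G$-invariant because the group merely permutes the factors of the intersection, and $0$ remains a vertex.

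Next I would triangulate $\mathcal{P}$ canonically and $G$-equivariantly by taking its barycentric subdivision: the simplices of the resulting complex $K$ are the chains $[b(\sigma_0),b(\sigma_1),\dots,b(\sigma_k)]$ indexed by strictly increasing sequences $\sigma_0\subsetneq\sigma_1\subsetneq\dots\subsetneq\sigma_k$ of cells of $\mathcal{P}$, where $b(\sigma)$ denotes the barycenter of the convex cell $\sigma$. Because barycenters of polytopes are canonically defined, the cellular action of $G$ on $\mathcal{P}$ lifts to a simplicial action on $K$, and $K$ is a locally finite simplicial triangulation of $\R^n$ which still contains the origin (the barycenter of the $0$-cell $\{0\}$) as a vertex.

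At this point $K$ is a $G$-complex but may fail to be regular, so I would invoke the fact recalled immediately before the lemma that the second barycentric subdivision $K''$ of any $G$-complex is automatically regular. Since iterated barycentric subdivision preserves the properties of being a locally finite triangulation of $\R^n$ and of having $0$ as a vertex, $K''$ is the desired admissible triangulation. The step I expect to require the most care is the construction of $\mathcal{P}$: one has to check that the intersection pattern of finitely many locally finite simplicial complexes really assembles into a locally finite polyhedral cell decomposition rather than into a looser subdivision, which here follows easily from finiteness of $G$ and convexity of simplices.
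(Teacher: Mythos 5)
Your proposal is correct and follows essentially the same route as the paper: form the common subdivision of the $G$-translates of an initial triangulation containing the origin, and then pass to the second barycentric subdivision to obtain regularity (citing Bredon). The only difference is that you construct the common refinement explicitly via intersections of simplices and a barycentric subdivision of the resulting cell complex, whereas the paper outsources this step to \cite[Lem.~2.1]{Lange}.
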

\begin{proof} We start with any triangulation $t:\tilde{K} \To \R^n$ that contains the origin as a vertex and replace it by the common subdivision $t:K \To \R^n$ of the triangulations $g\circ t: |\tilde{K} |\To \R^n$, $g\in G$ as in \cite[Lem.~2.1]{Lange} (cf. \cite[Lemma 67]{Lange_thesis}). The resulting triangulation defines a $G$-complex. Upon passage to the second barycentric subdivision, we can assume that this $G$-complex is regular \cite[p.~117]{MR0413144}.
\end{proof}

Let $K$ be an admissible triangulation for the action of $G<\Or_n$ on $\R^n$. Then $Y=K/G$ is a simplicial complex and the projection $K\To K/G$ maps simplices linearly onto simplices. In particular, $Y$ is a PL space and the projection $K\To K/G$ is PL, i.e. $\R^n/G$ is in a natural way a PL space (cf. Section \ref{sub:pl_quotients}). The link of the origin in $K$ is also a regular $G$-complex and its quotient by $G$ is simplicially isomorphic to the link of the origin in $K/G$. Hence, the PL space $\R^n/G$ is a PL manifold (with boundary) if and only if this link in $K/G$ is a PL $(n-1)$-sphere (or a PL $(n-1)$-ball) (cf. Section \ref{sub:pl_structure}). Radially projecting the link of the origin in $K$ to the unit sphere $S^{n-1}$ defines a PL structure on $S^{n-1}$ and induces a PL structure on $S^{n-1}/G$. We call triangulations and PL structures of $S^{n-1}$ and $S^{n-1}/G$ that arise in this way \emph{admissible (for the action of $G$ on $S^{n-1}$)}. Two admissible PL structures on $S^{n-1}/G$ need not be identical but are PL homeomorphic (cf. \cite[pp.~20-21]{MR0350744}, ``pseudoradial projection''). Hence, the question if $\R^n/G$ is a PL manifold is equivalent to the question if $S^{n-1}/G$ is a PL sphere with respect to one and then any admissible PL structure. The following lemma gives a necessary condition for this to hold. For details we refer to \cite{Lange_thesis}.

\begin{lem} \label{lem:PL_isotropy_condition}
Let $G< \Or_n$ be a finite subgroup and suppose $S^{n-1}$ is triangulated by a simplicial complex $K$ in an admissible way for the action of $G$. Then $S^{n-1}/G$ is a PL manifold (with boundary) with respect to the induced PL structure if and only if for every vertex $v$ of $K$ the quotient space $T_v S^{n-1}/G_v$ is a PL manifold (with boundary). In particular, this is the case if $\R^n/G$ is a PL manifold (with boundary).
\end{lem}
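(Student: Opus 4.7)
The plan is to reduce the PL-manifold property of $S^{n-1}/G$ to a link condition at each vertex of the simplicial complex $K/G$, and then to identify that condition with the PL-manifold property of the tangent-sphere quotient $T_v S^{n-1}/G_v$. Since $K$ is a regular $G$-complex, $K/G$ is a simplicial complex whose underlying space is $S^{n-1}/G$ equipped with the induced admissible PL structure (cf.\ Section~\ref{sub:ad_triangulations}). By the link characterization of PL manifolds recalled in Section~\ref{sub:pl_structure}, $K/G$ is a PL $(n-1)$-manifold (with boundary) if and only if, for every vertex, the link is a PL $(n-2)$-sphere (respectively, a PL $(n-2)$-sphere or $(n-2)$-ball). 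The vertices of $K/G$ are the orbits $[v]$ of vertices $v \in K$, and regularity yields the standard identification $\mathrm{lk}([v],K/G) \cong \mathrm{lk}(v,K)/G_v$.

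To identify this quotient link with a tangent-sphere quotient, I invoke the pseudoradial projection from $v$ (cf.\ \cite[pp.~20--21]{MR0350744}); after replacing $K$ by a common $G_v$-invariant subdivision (as in the proof of Lemma~\ref{lem:common_subdivision}), it produces a $G_v$-equivariant PL homeomorphism between $\mathrm{lk}(v,K)$ and an admissible triangulation of the unit sphere $S(T_v S^{n-1}) \subset T_v S^{n-1}$. Quotienting by $G_v$ yields $\mathrm{lk}(v,K)/G_v \cong S(T_v S^{n-1})/G_v$ in its admissible PL structure. In radial coordinates, $T_v S^{n-1}/G_v$ is the open cone on $S(T_v S^{n-1})/G_v$, so by the cone characterization of Section~\ref{sub:pl_structure} it is a PL $(n-1)$-manifold (with boundary) if and only if $S(T_v S^{n-1})/G_v$ is a PL $(n-2)$-sphere (respectively, a PL sphere or PL ball). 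Chaining the two equivalences gives the stated ``if and only if''.

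For the ``in particular'' clause, recall that the admissible triangulation $K$ of $S^{n-1}$ arises, by radial projection, from the link of the origin in an admissible triangulation $\tilde K$ of $\R^n$. Hence the link of $[0]$ in $\tilde K/G$, with its admissible PL structure, is PL homeomorphic to $S^{n-1}/G$. If $\R^n/G$ is a PL manifold (with boundary), this link is a PL sphere or PL ball, so $S^{n-1}/G$ is a PL manifold and, by the equivalence just proved, $T_v S^{n-1}/G_v$ is a PL manifold for every vertex $v$ of $K$.

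The principal obstacle is producing a genuinely $G_v$-equivariant PL identification of $\mathrm{lk}(v,K)$ with an admissible triangulation of $S(T_v S^{n-1})$: the radial projection from $v$ is not piecewise linear on the nose, so one must combine the pseudoradial-projection trick with a $G_v$-invariant common subdivision, while carrying the with-boundary and without-boundary cases in parallel throughout the cone argument.
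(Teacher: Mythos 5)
The paper itself does not print a proof of this lemma (it defers the details to the thesis), but your argument follows exactly the route that the preliminaries are set up for: the link criterion for PL manifolds from Section~\ref{sub:pl_structure}, the identification $\mathrm{lk}([v],K/G)\cong\mathrm{lk}(v,K)/G_v$ coming from regularity of the $G$-complex, and the cone criterion identifying ``$T_vS^{n-1}/G_v$ is a PL manifold (with boundary)'' with ``$S(T_vS^{n-1})/G_v$ is a PL sphere (or ball)''. That chain of equivalences, and your treatment of the ``in particular'' clause via the link of the origin in an admissible triangulation of $\R^n$, are correct; the quotient-link identification you assert does follow from the regularity condition (apply it with $H=G$ to the two simplices $\{v,w_0,\dots,w_k\}$ and $\{v,g_0w_0,\dots,g_kw_k\}$ to produce an element of $G_v$ carrying one link simplex to the other).

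The one step whose justification does not work as written is the $G_v$-equivariant PL identification of $\mathrm{lk}(v,K)$ with an admissible triangulation of the unit tangent sphere $S(T_vS^{n-1})$. The radial (geodesic) projection from $v$ is a smooth, non-linear map from the polyhedron $\mathrm{lk}(v,K)\subset S^{n-1}$ onto a round sphere; passing to a $G_v$-invariant common subdivision cannot make it piecewise linear, and the pseudoradial projection of \cite[pp.~20--21]{MR0350744} compares two polyhedral join structures on one and the same polyhedron, which is not the situation here (one side is a subcomplex of $K$, the other an independently triangulated round sphere). The repair is already in the paper: the radial projection restricted to $\mathrm{lk}(v,K)$ is a $G_v$-equivariant PD homeomorphism onto $S(T_vS^{n-1})$, so Illman's equivariant version of Whitehead's theorem as recalled in Section~\ref{sub:approx} yields a $G_v$-equivariant PL homeomorphism from $\mathrm{lk}(v,K)$ to any admissible triangulation of $S(T_vS^{n-1})$, which then descends to a PL homeomorphism of the $G_v$-quotients. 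Since being a PL sphere or ball is invariant under PL homeomorphism, this closes the gap and the rest of your argument goes through unchanged.
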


\section{The only-if direction}
\label{sec:only-if}

Let us show by induction that a finite subgroup $G<\Or_n$ is a reflection-rotation group, if the quotient space $\R^{n}/G$ is a PL manifold with boundary and that in this case $G$ contains a reflection if and only if the boundary of $\R^{n}/G$ is nonempty. For $n\leq 2$ the claim is trivially true. Assume it holds for some $n\geq 2$ and let $G<\Or_{n+1}$ be a finite subgroup such that $\R^{n+1}/G$ is a PL manifold with boundary. Then all isotropy groups $G_v$ for $v\neq 0$ are reflection-rotation groups by Lemma \ref{lem:PL_isotropy_condition} and the induction assumption. Let $G_{rr}\triangleleft G$ be the reflection-rotation group generated by all of them and let $v \in S^{n}$. Then the inclusions
\[
	G_v = (G_v)_{rr}	\subseteq (G_{rr})_v \subseteq G_v
\]
imply that $G_v=(G_{rr})_v$ for all $v \in S^{n}$. This means that the action of $G/G_{rr}$ on $S^{n}/G_{rr}$ is free. Because of $n\geq 2$ the quotient space $S^{n}/G$ is simply connected by assumption and thus we conclude that $G$ and $G_{rr}$ coincide, i.e. that $G$ is a reflection-rotation group. Again by the induction assumption, $G_{rr}$ contains a reflection if and only if the boundary of $S^{n}/G_{rr}$ is nonempty. Hence, $G$ contains a reflection if and only if the boundary of $\R^n/G$ is nonempty and the claim follows by induction.

\section{Methods for the if direction}

Before proving the if direction of our main result, we provide several methods that allow us to identify the quotient spaces in question.

\subsection{PL linearization principle}
\label{sub:linearization_principle}
The idea of the PL linearization principle is to divide the determination of the PL quotient $\R^n/G$ for a finite subgroup $G < \Or_n$ into several steps. Let $H \triangleleft G$ be a normal subgroup and assume there exists a PL homeomorphism $F: \R^{n}/H \To \R^{n}$ and a homomorphism $r: G \To \Or_n$ with kernel $H$ such that the left square in the following diagram commutes
\[
	\begin{xy}
		\xymatrix
		{
		   G \times \R^{n}/H \ar[r] \ar[d]_{r\times F}  & \R^{n}/H \ar@{->>}[r] \ar[d]_{F}  & \R^{n}/G  \ar@{-->}[d]_{\tilde{F}}  \\
		   r(G) \ar[r] \times \R^{n} & \R^{n} \ar@{->>}[r]   & \R^{n}/r(G)  
		}
	\end{xy}
\]	
Then we say that the PL linearization principle can be applied to the groups $H\triangleleft G$. In this case $F$ induces a PL homeomorphism $\tilde{F}: \R^{n}/G \To \R^{n}/r(G)$ due to Lemma \ref{lem:universal_prop}. This reduces the determination of $\R^{n}/G$ to the determination of $\R^{n}/r(G)$ and one might look for a suitable normal subgroup of $r(G)$ in order to apply the PL linearization principle again. If the PL linearization principle can be applied to $H\triangleleft G$ and to $\tilde{H} \triangleleft r(G)$, then it can also directly be applied to $r^{-1}(\tilde{H})\triangleleft G$. 

\begin{lem} Suppose that the PL linearization principle can be applied to groups $H\triangleleft G$. If $g \in G$ is a reflection (rotation), then so is $r(g)$. In particular, if $G$ is generated by reflections and rotations, then so is $r(G)$.
\end{lem}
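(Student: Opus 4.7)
The plan is to pull the fixed-point set of $r(g)$ back through the equivariant PL homeomorphism $F$ and the orbit projection $\pi\colon \R^n \to \R^n/H$, and read off its codimension from that of $\mathrm{Fix}(g)$. Commutativity of the diagram means that $F$ intertwines the $G/H$-action on $\R^n/H$ with the $r(G)$-action on $\R^n$, so $F$ carries $\mathrm{Fix}_{\R^n/H}([g])$ bijectively onto $\mathrm{Fix}_{\R^n}(r(g))$. Lifting along $\pi$ yields
\[
\pi^{-1}\bigl(\mathrm{Fix}_{\R^n/H}([g])\bigr) \;=\; \{x \in \R^n : gx \in Hx\} \;=\; \bigcup_{h \in H} \mathrm{Fix}(h^{-1}g),
\]
a finite union of linear subspaces of $\R^n$. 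Since both $\pi$ (a finite-to-one PL surjection) and $F$ (a PL homeomorphism) preserve polyhedral dimension, this gives
\[
\dim \mathrm{Fix}(r(g)) \;=\; \max_{h \in H} \dim \mathrm{Fix}(h^{-1}g).
\]

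The second crucial ingredient is that $H \leq \SOr_n$. Because $F$ is a PL homeomorphism, $\R^n/H$ is a PL manifold without boundary, and the only-if direction already proved in Section~\ref{sec:only-if} forces $H$ to be a reflection-rotation group containing no reflection; equivalently, $H$ is a rotation group, and every $h \in H$ is orientation preserving.

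With these two ingredients the case analysis is short. If $g$ is a reflection, then $\det(h^{-1}g)=-1$ for every $h \in H$, so each $h^{-1}g$ is a nontrivial orthogonal transformation with $\dim \mathrm{Fix}(h^{-1}g) \leq n-1$, with equality attained at $h = 1$; hence $\dim \mathrm{Fix}(r(g)) = n-1$, and since $g \notin H$ (as $H$ contains no reflection) also $r(g) \neq 1$, so $r(g)$ is a reflection. If $g$ is a rotation, then every $h^{-1}g$ lies in $\SOr_n$, and is therefore either trivial or has fixed codimension at least $2$; the value $n-2$ is attained at $h = 1$, so either $g \in H$ and $r(g) = 1$, or $\dim \mathrm{Fix}(r(g)) = n-2$ and $r(g)$ is a rotation. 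The ``in particular'' statement is then immediate, since the images under $r$ of a generating set of reflections and rotations of $G$ generate $r(G)$ and consist of reflections, rotations, and possibly the identity. The only step that is not formal is the appeal to the already-proved only-if direction to force $H \leq \SOr_n$; dimension-preservation of $\pi$ on a finite union of linear subspaces is standard for finite PL group actions.
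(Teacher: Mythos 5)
Your argument is correct, and it supplies a proof that the paper actually omits (the lemma is stated there without justification). The two pillars of your proof are exactly the right ones: the identity $\pi^{-1}\bigl(\mathrm{Fix}_{\R^n/H}([g])\bigr)=\bigcup_{h\in H}\mathrm{Fix}(h^{-1}g)$ combined with dimension-preservation of the finite orbit map and of the PL homeomorphism $F$, and the observation that $H\subseteq\SOr_n$, which you legitimately and non-circularly extract from the only-if direction of Section \ref{sec:only-if} (that section does not rely on the present lemma). The determinant bookkeeping in the two cases is then exactly what is needed: it is precisely the absence of reflections in $H$ that prevents $\dim\mathrm{Fix}(h^{-1}g)$ from exceeding $n-2$ for a rotation $g\notin H$, which is the only way $r(g)$ could degenerate into a reflection. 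You are also right to flag the edge case $g\in H$, where $r(g)=\mathrm{id}$ is not literally a rotation; the lemma's first sentence is imprecise on this point, but as you note the ``in particular'' conclusion --- the only part used later --- is unaffected, since identity images of generators can be discarded. The only step you wave at rather than prove, namely that the orbit map of a finite group preserves the dimension of a finite union of linear subspaces, is indeed standard (simplices of an admissible triangulation map onto simplices of the same dimension), so no gap remains.
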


The PL linearization principle can be established by describing a homeomorphism $f: S^{n-1}/H \To S^{n-1}$ and a homomorphism $r: G \To \Or_n$ with kernel $H$ such that the following square commutes and such that the PL structure on $S^{n-1}$ induced by an admissible PL structure on $S^{n-1}/H$ via $f$ is admissible for the linearized action of $r(G)$ on $S^{n-1}$
\[
	\begin{xy}
		\xymatrix
		{
		   G \times S^{n-1}/H \ar[r] \ar[d]_{r\times f}  & S^{n-1}/H  \ar[d]_{f}    \\
		   r(G) \ar[r] \times S^{n-1} & S^{n-1}   
		}
	\end{xy}
\]
In fact, in this case we can take the open cone at each site and extend the PL homeomorphism $f$ linearly to a PL homeomorphism $F: \R^{n}/H \To \R^{n}$ which makes the first diagram above commute.


\subsection{Uniqueness of compatible PL structures.}
\label{sub:approx}

A map $g:P \To M$ from a polyhedron $P$ to a smooth manifold $M$ is called \emph{piecewise differentiable} or \emph{PD}, if $P$ admits a triangulation such that the restriction of $f$ to each simplex in this triangulation is smooth. It is called \emph{PD homeomorphism}, if it is in addition a homeomorphism and its restriction to each simplex has injective differential at each point. If $g$ is a PD homeomorphism, then, due to a theorem by Whitehead \cite{Whitehead}, the polyhedron $P$ is a PL manifold. Moreover, such a polyhedron exists and is unique up to PL homeomorphisms. According to a result of Illman these statements also hold equivariantly for a finite group acting smoothly on $M$ \cite{MR0500993}. In particular, if $M$ is a smooth manifold on which a finite group $G$ acts smoothly and $g_i:P_i \To M$, $i=1,2$, are two PD homeomorphisms such that the induced actions of $G$ on the polyhedra $P_1$ and $P_2$ are PL, then there exists a $G$-equivariant PL homeomorphism between $P_1$ and $P_2$.

Assume we have a PL $(n-1)$-sphere $P_1$ on which a finite group $G$ acts by PL homeomorphisms, a PD homeomorphism $g: P_1 \To S^{n-1}$ and a group homomorphism $r:G\To \Or_n$ such that the following diagram commutes
\[
	\begin{xy}
		\xymatrix
		{
		   G \times P_1 \ar[r] \ar[d]_{r\times g}  & P_1  \ar[d]_{g}    \\
		   r(G) \ar[r] \times S^{n-1} & S^{n-1}   
		}
	\end{xy}
\]
Then, by Section \ref{sub:pl_quotients} there exists an admissible triangulation of $S^{n-1}$ by a polyhedron $P_2$ with respect to the action of $r(G)$ and this triangulation defines a PD homeomorphism from the polyhedron to $S^{n-1}$. Therefore $P_1$ and $P_2$ are $G$-equivariantly PL homeomorphic as explained above. In other words, we can replace $g$ by another $G$-equivariant homeomorphism $f$ which is in addition piecewise linear with respect to an admissible PL structure for the action of $r(G)$ on $S^{n-1}$.

\subsection{Generalized Poincar\'e conjecture}
\label{sub:poincare}

The generalized Poincar\'e conjecture holds in the following version. 
Note that a closed simply connected topological manifold $M$ with $H_*(M;\Z)=H_*(S^n;\Z)$ is homotopy equivalent to an $n$-sphere (cf. \cite[Thm.~7.5.9, p.~399]{MR0210112} and \cite[Thm.~4.5, p.~346]{MR1867354}).

\begin{thm}\label{thm:poincare_pl_manifold}
For $n\neq 4$ a closed simply connected PL manifold $M$ with $H_*(M;\Z)=H_*(S^n;\Z)$ is PL homeomorphic to a standard PL $n$-sphere.
\end{thm}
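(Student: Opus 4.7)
The plan is to treat the dimensional cases separately, since the generalized Poincar\'e conjecture uses rather different techniques in low and high dimensions. By hypothesis and the remark immediately preceding the statement, $M$ is a simply connected closed manifold with the integral homology of $S^n$, hence (via Hurewicz and Whitehead) a homotopy $n$-sphere; the task is to upgrade this homotopy equivalence to a PL homeomorphism with the standard PL $n$-sphere $\partial \Delta^{n+1}$.

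For $n \leq 2$ the classification of closed $1$- and $2$-manifolds together with the uniqueness of PL structures in these dimensions settles the statement immediately. For $n = 3$ I would invoke Moise's theorem, which states that every topological $3$-manifold carries a unique PL structure compatible with its topology. Thus it suffices to produce a homeomorphism $M \cong S^3$, which is the classical Poincar\'e conjecture proved by Perelman via Ricci flow.

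For $n \geq 5$ I would appeal to the PL generalized Poincar\'e conjecture of Smale, Stallings and Zeeman. A concrete implementation proceeds as follows. Remove the interior of a PL $n$-ball $D_1 \subset M$ to obtain a compact PL manifold $M_0$ with boundary a PL $(n-1)$-sphere. A direct homology and van Kampen computation shows that $M_0$ is simply connected and contractible. Removing the interior of a second disjoint PL ball $D_2$ gives a compact, simply connected PL manifold $W$ whose boundary has two components, each a PL $(n-1)$-sphere, and one checks from the Mayer--Vietoris sequence that both boundary inclusions are homology equivalences; thus $W$ is an h-cobordism. Since $n - 1 \geq 4$, Smale's PL h-cobordism theorem yields a PL homeomorphism $W \cong S^{n-1} \times [0,1]$, whence $M_0$ is a PL $n$-ball. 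Writing $M = M_0 \cup D_1$, glued along a PL $(n-1)$-sphere, identifies $M$ PL-homeomorphically with the union of two standard $n$-balls along their boundary, i.e.\ with the standard PL $n$-sphere.

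The main obstacle is not the argument itself but the legitimate invocation of the deep classical results that supply each case (Moise and Perelman for $n=3$, Smale--Stallings--Zeeman for $n \geq 5$); everything else is a routine decomposition. The exclusion of $n = 4$ is essential, since the PL (equivalently smooth) Poincar\'e conjecture in dimension four is open; in particular the h-cobordism argument breaks down because the h-cobordism theorem requires the relevant dimensions to be at least five.
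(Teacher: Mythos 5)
Your overall structure (low dimensions by classification, $n=3$ via Perelman together with uniqueness of PL structures on $3$-manifolds, high dimensions via a two-ball decomposition and the h-cobordism theorem) matches the paper's, and your $n=3$ route differs only cosmetically: the paper smooths the PL manifold via Hirsch--Mazur and uses the uniqueness part of Whitehead's theorem to pass to the smooth category before quoting Perelman, rather than citing Moise.

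There is, however, a genuine gap at $n=5$. The PL h-cobordism theorem requires the cobordism $W$ itself to have dimension at least $6$ (equivalently, boundary dimension at least $5$); the condition you state, ``since $n-1\geq 4$'', is not the hypothesis of the theorem. For a homotopy $5$-sphere your cobordism $W$ is $5$-dimensional with $4$-dimensional boundary spheres, and the h-cobordism theorem does not apply there --- handle cancellation fails in this range for essentially the same reasons that make dimension four hard. The conclusion for $n=5$ is still true, but your concrete implementation does not establish it, and merely naming Smale--Stallings--Zeeman does not repair the argument you actually give. The paper closes this case by a different route: smooth the PL $5$-manifold (Hirsch--Mazur) and reduce to the smooth category by Whitehead's uniqueness theorem; then invoke the Kervaire--Milnor fact that every closed smooth manifold homotopy equivalent to $S^5$ bounds a compact contractible smooth $6$-manifold, delete a ball from the interior of that $6$-manifold to obtain a $6$-dimensional h-cobordism between $M$ and $S^5$, and apply the h-cobordism theorem in dimension six. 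You should either supply an argument of this kind for $n=5$ or restrict your h-cobordism implementation to $n\geq 6$ and treat $n=5$ separately.
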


For $n=1,2$ the statement has long be known by the classification of manifolds in that dimensions. For $n\geq 6$ it follows from the PL h-cobordism theorem (c.f. \cite[Thm.~A, p.~17]{MR0350744}). For $n\leq 5$ every PL manifold admits a compatible smooth structure \cite{MR0415630} and thus, according to the uniqueness part of Whitehead's theorem \cite{Whitehead,MR0198479}, the statement can be reduced to the respective statement in the smooth category. For $n=5$ the smooth version of the generalized Poincar\'e conjecture follows from the smooth h-cobordism theorem combined with the fact that every closed, smooth $5$-manifold homotopy equivalent to $S^5$ bounds a smooth, compact, contractible $6$-manifold \cite{MR0148075} (cf. \cite{MR0190942} for more details). Finally, for $n=3$ the smooth Poincar\'e conjecture follows from Perelman's work \cite{Per02,Per03a,Per03b} (cf. \cite{MorganTian,KleLo06}).

We would like to apply the statement of Theorem \ref{thm:poincare_pl_manifold} in the following situation. For a rotation group $G<\SOr_n$ the quotient $S^{n-1}/G$ is simply connected unless $n\leq 2$ by the following lemma (cf. \cite{Armstrong}, where the statement is proven in greater generality).

\begin{lem} \label{lem:simply_con_quotient} Let $G<\SOr_{n}$ with $n\geq 3$ be a finite subgroup generated by elements that fix some point in $S^{n-1}$. Then the quotient space $S^{n-1}/G$ is simply connected. 
\end{lem}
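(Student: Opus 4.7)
The plan is to invoke Armstrong's theorem on fundamental groups of orbit spaces: for a finite group $G$ acting on a simply connected, locally path-connected Hausdorff space $X$, one has $\pi_1(X/G)\cong G/N$, where $N\triangleleft G$ is the normal subgroup generated by those elements of $G$ which fix at least one point of $X$. This is exactly what is proved in \cite{Armstrong}, so my argument amounts to verifying that the hypotheses apply and then quoting the theorem.

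Taking $X=S^{n-1}$, the assumption $n\geq 3$ guarantees that $X$ is simply connected ($X$ is of course also locally path-connected and Hausdorff). By hypothesis, every generator of $G$ has a fixed point in $S^{n-1}$ and thus lies in $N$; hence $N=G$, and Armstrong's theorem yields $\pi_1(S^{n-1}/G)\cong G/G=1$, as claimed.

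Since the whole argument rests on the cited theorem, the only real obstacle is feeling comfortable invoking it. Should one prefer a self-contained proof in the PL category of this paper, I would choose an admissible equivariant triangulation $K$ of $S^{n-1}$ as in Section~\ref{sub:ad_triangulations} and compute $\pi_1(S^{n-1}/G)$ from the $2$-skeleton of $K/G$ via edge paths. A loop in $K/G$ lifts to an edge path in $S^{n-1}$ whose endpoints differ by a well-defined element of $G$, the latter being well-defined because $S^{n-1}$ is simply connected, and this lifting procedure realizes a surjection of $\pi_1(S^{n-1}/G)$ onto an appropriate subquotient of $G$. The relations coming from $2$-simplices, together with the fact that the star of each vertex in $S^{n-1}/G$ is a cone, kill precisely the loops whose lifts land in an isotropy subgroup $G_v$, and hence kill $N$. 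The combinatorially delicate point is verifying that these relations cut $\pi_1$ down to exactly $G/N$, which is the essential content of Armstrong's argument and is where the regularity condition on the $G$-action on $K$ from Section~\ref{sub:ad_triangulations} comes into play.
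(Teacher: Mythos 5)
Your proposal is correct and matches the paper's own treatment: the paper proves this lemma precisely by citing Armstrong's theorem on $\pi_1(X/G)\cong G/N$, where $N$ is the normal subgroup generated by elements with fixed points, and noting that the hypothesis forces $N=G$. Your verification of Armstrong's hypotheses for $X=S^{n-1}$, $n\geq 3$, is exactly what is needed, and the sketched self-contained edge-path argument is a reasonable account of what Armstrong's proof does, though the paper does not attempt to reproduce it.
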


Suppose $S^{n-1}$ and $S^{n-1}/G$ are equipped with PL structures that are admissible for the action of $G$ (cf. Section \ref{sub:ad_triangulations}). According to Lemma \ref{lem:PL_isotropy_condition}, in order to show that $S^{n-1}/G$ is a closed PL manifold it suffices to check that for any point $p \in S^{n-1}$ the isotropy group $G_p$ is a rotation group acting in $T_p S^{n-1}=\R^{n-1}$ such that the quotient space $\R^{n-1}/G_p$ is a PL manifold. The condition on the homology groups of $S^{n-1}/G$ can be verified as follows. 

\begin{lem} \label{lem:homology_condition} Let a finite subgroup $G<\SOr_{n}$ and some $i \in \Z_{\geq 0}$ be given. If there are subgroups $H$ of $G$ with coprime indices and $H_i(S^{n-1}/H;\Z)=0$, then  $H_i(S^{n-1}/G;\Z)=0$. In particular, the conclusion holds if the subgroups have coprime indices and the quotients $\R^n/H$ are homeomorphic to $\R^n$.
\end{lem}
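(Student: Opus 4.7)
The plan is to exhibit, for each subgroup $H\leq G$, a transfer homomorphism $\tau_H\colon H_i(S^{n-1}/G;\Z)\to H_i(S^{n-1}/H;\Z)$ compatible with the natural projection $\pi_H\colon S^{n-1}/H\to S^{n-1}/G$ in the sense that $(\pi_H)_*\circ\tau_H=[G:H]\cdot\mathrm{id}$. Granted this, the conclusion is essentially formal: if $H_1,\dots,H_k\leq G$ have coprime indices and $H_i(S^{n-1}/H_j;\Z)=0$ for every $j$, then every class $x\in H_i(S^{n-1}/G;\Z)$ satisfies $[G:H_j]\cdot x=(\pi_{H_j})_*\tau_{H_j}(x)=0$, and B\'ezout's identity $\sum_j a_j[G:H_j]=1$ forces $x=0$.

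I would construct $\tau_H$ at the chain level using an admissible $G$-equivariant triangulation $K$ of $S^{n-1}$ provided by Lemma \ref{lem:common_subdivision}, for which $K/G$ and $K/H$ are simplicial complexes. For a simplex $\sigma$ of $K/G$, pick a lift $\tilde\sigma\in K$ and representatives $g_1,\dots,g_{[G:H]}$ of the left cosets $G/H$, and set $\tau_H(\sigma)=\sum_{i=1}^{[G:H]}r(g_i\tilde\sigma)$, where $r\colon K\to K/H$ is the quotient. Re-indexing through the double coset decomposition $G=\bigsqcup_i Hg_iG_{\tilde\sigma}$ rewrites the sum as $\sum_{\bar\tau\in\pi_H^{-1}(\sigma)}c(\bar\tau)\,\bar\tau$, where $c(\bar\tau)$ counts the cosets in $G/H$ projecting to $\bar\tau$; since both $\pi_H^{-1}(\sigma)$ and the multiplicities $c(\bar\tau)$ depend only on $\sigma$, the chain $\tau_H(\sigma)$ is well defined and independent of the choices made. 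It commutes with face operators because $G$ acts simplicially, and $\pi_H\circ\tau_H(\sigma)=[G:H]\,\sigma$ is immediate from the definition, so passing to homology yields the desired transfer.

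For the last sentence of the lemma, any homeomorphism $\R^n/H\cong\R^n$ restricts to a homeomorphism of the complements of the image of $0$ and a point of $\R^n$, and the latter complement is homotopy equivalent to $S^{n-1}$; on the other hand the same punctured space deformation retracts radially onto $S^{n-1}/H$, so $S^{n-1}/H$ has the integral homology of $S^{n-1}$ and in particular $H_i(S^{n-1}/H;\Z)=0$ for $0<i<n-1$, verifying the hypothesis of the first part in all relevant degrees. The main delicate point of the whole argument is the well-definedness of $\tau_H$ in the presence of simplices with nontrivial $G$-isotropy; this is precisely what the regularity of the admissible triangulation is designed to control, and once that bookkeeping is done everything else --- the chain-map property, the identity $(\pi_H)_*\circ\tau_H=[G:H]\cdot\mathrm{id}$, and the coprime-index cancellation --- is formal.
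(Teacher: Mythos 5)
Your argument is correct and follows essentially the same route as the paper: the paper likewise fixes an admissible (regular) triangulation, invokes Bredon's simplicial transfer $\mu_{G/H}$ satisfying $(\pi_{G/H})_*\circ(\mu_{G/H})_*=[G:H]\cdot\mathrm{id}$, concludes by coprimality of the indices, and deduces the final clause from $H_*(S^{n-1}/H)=H_*(S^{n-1})$. The only difference is that you construct the transfer by hand rather than citing \cite{MR0413144} (note the sum should be indexed by right cosets $Hg$, so that $r(g\tilde\sigma)$ is independent of the chosen representative --- as your double-coset bookkeeping already implicitly assumes).
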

\begin{proof}
We choose an admissible triangulation $K$ of $S^{n-1}$ for the action of $G$ and work with simplicial homology over $\Z$ (in the following the coefficient ring is omitted and understood to be $\Z$). Let $H<G$ be a subgroup with $H_i(S^{n-1}/H)=0$. For an $i$-cycle $c\in Z_i(K/G)$ there exists an $i$-cycle $c' \in  Z_i(K/H)$ such that $\pi(c')= [G:H] \cdot c$ where $\pi_{G/H}:K/H \To K/G$ is the natural simplicial projection (e.g. $c'=\mu_{G/H}(c)$ in the notation of \cite[pp.~118-121]{MR0413144}). The $(i+1)$-chain $a\in C_{i+1}(K/H)$ with $\partial a= c'$ satisfies $\partial \pi(a)= \pi(\partial a)= [G:H] \cdot c$ and thus $0=(\pi_{G/H})_*(\mu_{G/H})_*([c])=[G:H] \cdot [c]$ in $H_i(S^{n-1}/G)$ (the induced map $(\mu_{G/H})_*: H_*(K/G)\To H_*(K/H)$ is called \emph{transfer}, cf. \cite[Ch. III. 2., pp.~118-121]{MR0413144}). Hence, if there are subgroups $H$ of $G$ with coprime indices and $H_i(S^{n-1}/H)=0$, then it follows that $H_i(S^{n-1}/G)=0$. The assumption on $H_i(S^{n-1}/H)$ is in particular fulfilled, if $\R^n/H$ is homeomorphic to $\R^n$, since this implies $H_*(S^{n-1}/H)=H_*(S^{n-1})$ (cf. \cite[p.~117]{MR1867354}).
\end{proof}

\subsection{Chevalley's theorem}\label{sub:alg_inv}

For a unitary reflection group $G<\mathrm{U}_n$ the following theorem due to Chevalley holds \cite[Thm. 3.20, p.~48]{MR2542964}.

\begin{thm} The algebra of invariants of a finite unitary reflection group $G <\mathrm{U}_n$ is generated by $n$ algebraically independent homogenous polynomials.
\end{thm}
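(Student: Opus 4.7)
The plan is to follow Chevalley's classical argument. Set $R=\Co[x_1,\dots,x_n]$, let $R^G\subset R$ denote the invariant subring, and let $R^G_+$ be its ideal of positive-degree elements. The Reynolds operator $\rho(f)=\tfrac{1}{|G|}\sum_{g\in G}gf$ is a $\Co$-linear projection from $R$ onto $R^G$. First I would choose, via Hilbert's basis theorem, a minimal homogeneous set of generators $f_1,\dots,f_m\in R^G_+$ for the ideal $I=R\cdot R^G_+$ of $R$ (applying $\rho$ to an initial generating set if necessary). A standard averaging argument then shows that the $f_i$ already generate $R^G$ as a $\Co$-algebra: given a homogeneous $h\in R^G_+$, write $h=\sum p_if_i$ with $p_i\in R$, apply $\rho$ to obtain $h=\sum\rho(p_i)f_i$ with $\rho(p_i)\in R^G$ of strictly smaller degree than $h$, and induct on degree.

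The crux is establishing that the $f_i$ are algebraically independent, and this is where the reflection hypothesis enters through the following key lemma: if $g_1,\dots,g_m\in R$ are homogeneous with $\sum g_if_i=0$ and $g_1\notin(g_2,\dots,g_m)R$, then $g_1\in I$. I would prove the lemma by induction on $\deg g_1$, exploiting that for any reflection $s\in G$ fixing the hyperplane $\{L_s=0\}$ and any polynomial $q\in R$, the difference $sq-q$ vanishes on that hyperplane and is therefore divisible by $L_s$ in $R$. Applying $s$ to the syzygy and using $sf_i=f_i$ gives $\sum(sg_i-g_i)f_i=0$; dividing by $L_s$ yields a syzygy of strictly lower degree, and the inductive hypothesis gives $(sg_1-g_1)/L_s\in I$, whence $g_1\equiv sg_1\pmod I$. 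Since the reflections generate $G$, we obtain $g_1\equiv\rho(g_1)\pmod I$, and $\rho(g_1)\in R^G_+\subset I$ closes the induction.

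The hardest step is leveraging this lemma to preclude algebraic relations. Suppose for contradiction that a nontrivial weighted-homogeneous $p(y_1,\dots,y_m)$ satisfies $p(f_1,\dots,f_m)=0$, and choose such a $p$ of minimal total weighted degree for the weighting $\deg y_i=\deg f_i$. With $p_i=\partial p/\partial y_i$, minimality ensures each nonzero $p_i(f)$ is a nonzero invariant, and the chain rule yields $\sum_ip_i(f)\,(\partial f_i/\partial x_\ell)=0$ for every $\ell$. After reordering I would extract a minimal subset $p_1(f),\dots,p_k(f)$ generating the ideal $(p_1(f),\dots,p_m(f))R$ in $R$ and write each remaining $p_j(f)=\sum_{i\leq k}a_{ji}p_i(f)$ with $a_{ji}\in R^G$ (averaging via $\rho$, using $\rho(a\cdot p_i(f))=\rho(a)\,p_i(f)$ since $p_i(f)\in R^G$). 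Substituting gives $\sum_{i\leq k}p_i(f)\bigl(\partial f_i/\partial x_\ell+\sum_{j>k}a_{ji}\,\partial f_j/\partial x_\ell\bigr)=0$, so the reflection lemma places the bracketed expression for $i=1$, call it $u_\ell$, in $I$. Since $u_\ell$ is homogeneous of degree $\deg f_1-1$, writing $u_\ell=\sum_ic_{i\ell}f_i$ forces $c_{1\ell}=0$ on degree grounds, so $u_\ell\in(f_2,\dots,f_m)R$. Multiplying by $x_\ell$, summing, and invoking Euler's relation $(\deg f_1)f_1=\sum_\ell x_\ell\,\partial f_1/\partial x_\ell$ then yields $(\deg f_1)f_1\in(f_2,\dots,f_m)R$, contradicting the minimality of the generating set. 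Hence the $f_i$ are algebraically independent, and since $R$ is finite over $R^G$ (each $x_j$ is a root of $\prod_{g\in G}(T-gx_j)\in R^G[T]$), $\mathrm{trdeg}_\Co R^G=n$ forces $m=n$.
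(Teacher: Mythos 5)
The paper offers no proof of this statement --- it is quoted directly from Lehrer--Taylor (Thm.~3.20 of \cite{MR2542964}) --- so any complete argument would do, and your reconstruction of Chevalley's classical proof is the right route. Most of it is sound, but your key lemma is misstated in a way that breaks its own proof. You place the hypothesis on the \emph{non-invariant} coefficients: $g_1\notin(g_2,\dots,g_m)R$. In the inductive step you pass from $\sum g_if_i=0$ to $\sum\bigl((sg_i-g_i)/L_s\bigr)f_i=0$ and invoke the inductive hypothesis on the new relation, but nothing guarantees that $(sg_1-g_1)/L_s$ avoids the ideal generated by the other quotients $(sg_i-g_i)/L_s$; the hypothesis is not preserved under the reduction, so the induction does not close. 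The correct formulation (Chevalley's, as in Humphreys or Bourbaki) puts the non-membership condition on the \emph{invariant} coefficients: if $h_1,\dots,h_m\in R^G$ with $h_1$ not in the ideal of $R^G$ generated by $h_2,\dots,h_m$, and $\sum g_ih_i=0$ with all $g_i$ homogeneous, then $g_1\in I$. With that hypothesis the induction is immediate, because the $h_i$ are untouched when you divide the differences $sg_i-g_i$ by $L_s$; the hypothesis is needed only in the degree-zero base case, where a nonzero constant $g_1$ would, after applying $\rho$ to the relation, exhibit $h_1$ as an $R^G$-combination of the others.

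Tellingly, your application already relies on the correct version: in the syzygy $\sum_{i\le k}p_i(f)\,u_{i\ell}=0$ the condition you actually verify is that $p_1(f)$ does not lie in the ideal generated by $p_2(f),\dots,p_k(f)$ (via minimality of the chosen subset), i.e.\ a condition on the invariant coefficients, not on the $u_{i\ell}$ --- so your stated lemma does not even apply to the situation you use it in. The repair is localized: restate and reprove the lemma with the hypothesis on the invariant coefficients. The rest of your argument --- divisibility of $sq-q$ by $L_s$ (valid for unitary reflections of any order), the degree count forcing $c_{1\ell}=0$, the Euler-relation contradiction with minimality of the $f_i$, and the transcendence-degree argument giving $m=n$ --- is correct as written.
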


For $n$ such generators $f_1,\ldots,f_n$ of $\Co[z_1,\ldots,z_n]^G$ we will see in Section \ref{sub:unit_reflection_groups_hom} that the map
\[
\begin{array}{lccc}
   f=[f_1,\ldots,f_n]: 	& \Co^n & \longrightarrow  				& \Co^n \\
   					& v		& \longmapsto &  (f_1(v),\ldots,f_n(v))
\end{array}
\]
descends to a homeomorphism $\overline{f}: \Co^n/G \To \Co^n$ and, moreover, that $\R^{2n}/G$ is in fact PL homeomorphic to $\R^{2n}$.

\subsection{The fundamental domain of a group}\label{sub:fundamental_domain}
For a finite subgroup $G < \Or_n$ there exists a vector $v_0 \in \R^n$ such that $gv_0 \neq v_0$ for all $g \in G\backslash \{e\}$. For such a vector $v_0$ the set
\[
		\Lambda = \bigcap_{g\in G} \{v\in \R^n| (v,v_0)\geq (v,gv_0)\}
\]
is a fundamental domain for the group $G$, i.e. the $G$-translates of $\Lambda$ cover $\R^n$ and the union $\bigcup_{g\in G} g \mathring{\Lambda}$ is disjoint. It inherits a subspace topology and PL structure from $\R^n$ and the quotient space $\R^n/G$ with its quotient topology and PL structure (cf. Sections \ref{sub:pl_quotients} and \ref{sub:ad_triangulations}) can be obtained from $\Lambda$ by identifying certain points on the boundary, namely those which belong to the same orbit of $G$.

\subsection{Gluing construction}

We need the following elementary gluing result for PL balls. The lemma states that ``twisted'' PL spheres are standard PL spheres. Its proof is a direct consequence from the fact that a PL homeomorphism $f: \partial \Delta^n \To \partial \Delta^n$ can be linearly extended in a radial direction to a PL homeomorphism $\overline{f}:\Delta^n\To \Delta^n$ \cite[Lem.~1.10, p.~8]{MR0350744}.
\begin{lem} \label{lem:gluing_pl_balls}
Suppose $B_1^n$ and $B_2^n$ are PL balls and $\varphi: \partial B_1^n \To \partial B_2^n$ is a PL homeomorphism. Then the space $B_1^n \cup_{\varphi} B_2^n$ obtained by gluing $B_1^n$ and $B_2^n$ together along their boundary via $\varphi$ is a PL $n$-sphere.
\end{lem}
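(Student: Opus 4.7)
\begin{prf} My plan is to reduce the gluing $B_1^n\cup_\varphi B_2^n$ to a ``standard'' gluing of two copies of $\Delta^n$ along the identity of $\partial\Delta^n$, and then identify this standard double with $\partial\Delta^{n+1}$.

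Since $B_1^n$ and $B_2^n$ are PL balls, I choose PL homeomorphisms $\alpha_i:\Delta^n\to B_i^n$ for $i=1,2$, which restrict to PL homeomorphisms $\alpha_i|_{\partial\Delta^n}:\partial\Delta^n\to\partial B_i^n$. The map
\[
h=\alpha_2^{-1}\circ\varphi\circ\alpha_1|_{\partial\Delta^n}\colon\partial\Delta^n\LTo\partial\Delta^n
\]
is then a PL homeomorphism. By the quoted fact \cite[Lem.~1.10, p.~8]{MR0350744}, $h$ extends radially to a PL homeomorphism $\overline{h}:\Delta^n\to\Delta^n$.

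Next I define a map $F:\Delta^n\sqcup\Delta^n\to B_1^n\cup_\varphi B_2^n$ by $\alpha_1$ on the first copy and by $\alpha_2\circ\overline{h}$ on the second copy. For $x\in\partial\Delta^n$ one has $\alpha_2\circ\overline{h}(x)=\alpha_2\circ h(x)=\varphi\circ\alpha_1(x)$, so $F$ factors through the gluing $\Delta^n\cup_{\mathrm{id}}\Delta^n$ of the two copies of $\Delta^n$ along the identity of their common boundary. The resulting map is a bijection and, being piecewise a PL homeomorphism on each half, is a PL homeomorphism onto $B_1^n\cup_\varphi B_2^n$ (using that a map defined piecewise on a finite union of compact polyhedra is PL whenever its restrictions are, as recalled in the proof of Lemma \ref{lem:universal_prop}).

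It remains to recognize $\Delta^n\cup_{\mathrm{id}}\Delta^n$ as a standard PL $n$-sphere. I realize this by the upper and lower hemisphere decomposition of $\partial\Delta^{n+1}$: fix any interior point $p$ of a codimension-one face, cut $\partial\Delta^{n+1}$ along $\partial\Delta^n$ suitably, or more concretely use that $\partial\Delta^{n+1}$ triangulates as the union of the closed star of a vertex and its complementary ball, glued along their common boundary $\partial\Delta^n$ by the identity after an obvious PL identification. Composing the resulting PL homeomorphism $\partial\Delta^{n+1}\to\Delta^n\cup_{\mathrm{id}}\Delta^n$ with $F$ yields a PL homeomorphism $\partial\Delta^{n+1}\to B_1^n\cup_\varphi B_2^n$, as required. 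The only non-trivial ingredient is the radial extension lemma, which is already cited; everything else is bookkeeping about piecewise assembly of PL maps.
\end{prf}
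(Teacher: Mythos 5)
Your proof is correct and uses exactly the key ingredient the paper relies on, namely the radial (cone) extension of a boundary PL homeomorphism \cite[Lem.~1.10, p.~8]{MR0350744}; the paper simply declares the lemma a direct consequence of that fact, and your argument is the expected bookkeeping (normalizing the gluing map to the identity and recognizing the standard double as $\partial\Delta^{n+1}$ via the star of a vertex and its opposite facet) written out in full.
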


\subsection{Collapsing} \label{sub:collapse}
Let $K$ be a simplicial complex and let $\sigma, \tau \in K$ be simplices such that
\begin{compactenum}
\item $\tau < \sigma$, i.e. $\tau$ is a proper face of $\sigma$,
\item $\sigma$ is a maximal simplex in $K$ and $\tau$ is not contained in any other maximal simplex of $K$,
\end{compactenum}
then $\tau$ is called a \emph{free face} of $K$. A \emph{simplicial collaps} of $K$ defined by $\tau < \sigma$ as above is the removal of all simplices $\rho$ of $K$ with $\tau \leq \rho \leq \sigma$. We say that $K$ collapses onto a subcomplex $L$ of $K$ if there exists a finite sequence of collapses leading from $K$ to $L$. A simplicial complex that collapses onto a point is called \emph{collapsible}. Being collapsible to a subcomplex is a PL property, i.e. it does not depend on a specific triangulation (cf. \cite[p.~39]{MR0350744}). In our proof we will apply the following characterization (cf. \cite[Cor.~3.28, p.~41]{MR0350744})

\begin{lem}\label{lem:collapse} A collapsible PL $n$-manifold (with or without boundary) is a PL $n$-ball.
\end{lem}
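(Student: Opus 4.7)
The plan is to apply the regular neighborhood theorem: since $M$ collapses onto a single point, the abstract PL homeomorphism type of $M$ is that of a regular neighborhood of a point in a PL $n$-manifold, and such a regular neighborhood is a PL $n$-ball.

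Specifically, recall from \cite[Ch.~3]{MR0350744} that a compact codimension-zero PL submanifold of a PL manifold which collapses to a compact subpolyhedron $L$ is PL homeomorphic to any \emph{regular neighborhood} of $L$; hence its PL homeomorphism type is determined by $L$ alone. A prototypical regular neighborhood of a vertex $p$ is the closed star $\overline{\mathrm{st}}(p, K'')$ in the second barycentric subdivision $K''$ of any triangulation of the ambient manifold. This star is the join of $\{p\}$ with the link of $p$; since the link is a PL $(n-1)$-sphere when $p$ lies in the interior and a PL $(n-1)$-ball when $p$ lies on the boundary, and since the join of a point with a PL $k$-sphere (respectively PL $k$-ball) is a PL $(k+1)$-ball, the star is in either case a PL $n$-ball.

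For the given $M$: if $n = 0$, $M$ is a point and hence a PL $0$-ball. If $n \geq 1$, then $\partial M \neq \emptyset$, since collapsibility forces contractibility while a closed connected PL $n$-manifold of positive dimension has $H_n(M;\Z) \neq 0$. By hypothesis there is a triangulation $K$ of $M$ together with a sequence of elementary collapses reducing $K$ to a single vertex $p$. The regular neighborhood theorem then identifies $M$ with $\overline{\mathrm{st}}(p, K'')$, which by the previous paragraph is a PL $n$-ball. The principal technical input is the regular neighborhood theorem itself, which is the substantive content of piecewise-linear topology needed here and is imported wholesale from \cite[Ch.~3]{MR0350744}; the only subtlety beyond this is the need to interpret the definitions correctly when the terminal vertex $p$ of the collapse happens to lie on $\partial M$, but the star-over-link description works uniformly in this case as well.
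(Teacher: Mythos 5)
Your proposal is correct and is essentially the argument behind the paper's own treatment, which consists of citing Rourke--Sanderson \cite[Cor.~3.28, p.~41]{MR0350744}: there the lemma is deduced exactly as you do, from the fact that a compact PL manifold collapsing to a subpolyhedron is a regular neighbourhood of it, applied to the terminal vertex, whose regular neighbourhood is a star, i.e.\ a cone on a PL sphere or ball and hence a PL ball. One small blemish: your claim that a closed connected PL $n$-manifold of positive dimension has $H_n(M;\Z)\neq 0$ is false for non-orientable manifolds, though the step survives because a contractible manifold is simply connected and hence orientable (and in any case the detour through $\partial M\neq\emptyset$ is unnecessary, since identifying $M$ with a regular neighbourhood of a point already forces it to be a ball).
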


In order to be able to apply this characterization, we will need the following lemma. 

\begin{lem}\label{lem:induced_collapse} Let $p: K \To \tilde{K}$ be a simplicial surjection between finite simplicial complexes $K$ and $\tilde{K}$ that maps simplices of $K$ homeomorphically onto simplices of $\tilde{K}$. Suppose further that $L$ is a subcomplex of $K$ such that $p$ restricts to a bijection $p: K \backslash L \To \tilde{K} \backslash p(L)$. If $K$ collapses onto $L$, then $\tilde{K}$ collapses onto $p(L)$
\end{lem}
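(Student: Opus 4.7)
The plan is to induct on the length $m$ of a fixed collapse sequence $K = K_0 \searrow K_1 \searrow \cdots \searrow K_m = L$. The base case $m=0$ is trivial, since then $\tilde K = p(K) = p(L)$. For the inductive step it suffices to prove that if one elementary collapse takes $K_0$ to $K_1$ via a free face pair $\tau < \sigma$, then there is a corresponding elementary collapse of $\tilde K$ whose result equals $p(K_1)$; the restricted map $p\colon K_1 \to p(K_1)$ will then satisfy the same hypotheses as the original $p$, so the inductive hypothesis applied to the shorter sequence $K_1 \searrow \cdots \searrow L$ completes the argument.

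Since the removed simplices $\{\rho \mid \tau \le \rho \le \sigma\}$ must all lie in $K \setminus L$ (otherwise $L$ would not be contained in $K_1$), the images $p(\tau), p(\sigma) \in \tilde K \setminus p(L)$ are well defined, and $p(\tau)$ is a proper face of $p(\sigma)$ because $p|_\sigma$ is a simplicial isomorphism onto $p(\sigma)$. I would then verify that $p(\tau) < p(\sigma)$ is again a free face pair in $\tilde K$. For maximality of $p(\sigma)$, suppose $p(\sigma) < \sigma'$. The case $\sigma' \in p(L)$ is ruled out because then $p(\sigma) \in p(L)$ would contradict the bijection on simplices outside $L$. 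Otherwise $\sigma' = p(\tilde\sigma)$ for a unique $\tilde\sigma \in K \setminus L$, and lifting $p(\sigma)$ through the simplex-wise isomorphism $p|_{\tilde\sigma}$ produces a face $\tau'$ of $\tilde\sigma$ with $p(\tau') = p(\sigma)$; the bijection then forces $\tau' = \sigma$, so $\sigma < \tilde\sigma$, contradicting maximality of $\sigma$ in $K$. The same style of argument shows $p(\tau)$ is contained in no maximal simplex of $\tilde K$ other than $p(\sigma)$: any lift $\tilde\sigma$ of such a maximal simplex must contain $\tau$ and must itself be maximal in $K$ (since $p|_{\tilde\sigma}$ sends proper faces to proper faces, maximality descends from $\tilde K$ to $K$), hence must equal $\sigma$ by freeness of $\tau$.

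It remains to identify the resulting complex. The elementary collapse of $\tilde K$ along $p(\tau) < p(\sigma)$ removes exactly the simplices $\{p(\rho) \mid \tau \le \rho \le \sigma\}$, which are pairwise distinct in $\tilde K$ because $p|_\sigma$ is a bijection onto $p(\sigma)$. On the other hand, no simplex $\rho' \in K_1$ can satisfy $p(\rho') = p(\rho)$ for $\rho$ in the collapsed set: if $\rho' \in L$ then $p(\rho') \in p(L)$ while $p(\rho) \notin p(L)$, and if $\rho' \notin L$ the bijection forces $\rho' = \rho$, contradicting $\rho' \in K_1$. Hence the collapsed complex equals $\tilde K \setminus \{p(\rho) \mid \tau \le \rho \le \sigma\} = p(K_1)$, as required, and the induction closes.

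The main difficulty I anticipate is not a single hard step but careful bookkeeping: at each verification one has to juggle two distinct consequences of the hypotheses, namely simplex-wise injectivity of $p$ (needed to preserve dimension and to lift faces inside a single simplex) and global bijectivity of $p$ on simplices outside $L$ (needed to make such lifts unique and compatible with the subcomplex $L$). Every individual step is elementary, but one has to keep track of precisely which hypothesis is invoked where and of which simplices do or do not lie in $p(L)$.
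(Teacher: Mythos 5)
Your proposal is correct and follows essentially the same route as the paper's proof: induct on the collapse sequence and verify that each elementary collapse along a free face pair $\tau<\sigma$ (necessarily outside $L$) descends to an elementary collapse of $\tilde K$ along $p(\tau)<p(\sigma)$ with result $p(K_1)$. You merely spell out in detail the lifting/maximality checks that the paper dismisses as clear.
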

\begin{proof} Let  $\tau < \sigma \in K$ with $\tau, \sigma \notin L$ and suppose that $\tau < \sigma$ defines a simplicial collaps of $K$. By assumption on $p$ we have $p(\tau), p(\sigma) \notin p(L)$ and thus the claim follows inductively, if we can show that $p(\tau) < p(\sigma) \in \tilde{K}$ defines a simplicial collapse of $\tilde{K}$. But again, by assumption on $p$, and because of $\tau, \sigma \notin L$ it is clear that $p(\sigma)$ is a maximal simplex of $\tilde{K}$ and that $p(\tau)$ is not contained in any other maximal simplex of $\tilde{K}$. 
\end{proof}

\section{Proof of the main result}

In this section we prove the if direction of our main result by verifying its conclusion for all reflection-rotation groups. The proof is structured as follows. For each reflection-rotation group $G$ we either prove the conclusion of our main result directly or we reduce such a proof to the respective claim on reflection-rotation groups of lower order via the PL linearization principle (cf. Appendix \ref{app:class-rr-groups} for a summary of the classification result). In doing this we will need to show that for each pair $M\triangleleft G_{rr}$ of an irreducible reflection-rotation group $G_{rr}$ that contains a reflection and a nontrivial normal rotation group $M \triangleleft G_{rr}$ such that $G_{rr}$ is generated by the reflections it contains and by $M$, there exists a nontrivial rotation group $H \triangleleft M$ normalized by $G_{rr}$ such that the PL linearization principle can be applied to the groups $H\triangleleft G_{rr}$ (cf. Section \ref{sub:red_pse_gro}). All such pairs $M\triangleleft G_{rr}$ are listed in Theorem \ref{thm:reducible_pairs}. In each case we will either show this property directly or reduce it to proving our main result for reflection-rotation groups of order less than $G_{rr}$. Once we have treated all the cases, the if direction of our main result follows by induction. References to the sections in which the respective cases are treated can be found in the appendix.

\subsection{Real reflection groups}
\label{sub:reflection_groups_hom}

The fundamental domain $\Lambda$ of a reflection group $W<\Or_n$ acting on $S^{n-1}$ is a spherical simplex \cite[Thm. 4, p.~595]{MR1503182}. Let $\Wp$ be the orientation preserving subgroup of $W$ and, if there exists some rotation $h\in \Or_n\backslash W$ that normalizes $W$, set $\Wt=\left\langle W,h\right\rangle$ and $\Ws=\left\langle \Wp,h\right\rangle$. Choose an admissible triangulation for the action of $W$ (and hence of $W^{+}$) on $S^{n-1}$ that refines the triangulation of $S^{n-1}$ by the fundamental domains of $W$. Then the quotient space $S^{n-1}/W$ is a PL ball, namely the fundamental domain $\Lambda$ of $W$, and the quotient space $S^{n-1}/\Wp$ can be obtained by gluing together two copies of $\Lambda$ along their boundary, i.e. the resulting space is a PL sphere by Lemma \ref{lem:gluing_pl_balls}. Moreover, a coset $\overline{s}$ of a reflection $s\in W$ interchanges the two copies. Therefore the PL linearization principle can be applied to the groups $\Wp\triangleleft W$. If $h$ exists, then its action on $S^{n-1}/\Wp$ commutes with the action of a reflection $s \in W$ on $S^{n-1}/\Wp$, since $h$ normalizes $W$ by assumption. In particular, the quotient space $S^{n-1}/\Ws$ can be realized as the suspension of $\partial \Lambda / \overline{h}$ whose cone points are interchanged by $\overline{s}$. Therefore, it is clear that $S^{n-1}/\Ws$ is a PL sphere, that $S^{n-1}/\Wt$ is a PL ball and that the PL linearization principle can be applied to the groups $\Wp \triangleleft \Wt$ and $\Ws \triangleleft \Wt$. Hence, we have proven

\begin{lem}\label{lem:main_refl} In the notation used above our main theorem holds for groups of type $W$, $\Wp$, $\Ws$, $\Wt$ and the PL linearization principle can be applied to $\Wp \triangleleft W$, $\Wp \triangleleft \Wt$ and $\Ws \triangleleft \Wt$. In particular, it can be applied to $\Wp(\D_n) \triangleleft W(\BC_n)$.
\end{lem}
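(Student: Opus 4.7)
The plan exploits the classical fact (cited in the preceding paragraph) that the fundamental domain $\Lambda$ of a finite Euclidean reflection group $W$ on $S^{n-1}$ is a spherical simplex. After fixing an admissible triangulation of $S^{n-1}$ refining the Coxeter complex, $\Lambda$ becomes a PL $(n-1)$-ball, so $S^{n-1}/W=\Lambda$ and $\R^n/W$ is PL homeomorphic to $\R^{n-1}\times\R_{\geq 0}$. This settles the main theorem for $W$.

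For $\Wp$, since $W/\Wp\cong\Z/2$ is generated by the coset $\overline{s}$ of a reflection $s\in W$, the quotient $S^{n-1}/\Wp$ is the double of $\Lambda$ along $\partial\Lambda$, which is a PL $(n-1)$-sphere by Lemma \ref{lem:gluing_pl_balls}. Hence $\R^n/\Wp$ is PL-$\R^n$. The PL linearization principle for $\Wp\triangleleft W$ follows by constructing a PL homeomorphism $f:S^{n-1}/\Wp\To S^{n-1}$ sending the two copies of $\Lambda$ onto the two closed hemispheres of a standard sphere and carrying $\overline{s}$ to a linear reflection across the equator; the required homomorphism $r:W\To\Or_n$ has kernel $\Wp$, and the compatibility of PL structures on the target $S^{n-1}$ is handled by the uniqueness result of Section \ref{sub:approx}.

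When the normalizing rotation $h$ exists, $\overline{h}$ acts on $S^{n-1}/W=\Lambda$ by a simplicial automorphism (permuting the vertices of $\Lambda$), so I would take the $\overline{h}$-fixed centroid of $\Lambda$ as cone point and thereby realize $S^{n-1}/\Wp\cong\Sigma(\partial\Lambda)$ with $\overline{h}$ fixing both cone points and $\overline{s}$ swapping them. Commutativity of $\overline{h}$ and $\overline{s}$ follows from $h$ normalizing $\Wp$ together with $W/\Wp$ being cyclic of order $2$, so
\[
   S^{n-1}/\Ws \;\cong\; \Sigma(\partial\Lambda)/\overline{h} \;\cong\; \Sigma(\partial\Lambda/\overline{h}),
\]
and $S^{n-1}/\Wt$ is the further quotient by the cone-point swap, giving the cone $C(\partial\Lambda/\overline{h})$. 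Provided $\partial\Lambda/\overline{h}$ is a PL $(n-2)$-sphere, $S^{n-1}/\Ws$ is a PL sphere and $S^{n-1}/\Wt$ is a PL ball; the linearization principles for $\Wp\triangleleft\Wt$ and $\Ws\triangleleft\Wt$ then follow along the same lines as before by realizing $\overline{h}$ and $\overline{s}$ as orthogonal transformations acting standardly on the suspension.

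The main obstacle is to verify that $\partial\Lambda/\overline{h}$ really is a PL $(n-2)$-sphere. I would attack this by induction on $n$: $\overline{h}$ acts simplicially on $\partial\Lambda$ by permuting the codimension-one faces of $\Lambda$, each of which corresponds to a standard parabolic subgroup of $W$ of lower rank, and a face-by-face analysis combined with the inductive hypothesis should identify $\partial\Lambda/\overline{h}$ as a PL sphere. Finally, the pair $\Wp(\D_n)\triangleleft W(\BC_n)$ is the instance of $\Wp\triangleleft\Wt$ obtained by taking $W=W(\D_n)$ and for $h$ any element of $W(\BC_n)\setminus W(\D_n)$ with determinant $+1$ (for example, the orthogonal transformation sending $e_1\mapsto e_2$, $e_2\mapsto -e_1$ and fixing the remaining standard basis vectors); such $h$ is a rotation that normalizes the index-two normal subgroup $W(\D_n)$.
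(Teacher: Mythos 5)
Your treatment of $W$, $\Wp$ and the linearization of $\Wp\triangleleft W$ matches the paper and is fine. The gap you yourself flag in the $\Ws,\Wt$ part is a real one: everything rests on $\partial\Lambda/\overline{h}$ being a PL $(n-2)$-sphere, and ``a face-by-face analysis combined with the inductive hypothesis should identify it'' is not a proof. The claim is genuinely sensitive to what $\overline{h}$ is: if $\overline{h}$ acted on $\partial\Lambda\cong S^{n-2}$ with codimension-one fixed set, the quotient would be a PL ball and the suspension would not be a sphere. A way to close the gap: since $h$ normalizes $W$, the induced map $\overline{h}$ on $\Lambda=S^{n-1}/W$ is the restriction of the orthogonal transformation $g_0=w_0^{-1}h$, where $w_0\in W$ is the unique element with $h\Lambda=w_0\Lambda$; then $g_0$ stabilizes $\Lambda$, fixes its barycenter $b$, and (pseudo)radial projection from $b$ gives a $\langle g_0\rangle$-equivariant PL homeomorphism of $\partial\Lambda$ with a round $S^{n-2}$ on which $g_0$ acts orthogonally. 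This simultaneously supplies the linearization of $\overline{h}$ you need for $\Wp\triangleleft\Wt$ and $\Ws\triangleleft\Wt$, and reduces the sphere claim to the main theorem for the cyclic group generated by $g_0$ restricted to $T_bS^{n-1}$ --- which one must still verify is a rotation group in each of the finitely many cases $\A_4,\D_4,\F_4,\A_5,\E_6$ in which $\Wt$ exists.

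A second, related problem: you assert that $\overline{h}$ fixes both cone points of $\Sigma(\partial\Lambda)$, but $\overline{h}$ fixes them if and only if $\det w_0=+1$ (it preserves the two $\Wp$-orbits of chambers exactly in that case), and this fails for your chosen $h$ in the $\Wp(\D_n)\triangleleft W(\BC_n)$ instance: there $g_0$ is the sign change in the last coordinate, $\det w_0=-1$, and $\overline{h}$ interchanges the two copies of $\Lambda$. Consequently $S^{n-1}/\Ws$ is not $\Sigma(\partial\Lambda/\overline{h})$ in that case --- which is just as well, since $\partial\Lambda/g_0$ is a PL ball there and the suspension formula would give the wrong answer. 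The conclusion for $\BC_n$ is still true, because $\Ws=\Wp(\BC_n)$ and $\Wt=W(\BC_n)$ are handled by the first half of your argument applied to the $\BC_n$-chamber, but it does not follow from your suspension description, and that description needs the hypothesis $\det w_0=1$ made explicit and checked in the cases where you invoke it.
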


\subsection{Reflection groups induced by unitary reflection groups} \label{sub:unit_reflection_groups_hom}

For a unitary reflection group $G<\mathrm{U}_n$ we choose $n$ algebraically independent homogenous generators $f_1,\ldots,f_n \in \Co[z_1,\ldots,z_n]^G$ given by Chevalley's theorem (cf. Section \ref{sub:alg_inv}). The continuous map
\[
\begin{array}{lccc}
   f=[f_1,\ldots,f_n]: 	& \Co^n & \longrightarrow  				& \Co^n \\
   					& v		& \longmapsto &  (f_1(v),\ldots,f_n(v))
\end{array}
\]
descends to a continuous map $\overline{f}:\Co^n/G \To \Co^n$. The map $\overline{f}$ is injective, since the algebra of invariants of $G$ separates its orbits \cite[Thm. 3.5, p.~41]{MR2542964}, and also onto \cite[Thm.~3.15, p.~45]{MR2542964}. Moreover, since $\Co[z_1,\ldots,z_n]$ is a finitely generated $\Co[z_1,\ldots,z_n]^G$-module \cite[Thm.~1.3, p.~478]{MR526968}, the map $f$ is a finite and hence proper morphism of complex affine varieties \cite[6.1.11, 5.5.3]{MR0163909}. Therefore, the map $f$ is also proper with respect to the usual topology in the sense of \cite[Ch.~1, $\S$10, no.~1, Def.~1]{BourbakiTop} by \cite[Ch.~XII., Prop.~3.2]{MR2017446}. In particular, the map $f$ is closed. Consequently $\overline{f}$ is a homeomorphism and thus $\R^{2n}/G$ and $\R^{2n}$ are homeomorphic where $G$ is regarded as a real rotation group.

Continuity of $\overline{f}^{-1}$ can alternatively be shown by induction as follows. According to a theorem by Steinberg isotropy groups of unitary reflection groups are again unitary reflection groups \cite[Thm.~1.5, p.~394]{MR0167535} (cf. \cite[Thm. 9.44, p.~186]{MR2542964} and \cite{MR2052515}). Hence, it follows by induction that $\Co^n/G - \{\overline{0}\}$ is a topological manifold, where $\overline{0}$ is the coset of $0\in \Co^n$ in $\Co^n/G$. Using the domain invariance theorem it is then not difficult to conclude that $\overline{f}^{-1}$ is continuous (cf. \cite{Lange_thesis} for more details).

Finally, since isotropy groups of unitary reflection groups are again unitary reflection groups by Steinbergs theorem, it follows by induction as explained in Section \ref{sub:poincare} that the PL quotient $\R^{2n}/G$ for a unitary reflection group $G$ regarded as a rotation group is PL homeomorphic to $\R^{2n}$, i.e. we have

\begin{lem}\label{lem:main_unit} Our main theorem holds for unitary reflection groups considered as real groups.
\end{lem}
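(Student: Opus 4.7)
The plan is to argue by induction on the complex dimension $n$. The base case $n=1$ is immediate: $G<\mathrm{U}_1$ is a finite cyclic group of rotations of $\mathbb{C}$, and $\mathbb{R}^2/G$ is PL homeomorphic to $\mathbb{R}^2$ via a radial extension of the obvious fundamental-domain identification.

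For the inductive step I assume the conclusion for all unitary reflection groups of complex dimension strictly less than $n$, and first show that $S^{2n-1}/G$ is a closed PL manifold via Lemma \ref{lem:PL_isotropy_condition}. Fix $p\in S^{2n-1}$. By Steinberg's theorem the isotropy $G_p<\mathrm{U}_n$ is again a unitary reflection group. Since every element of $G_p$ is $\mathbb{C}$-linear and fixes $p$, complex linearity forces it to fix $ip$ as well and to preserve the complex orthogonal complement $V\cong\mathbb{C}^{n-1}$, on which $G_p$ acts as a unitary reflection group in dimension $n-1$. Hence $T_pS^{2n-1}/G_p$ splits PL-equivariantly as $\mathbb{R}\cdot ip\;\times\;V/G_p$, and by the inductive hypothesis the second factor is PL homeomorphic to $\mathbb{R}^{2n-2}$, so $T_pS^{2n-1}/G_p$ is PL homeomorphic to $\mathbb{R}^{2n-1}$ and in particular a PL manifold without boundary. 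Lemma \ref{lem:PL_isotropy_condition} then yields that $S^{2n-1}/G$ is a closed PL manifold.

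Next I would assemble the remaining hypotheses of Theorem \ref{thm:poincare_pl_manifold}. Complex reflections fix complex hyperplanes and therefore fix points of $S^{2n-1}$, so Lemma \ref{lem:simply_con_quotient} gives simple connectivity of $S^{2n-1}/G$. The topological homeomorphism $\overline{f}:\mathbb{C}^n/G\to\mathbb{C}^n$ constructed earlier in Section \ref{sub:unit_reflection_groups_hom} from Chevalley generators, combined with the final observation in the proof of Lemma \ref{lem:homology_condition}, implies $H_\ast(S^{2n-1}/G;\mathbb{Z})=H_\ast(S^{2n-1};\mathbb{Z})$. Since $2n-1$ is always odd it is never equal to $4$, so Theorem \ref{thm:poincare_pl_manifold} applies and identifies $S^{2n-1}/G$ with a standard PL $(2n-1)$-sphere.

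Finally, because the $G$-action on $\mathbb{R}^{2n}$ is linear, $\mathbb{R}^{2n}/G$ is canonically the open cone on $S^{2n-1}/G$ equipped with an admissible PL structure (cf.\ end of Section \ref{sub:pl_structure} and Section \ref{sub:ad_triangulations}); by the cone characterization recalled there, the open cone on a standard PL sphere is PL homeomorphic to $\mathbb{R}^{2n}$, completing the induction. The only genuine subtlety I anticipate is the bookkeeping of PL structures: one must verify that the admissible PL structure on $S^{2n-1}/G$ used to invoke Theorem \ref{thm:poincare_pl_manifold} is the same as the one obtained from the cone structure of $\mathbb{R}^{2n}/G$, which is exactly what the discussion of admissible PL structures in Section \ref{sub:ad_triangulations} guarantees, and confirming the homology identity $H_\ast(S^{2n-1}/G;\mathbb{Z})=H_\ast(S^{2n-1};\mathbb{Z})$ from the topological statement about $\overline{f}$ using the contractibility of $\mathbb{C}^n$.
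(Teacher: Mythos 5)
Your proposal is correct and follows essentially the same route as the paper: the Chevalley-invariants map supplies the topological homeomorphism and hence the homology condition, Steinberg's theorem drives the inductive verification that $S^{2n-1}/G$ is a PL manifold via Lemma \ref{lem:PL_isotropy_condition}, and Theorem \ref{thm:poincare_pl_manifold} plus the cone construction finish the argument. The only difference is cosmetic: you make explicit the splitting $T_pS^{2n-1}/G_p\cong\R\times V/G_p$ at isotropy points, which the paper leaves implicit in its appeal to the induction scheme of Section \ref{sub:poincare}.
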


\subsection{Reflection-rotation groups in dimension up to four}
\label{sub:dim_four_groups_hom}

Groups up to dimension three can be easily treated by hand. For instance, all finite subgroups of $\SOr_2$ and $\SOr_3$ are orientation preserving subgroups of reflection groups which have been treated in Section \ref{sub:reflection_groups_hom}. In dimension four a classification based proof becomes rather long-winded and cumbersome (cf. \cite[\S 3]{Mikhailova}).

The following approach dispenses with the classification in dimensions up to four. The proof is based on induction on the dimension. For simplicity let us suppose that we have already treated the cases $n<4$, i.e. we formulate the proof for $n=4$. The same arguments apply in dimenions $n<4$.
All isotropy groups of a rotation group $G<\SOr_4$ are again rotation groups. Therefore, the quotient $S^3/G$ is a closed simply connected PL manifold with respect to a PL structure induced by $K/G$ where $K$ is an admissible triangulation for the action of $G$ on $S^3$ (cf. Lemma \ref{lem:PL_isotropy_condition}). Hence, by the PL version of the Poincar\'e conjecture (cf. Section \ref{sub:poincare}), the PL quotient $S^3/G$ is PL homeomorphic to the standard PL $3$-sphere and $\R^4/G$ is PL homeomorphic to $\R^4$, i.e. we have

\begin{lem}\label{lem:main_uptofour} Our main theorem holds for rotation groups in dimension up to four.
\end{lem}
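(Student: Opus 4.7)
The plan is induction on $n$. For $n\le 3$ I would invoke the assertion, used freely in the paper, that every finite subgroup of $\SOr_n$ is the orientation-preserving part $\Wp$ of some real reflection group $W<\Or_n$ (in dimension $2$ the cyclic groups $\cyc_k=\dih_k^{+}$, and in dimension $3$ the classical list $\cyc_k,\dih_k,\alt_4,\sym_4,\alt_5$); the conclusion then follows at once from Lemma~\ref{lem:main_refl}. It therefore suffices to carry out the inductive step $n=4$.

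For $n=4$, fix a rotation group $G<\SOr_4$ and an admissible triangulation $K$ of $S^3$ for the $G$-action (Lemma~\ref{lem:common_subdivision}). For every $v\in S^3$ the isotropy $G_v$ fixes $\R\cdot v$ and therefore acts on $T_vS^3\cong v^{\perp}\cong\R^3$ as a finite rotation group in $\SOr_3$. By the inductive hypothesis, $\R^3/G_v$ is PL homeomorphic to $\R^3$, so in particular it is a PL manifold. Lemma~\ref{lem:PL_isotropy_condition} then yields that $S^3/G$, equipped with the PL structure induced from $K/G$, is a closed PL $3$-manifold (closed because $G$ contains no reflection, so no boundary is produced). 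Every rotation in $\SOr_4$ fixes a $2$-plane and thus a pair of antipodal points of $S^3$, so the generators of $G$ all have fixed points on $S^3$ and Lemma~\ref{lem:simply_con_quotient} shows that $S^3/G$ is simply connected.

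A closed, simply connected $3$-manifold automatically has the integral homology of $S^3$ (by Poincar\'e duality together with Hurewicz and universal coefficients), so the hypotheses of Theorem~\ref{thm:poincare_pl_manifold} are met in dimension three. Applying that theorem — whose dimension-three input is Perelman's proof of the Poincar\'e conjecture, cited in Section~\ref{sub:poincare} — would supply a PL homeomorphism between $S^3/G$ and the standard PL $3$-sphere $\partial\Delta^4$. Taking open cones and using the description of $\R^4/G$ as the cone on the link of the origin in $K/G$ (Section~\ref{sub:pl_structure}) would then upgrade this to a PL homeomorphism $\R^4/G\cong \R^4$, which is exactly the statement of the main theorem for this class of groups.

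The genuine obstacle wrapped into the proof is the appeal to the three-dimensional PL Poincar\'e conjecture; modulo that deep input, everything is a formal consequence of the preliminary machinery of Sections~\ref{sub:pl_structure}--\ref{sub:poincare}. The attraction of this argument over the classification-based approach of \cite{Mikhailova} is precisely that the enumeration of rotation groups in $\SOr_4$ plays no role; the same scheme will be used in higher dimensions in Section~\ref{sub:poincare}, where the two genuinely new ingredients one still has to supply are simple connectivity of $S^{n-1}/G$ and the homology of a sphere for $S^{n-1}/G$.
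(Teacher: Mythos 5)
Your argument is correct and is essentially the paper's own proof: handle $n\le 3$ via the fact that finite subgroups of $\SOr_2,\SOr_3$ are orientation-preserving subgroups of reflection groups, observe that isotropy groups of $G<\SOr_4$ act as rotation groups on $\R^3$ so that Lemma~\ref{lem:PL_isotropy_condition} makes $S^3/G$ a closed PL manifold, invoke Lemma~\ref{lem:simply_con_quotient} for simple connectivity, and conclude with the PL Poincar\'e conjecture (Theorem~\ref{thm:poincare_pl_manifold}) and the cone construction. The only difference is that you spell out the intermediate steps (faithfulness of the isotropy action on $v^{\perp}$, the homology computation) that the paper leaves implicit.
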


Now let $G<\Or_4$ be a finite group and suppose that $H \triangleleft G$ is a rotation group (again, the case of lower dimensions works analogous). We endow $S^3$ with an admissible PL structure for the action of $G$ (and hence $H$). According to \cite{Lange} (cf.  \cite{Lange_thesis} and \cite[p.~208]{MR1435975}) the quotient $S^3/H$ admits a smooth structure such that the identity map of $S^3/H$ is a PD homeomorphism and such that $G/H$ acts smoothly on it. Since smoothings of PL manifolds in dimension three are unique up to diffeomorphism \cite[Thm.~3.10.9]{MR1435975}, this action is smoothly conjugate to a smooth action of $G/H$ on the standard sphere $S^3$ and by \cite[Thm.~E]{MR2491658} it is thus smoothly conjugate to a linear action on $S^3$. Therefore, we have a PD homeomorphism $g:S^3/H \To S^3$ and a homomorphism $r:G\To \SOr_4$ such that the following diagram commutes
\[
	\begin{xy}
		\xymatrix
		{
		   G \times S^3/H \ar[r] \ar[d]_{r\times g}  & S^3/H  \ar[d]_{g}    \\
		   r(G) \ar[r] \times S^3 & S^3   
		}
	\end{xy}
\]
According to Section \ref{sub:approx} we can replace $g$ by a PD homeomorphism $f$ such that the induced PL structure on $S^3$ is admissible with respect to the action of $r(G)$. Therefore, the PL linearization principle can be applied to the groups $H\triangleleft G$ (cf. Section \ref{sub:linearization_principle}). In particular, taking $G$ as a reflection-rotation group and $H$ as its orientation preserving subgroup proves our main theorem for all reflection-rotation groups up to dimension four. Summarizing we have

\begin{lem}\label{lem:main_uptofour_sum} Our main theorem holds for reflection-rotation groups in dimension up to four. If $H <\Or_n$, $n\leq 4$, is a rotation group normalized by a finite group $G<\Or_n$, then the PL linearization principle can be applied to the groups $H \triangleleft G$.
\end{lem}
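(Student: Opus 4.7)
The plan is to argue by induction on $n \le 4$ and to observe that the body of the statement has essentially already been carried out in the discussion preceding it, so the proof consists in organizing that discussion as a single inductive step and extracting the conclusion for every reflection-rotation group, not just for $H = G^{+}$.

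For $n \le 2$ everything is trivial, and for $n = 3$ every finite subgroup of $\SOr_3$ is the orientation preserving subgroup of a real reflection group, so Lemma \ref{lem:main_refl} handles both the main theorem and the PL linearization principle in those dimensions. Assume now that the statement holds in all dimensions $<n$ for some $n \le 4$, and let $H \triangleleft G$ with $H < \Or_n$ a rotation group. By Lemma \ref{lem:PL_isotropy_condition} and the inductive hypothesis applied to all nontrivial isotropy groups $H_v$, the quotient $S^{n-1}/H$ is a closed PL manifold with respect to any admissible PL structure, and by Lemma \ref{lem:simply_con_quotient} it is simply connected (for $n = 4$). The PL Poincar\'e conjecture in dimension three (cf.\ Section \ref{sub:poincare}) then identifies $S^{n-1}/H$ with the standard PL sphere; this yields Lemma \ref{lem:main_uptofour}, i.e.\ the main theorem for rotation groups in these dimensions.

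The remaining work is to establish the PL linearization principle for $H \triangleleft G$. Following the argument already sketched before the statement, I would apply the equivariant smoothing result of Illman (via \cite{Lange}) to produce a smooth structure on $S^{n-1}/H$ compatible with the PL structure and with respect to which $G/H$ acts smoothly. Uniqueness of smoothings of PL $3$-manifolds up to diffeomorphism, together with \cite[Thm.~E]{MR2491658} (smooth finite group actions on $S^3$ are smoothly conjugate to linear ones), then provides a PD homeomorphism $g\colon S^{n-1}/H \to S^{n-1}$ and a homomorphism $r\colon G \to \Or_n$ making the equivariance square of Section \ref{sub:linearization_principle} commute; Section \ref{sub:approx} replaces $g$ by a $G/H$-equivariant PL homeomorphism $f$ with respect to an admissible PL structure on $S^{n-1}$ for $r(G)$. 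Coning off yields the full PL linearization principle for $H \triangleleft G$.

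Taking in particular $G$ to be any reflection-rotation group in dimension $\le n$ and $H = G^{+}$ its (normal) orientation preserving subgroup, the PL linearization principle reduces the determination of $\R^n/G$ to $\R^n/r(G)$, where $r(G)$ is generated by the images of reflections in $G$ and hence is a real reflection group (or trivial) by the lemma in Section \ref{sub:linearization_principle}; Lemma \ref{lem:main_refl} finishes the job, with the boundary dichotomy matching the presence or absence of reflections. The main obstacle in this plan is not in any piece of the PL bookkeeping but in the appeal to \cite[Thm.~E]{MR2491658}: the smooth linearization of finite group actions on $S^3$, which rests on geometrization, is the one genuinely deep input that allows us to bypass the classification of four-dimensional reflection-rotation groups used in \cite{Mikhailova}.
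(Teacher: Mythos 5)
Your proposal follows essentially the same route as the paper: induction on $n\le 4$, with Lemma \ref{lem:PL_isotropy_condition} plus the PL Poincar\'e conjecture giving the main theorem for rotation groups, and the equivariant smoothing of \cite{Lange}, uniqueness of smoothings of PL $3$-manifolds, and \cite[Thm.~E]{MR2491658} combined with Section \ref{sub:approx} giving the PL linearization principle, after which $H=G^{+}$ handles general reflection-rotation groups. The only cosmetic difference is your separate treatment of the $n=3$ base case via Lemma \ref{lem:main_refl}, which the paper subsumes under the same inductive argument.
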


Note that in principle the usage of the Poincar\'e conjecture can be avoided by applying other means such as the algebra of polynomial invariants or explicit constructions of fundamental domains (cf. \cite{Mikhailova} and Section \ref{sub:exceptional_groups_hom} for illustrations of these methods). However, proofs along such lines are cumbersome and so we do not refrain from using the Poincar\'e conjecture as a convenient tool.

\subsection{Reducible reflection-rotation groups}
\label{sub:red_pse_gro}

Now let $G<\Or_n$ be a reducible reflection-rotation group and let $\R^n=V_1+\ldots +V_k$ be a decomposition into irreducible components. Let $H_i \triangleleft G$ be the normal subgroup generated by rotations that only act in $V_i$ (i.e. by rotations of the first kind in $V_i$ in terms of \cite{LaMik}) and let $G_i$ be the projection of $G$ to the $i$-th factor. We can assume that $H_i \neq G_i$ because otherwise $H_i$ splits of as a direct factor. The pairs $H_i \triangleleft G_i$ that occur in this way are classified in  \cite[Thm.~3]{LaMik}. It is shown that this classification amounts to a classification of pairs $M \triangleleft G_{rr}$ of an irreducible reflection-rotation group $G_{rr}$ that contains a reflection and a normal subgroup $M \triangleleft G_{rr}$ generated by rotations such that $G_{rr}$ is generated by its reflections and by $M$ (cf. the remark following \cite[Thm.~3]{LaMik}). All such pairs with nontrivial $M$ are listed in Theorem \ref{thm:reducible_pairs} in the appendix.

Suppose there is some $i \in \{1,\ldots, k\}$ and some nontrivial rotation group $H<H_i$ normalized by $G_i$ such that the PL linearization principle can be applied to $H\triangleleft G_i$. Then the PL linearization principle can also be applied to $H\triangleleft G$. Hence, the following lemma holds (cf. Section \ref{sub:linearization_principle}).

\begin{lem}\label{lem:redu_red} Let $G$ be a reducible reflection-rotation group and suppose that some $H_i$ is nontrivial. Suppose further that our main theorem holds for all reflection-rotation groups of smaller order than $G$. If for each pair $M\triangleleft G_{rr}$ occurring in Theorem \ref{thm:reducible_pairs} there exists some nontrivial rotation group $H\triangleleft M$ normalized by $G_{rr}$ such that the PL linearization principle can be applied to the groups $H\triangleleft G_{rr}$, then our main theorem holds for the group $G$.
\end{lem}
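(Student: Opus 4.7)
The plan is to apply the PL linearization principle to a judiciously chosen subgroup of $G$ and then invoke the inductive hypothesis. Since some $H_i$ is nontrivial by assumption, I fix such an index $i$. The pair $H_i \triangleleft G_i$ then fits into the classification of Theorem \ref{thm:reducible_pairs}: as recalled in the paragraph preceding the lemma, classifying these pairs amounts to classifying pairs $M \triangleleft G_{rr}$ with $G_{rr}$ an irreducible reflection-rotation group containing a reflection and with $G_{rr}$ generated by its reflections together with $M$. Applying the hypothesis of the lemma to this pair produces a nontrivial rotation group $H \triangleleft H_i$ normalized by $G_i$ such that the PL linearization principle applies to $H \triangleleft G_i$.

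The next step is to transfer this conclusion from $G_i$ to $G$. The sentence immediately preceding the lemma asserts that if the PL linearization principle applies to $H \triangleleft G_i$, then it applies to $H \triangleleft G$ as well. Invoking this, I obtain a homomorphism $r : G \to \Or_n$ with kernel $H$ and a PL homeomorphism $\tilde F : \R^n/G \to \R^n/r(G)$. Since $H$ is nontrivial, $|r(G)| < |G|$, and by the image lemma of Section \ref{sub:linearization_principle} (which sends reflections to reflections and rotations to rotations), $r(G)$ is itself a reflection-rotation group.

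Now I apply the induction hypothesis to $r(G)$: the main theorem describes $\R^n/r(G)$ as PL homeomorphic to $\R^n$ when $r(G)$ contains no reflection and to $\R^{n-1}\times\R_{\geq 0}$ when it does. Through $\tilde F$ the same PL description transfers to $\R^n/G$. It remains to match the reflection count on both sides: by the image lemma of Section \ref{sub:linearization_principle}, a reflection in $G$ forces a reflection in $r(G)$; conversely, if $G$ contains no reflection then $G$ is a rotation group, and since $r$ preserves rotations this makes $r(G)$ a rotation group as well. Hence $G$ contains a reflection if and only if $r(G)$ does, which combined with the PL identification through $\tilde F$ gives the desired conclusion for $G$.

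The argument is a short bookkeeping reduction once the hypothesis is in hand, and I do not expect any serious obstacle beyond correctly identifying the pair $H_i \triangleleft G_i$ with one listed in Theorem \ref{thm:reducible_pairs} and trusting the transfer of the PL linearization principle from the irreducible factor $G_i$ (acting on $V_i$) to the ambient group $G$ (acting on all of $\R^n$) that is stated just before the lemma. The core structural content — reducing $G$ to a group $r(G)$ of strictly smaller order via PL linearization and then applying induction — is immediate.
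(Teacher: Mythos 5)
Your proposal is correct and follows essentially the same route as the paper: identify the pair $H_i\triangleleft G_i$ with a pair $M\triangleleft G_{rr}$ from Theorem \ref{thm:reducible_pairs}, use the hypothesis to get $H$ with the PL linearization principle applicable to $H\triangleleft G_i$ and hence to $H\triangleleft G$, and then apply the induction hypothesis to the strictly smaller reflection-rotation group $r(G)$. The paper leaves the final bookkeeping (that $r(G)$ is a reflection-rotation group of smaller order and that $G$ contains a reflection if and only if $r(G)$ does) implicit, so your spelling it out is a harmless elaboration rather than a deviation.
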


Note that it is necessary to verify the assumption on the pair $M\triangleleft G_{rr}$ in each case of Theorem \ref{thm:reducible_pairs}. In fact, given such a pair $M\triangleleft G_{rr}$ a reducible rotation group $G$ can be constructed with two irreducible components and with $H_1=H_2=M$, $G_1=G_2=G_{rr}$ (cf. \cite[Thm.~4]{LaMik}). In the course of the proof, in each case of Theorem \ref{thm:reducible_pairs} we either verify the condition on $M\triangleleft G_{rr}$ directly or reduce such a proof to showing our main result for rotation groups of order less than $G_{rr}$. A reference to the lemma in which we do this for a specific pair $M\triangleleft G_{rr}$  can also be found in Theorem \ref{thm:reducible_pairs}.

The only case in which we cannot apply Lemma \ref{lem:redu_red} is, in the notation above, when all the $H_i$ are trivial, i.e. when there are no rotations in $G$ that act in a single $V_i$ factor. Suppose this is the case. If the group $G$ does not split as a product of nontrivial factors of lower order, it is either a reflection group of type $\A_1$, a rotation group of type $\Wp(\A_1\times \dots \times \A_1)$ or a rotation group of the form
\[
		\Delta_{\varphi}(W\times W):=\{(g,\varphi(g)) \in W\times W | g \in W\}< \Or_m \times \Or_m 
\]
for some reflection group $W<\Or_m $ and some isomorphism $\varphi:W\To W$ that maps reflections onto reflections \cite[Thm.~4, Cor.~64]{LaMik}. The first two cases are treated in Lemma \ref{lem:main_refl}. For $m<3$ the third case is treated in Lemma \ref{lem:main_uptofour_sum}. If all labels of the Coxeter graph of $W$ lie in $\{3,4,6\}$, then every automorphism of $W$ that maps reflections onto reflections can be realized through conjugation by an orthogonal transformation in the normalizer of $W$ in $\Or_m $ \cite[Cor. 19, p.~7]{MR1997410}. In this case the quotient $\R^{2m}/\Delta_{\varphi}(W\times W)$ is PL homeomorphic to $\R^{2m}/\Delta_{\mathrm{id}}(W\times W)$. Hence this case is subject of Lemma \ref{lem:main_unit}, since $\Delta = \Delta_{\mathrm{id}}(W\times W) <W\times W$ preserves the complex structure
\[
	\begin{array}{ccl}
		 	J= \left(
		  \begin{array}{cc}
		    0 & 1_m  \\
		    -1_m & 0 \\
		  \end{array}\right)
  \end{array}	.
\]
and can thus be regarded as a unitary reflection group acting on $\Co^m$. The only remaining cases are $W=W(\Hn_3)$ and $W=W(\Hn_4)$ (cf. Theorem \ref{thm:reducible_pairs} or \cite[Thm.~3]{LaMik}) and indeed, in these cases there exist outer automorphisms of $W$ that map reflections onto reflections but cannot be realized through conjugation in $\Or_m $ (cf. \cite[pp.~31-32]{Franszen}). Note that the argument in \cite{Mikhailova} breaks down for groups $\Delta_{\varphi}(W\times W)$ for which $\varphi$ cannot be realized through conjugation since the proof of \cite[Thm.~1.2]{Mikhailova} does not work in this case. Summarizing, we have

\begin{lem}\label{lem:redu_red2} In the notation above, let $G$ be a reducible reflection-rotation group such that all the $H_i$ are trivial. Suppose that our main theorem holds for all reflection-rotation groups of smaller order than $G$. If $G$ is different from $\Delta_{\varphi}(W\times W)$ for $W$ of type $\Hn_3$ and $\Hn_4$, then our main theorem holds for the group $G$.
\end{lem}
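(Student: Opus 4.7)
The plan is to split into two cases according to whether $G$ further decomposes as an internal direct product of two proper reflection-rotation subgroups acting on complementary invariant subspaces, and in each case either to apply the inductive hypothesis or to quote a previously proven lemma.

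First I would treat the split case $G = G' \times G''$ with $G' < \Or_{n'}$ and $G'' < \Or_{n''}$ both nontrivial and $n = n' + n''$. Since $|G'|, |G''| < |G|$, the inductive hypothesis applies to each factor. Taking admissible triangulations of $\R^{n'}$ and $\R^{n''}$ for $G'$ and $G''$ and forming their standard product subdivision yields an admissible triangulation of $\R^n$ for $G$, with respect to which $\R^n / G$ identifies PL-homeomorphically with $\R^{n'}/G' \times \R^{n''}/G''$ (the universal property of Lemma \ref{lem:universal_prop} applied to the projection $q' \times q''$ makes this precise). Substituting the inductive models for the two factors, the product is either $\R^n$, $\R^{n-1} \times \R_{\geq 0}$, or $\R^{n-2} \times (\R_{\geq 0})^2$; in the last case the elementary corner-straightening PL homeomorphism $(\R_{\geq 0})^2 \cong_{PL} \R \times \R_{\geq 0}$ (both spaces being contractible PL $2$-manifolds with boundary PL-homeomorphic to $\R$) collapses this to $\R^{n-1} \times \R_{\geq 0}$. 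Since $G$ contains a reflection if and only if at least one of $G', G''$ does, this gives the conclusion of the main theorem for $G$.

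Next I would treat the non-splitting case. The structural consequence of \cite[Thm.~4, Cor.~64]{LaMik} recalled in the paragraph preceding the statement then forces $G$ to be either a reflection group of type $\A_1$, the rotation group $\Wp(\A_1 \times \dots \times \A_1)$, or a diagonal subgroup $\Delta_\varphi(W \times W)$ for some irreducible reflection group $W < \Or_m$ and an isomorphism $\varphi : W \To W$ permuting reflections. The first two possibilities are immediate from Lemma \ref{lem:main_refl}. For the third, when $m \leq 2$ Lemma \ref{lem:main_uptofour_sum} applies; and when $m \geq 3$ with every label of the Coxeter graph of $W$ in $\{3, 4, 6\}$ one invokes \cite[Cor.~19]{MR1997410} to obtain $g$ in the normalizer of $W$ in $\Or_m$ realizing $\varphi$ by conjugation, so that conjugation by $(1_m, g) \in \Or_{2m}$ carries $\Delta_\varphi(W \times W)$ onto $\Delta_{\mathrm{id}}(W \times W)$. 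The latter commutes with the complex structure $J$ and therefore may be regarded as a unitary reflection group on $\Co^m$, reducing this subcase to Lemma \ref{lem:main_unit}. The only residual types are $W = W(\Hn_3)$ and $W = W(\Hn_4)$, which are exactly those excluded by the hypothesis.

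I expect the main obstacle to be the verification in the splitting case that the PL quotient of a product is the product of PL quotients of admissibly triangulated factors, and the accompanying corner-straightening step when both factors contain a reflection; neither is deep, but both warrant explicit treatment. Every remaining step in the argument is a direct citation of a lemma already proved earlier in the paper.
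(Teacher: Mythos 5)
Your proposal is correct and follows essentially the same route as the paper: the non-splitting case is handled by exactly the same citation of \cite[Thm.~4, Cor.~64]{LaMik} and the same case division ($\A_1$, $\Wp(\A_1\times\dots\times\A_1)$, and $\Delta_\varphi(W\times W)$ reduced via $m<3$, Coxeter labels in $\{3,4,6\}$, and the excluded $\Hn_3$, $\Hn_4$). The only difference is that you spell out the product/corner-straightening step for the splitting case, which the paper leaves implicit; that is a harmless (indeed welcome) elaboration rather than a different argument.
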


The two exceptional cases excluded in the lemma are treated in Section \ref{sub:exceptional_groups_hom}. 

\subsection{Monomial reflection-rotation groups}
\label{sub:monomial_groups_hom}

Let $D(n)$ be the diagonal subgroup of $\Or_n$ and let $D^+(n)$ be its orientation preserving subgroup. For a permutation group $H<\sym_n$ consider the monomial groups $M=D^+(n) \rtimes H <\SOr_n$ and $\Mt=D(n) \rtimes H <\Or_n$. The subgroup $H<\Mt$ leaves the spherical simplex $\Lambda=\{x\in S^{n-1}|x_i\geq 0, i=1,\ldots,n\}$ invariant and acts on its boundary $\partial \Lambda$. Choose an admissible triangulation for the action of $\Mt$ on $S^{n-1}$ that refines the triangulation of $S^{n-1}$ by the $D(n)$ translates of $\Lambda$. The quotient $S^{n-1}/D^+(n)$ of $S^{n-1}$ by the diagonal subgroup $D^+(n)$ can be obtained by gluing together two copies of $\Lambda$ along their boundaries. It can be realized as the suspension of $\partial \Lambda$ whose cone points are interchanged by the reflections in $D(n)$. Therefore the PL linearization principle can be applied to the groups $D^+(n)\triangleleft \Mt$. In particular, we have (note that $D(\Wp(\BC_n))=D(\Wp(\D_n))=D^+(n)$)

\begin{lem}\label{lem:line_refl} The PL linearization principle can be applied to the groups $D(\Wp(\BC_n))\triangleleft W(\BC_n)$ and thus also to the groups $D(\Wp(\D_n))\triangleleft W(\D_n)$.
\end{lem}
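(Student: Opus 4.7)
The plan is to recognize this lemma as essentially a corollary of the construction carried out in the paragraph immediately preceding it, and to hand off the second claim to the first by means of a restriction argument.

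First, for the claim about $D(\Wp(\BC_n))\triangleleft W(\BC_n)$, I would observe that the Weyl group $W(\BC_n)$ is precisely the monomial group $\Mt = D(n)\rtimes H$ for $H=\sym_n$, and that the diagonal subgroup $D(\Wp(\BC_n))$ equals $D^+(n)$ (as explicitly recorded in the parenthetical remark just before the lemma). So the lemma's first assertion is the special case $H=\sym_n$ of the general statement just established. Concretely, one takes an admissible triangulation of $S^{n-1}$ for the action of $W(\BC_n)$ refining the tiling by $D(n)$-translates of the simplex $\Lambda = \{x\in S^{n-1}:x_i\geq 0\}$, realizes $S^{n-1}/D^+(n)$ as the suspension of $\partial\Lambda$ obtained by gluing two copies of $\Lambda$ along $\partial\Lambda$, and notes that the induced linear action of $\sym_n = W(\BC_n)/D^+(n)$ on this PL sphere is exactly the standard linear action on $S^{n-1}$, with the cone points interchanged by the reflections in $D(n)\smallsetminus D^+(n)$. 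The linearization square of Section \ref{sub:linearization_principle} then commutes by construction.

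Second, for the claim about $D(\Wp(\D_n))\triangleleft W(\D_n)$, I would argue by restriction. Since $W(\D_n) = D^+(n)\rtimes\sym_n$ is a subgroup of $W(\BC_n) = D(n)\rtimes\sym_n$, and $D(\Wp(\D_n)) = D^+(n) = D(\Wp(\BC_n))\cap W(\D_n)$, the PL homeomorphism $F:\R^n/D^+(n)\To\R^n$ and the homomorphism $r:W(\BC_n)\To\Or_n$ with kernel $D^+(n)$ that witness the linearization principle for the first pair restrict to analogous data $(F,\,r|_{W(\D_n)})$ for the second pair. The relevant commutative square for $W(\D_n)$ is simply the restriction of the one for $W(\BC_n)$, and the kernel of $r|_{W(\D_n)}$ equals $D^+(n)\cap W(\D_n) = D^+(n)$, so no additional verification is needed.

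There is no serious obstacle: the lemma is essentially bookkeeping, extracting the two instances of the monomial construction that will be invoked later. The only point requiring a moment of care is checking that the diagonal subgroups in question really coincide with $D^+(n)$, which follows because every orientation-preserving diagonal element of $\Or_n$ lies in $W(\D_n)\subseteq W(\BC_n)$, and conversely the diagonal part of either group is forced to be orientation preserving.
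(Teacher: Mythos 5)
Your argument is correct and takes essentially the same route as the paper: the paper proves the lemma precisely by specializing the monomial construction $\Mt=D(n)\rtimes H$ of the immediately preceding paragraph to $H=\sym_n$ and noting $D(\Wp(\BC_n))=D(\Wp(\D_n))=D^+(n)$, with the $W(\D_n)$ case following by restriction of $(F,r)$ to the subgroup $W(\D_n)<W(\BC_n)$, just as you describe. One cosmetic slip: $W(\BC_n)/D^+(n)$ is not $\sym_n$ but $\cyc_2\times\sym_n$; your subsequent description of the linearized action (permutations acting on $\partial\Lambda$, cone points of the suspension swapped by the coset of a coordinate reflection) is nonetheless the correct one.
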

Note that with respect to the constructed linearization the reflection in $\Mt/D^+(n)$ acts in a $1$-dimensional subspace orthogonal to a subspace $\R^{n-1}$ in which $H\cong M/D^+(n)$ acts. Now suppose that $M$ is a rotation group. Then the linearization of $H$ acts as a rotation group (cf. Section \ref{sub:linearization_principle}). With these observations we obtain
\begin{lem}\label{lem:monom} In the notation above, suppose that $M$ is a rotation group and that our main theorem holds for all reflection-rotation groups of smaller order than $M$. Then our main theorem holds for the reflection-rotation groups $M$ and $\Mt$ and the PL linearization principle can be applied to the pairs $M\triangleleft \Mt$. In particular, this applies in the cases of $M=M_5, M_6,M_7, M_8,M(\D_n)$ in Theorem \ref{thm:clas_red_psg}, $(v)$, $(a)$.
\end{lem}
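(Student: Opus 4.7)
The plan is to stack two applications of the PL linearization principle: one already provided by Lemma~\ref{lem:line_refl}, and a second supplied by the inductive hypothesis. By Lemma~\ref{lem:line_refl} the principle applies to $D^+(n)\triangleleft\Mt$, producing a PL homeomorphism $F\colon\R^n/D^+(n)\to\R^n$ that intertwines the $\Mt$-action with a linear action of $r(\Mt)=\Mt/D^+(n)\cong\cyc_2\times H$, in which the $\cyc_2$-factor acts as a reflection in a $1$-dimensional subspace $L\subset\R^n$ and $H$ acts orthogonally on the complementary subspace $V\cong\R^{n-1}$, as recorded in the paragraph following Lemma~\ref{lem:line_refl}. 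Under these identifications $r(M)=M/D^+(n)$ is exactly the slot $\{1\}\times H\subset\cyc_2\times H$.

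Since $M$ is a rotation group and the linearization $r$ sends rotations to rotations (Section~\ref{sub:linearization_principle}), the subgroup $H<\Or(V)$ is generated by rotations, hence is a rotation group on $V$. Because $|H|=|M|/|D^+(n)|<|M|$, the inductive hypothesis supplies a PL homeomorphism $G\colon V/H\to V$. Crossing with $L$ (on which $H$ acts trivially and the residual $\cyc_2=r(\Mt)/r(M)$ acts by reflection) yields a PL homeomorphism
\[
\mathrm{id}_L\times G\colon\R^n/r(M)=L\times(V/H)\;\longrightarrow\;L\times V=\R^n
\]
that intertwines the $\cyc_2$-action on the source with a linear reflection on the target. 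This is precisely the data required to apply the PL linearization principle to $r(M)\triangleleft r(\Mt)$, and the composition rule of Section~\ref{sub:linearization_principle} then upgrades this to the principle for $M=r^{-1}(r(M))\triangleleft\Mt$.

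From the linearized picture the quotients read off immediately: $\R^n/M\cong L\times V=\R^n$, matching the main theorem for the rotation group $M$ (no boundary), and $\R^n/\Mt\cong(L/\cyc_2)\times V=\R_{\geq 0}\times\R^{n-1}$, matching the main theorem for $\Mt$, which contains the reflections in $D(n)\setminus D^+(n)$. The listed cases $M=M_5,M_6,M_7,M_8,M(\D_n)$ are monomial rotation groups of the described form, so the argument applies to each. The one thing to verify carefully — bookkeeping rather than a genuine obstacle — is that $r(M)$ really sits inside the $\{1\}\times H$ slot and that the induced linear $H$-representation on $V$ is the natural one to which the inductive hypothesis can be applied; both are immediate from the explicit suspension description of $S^{n-1}/D^+(n)$ used in the proof of Lemma~\ref{lem:line_refl}.
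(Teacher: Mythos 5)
Your proposal is correct and follows essentially the same route as the paper: linearize via $D^+(n)\triangleleft \Mt$ using the suspension description of $S^{n-1}/D^+(n)$, observe that the residual reflection acts in a line orthogonal to the $\R^{n-1}$ on which $H\cong M/D^+(n)$ acts as a rotation group of order $|M|/2^{n-1}<|M|$, invoke the inductive hypothesis for $H$, and compose the two linearizations to get the principle for $M\triangleleft\Mt$. The only cosmetic difference is that you cite Lemma \ref{lem:line_refl} for the first step, whereas the general statement for $D^+(n)\triangleleft\Mt$ is established in the text immediately preceding that lemma.
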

The exceptional monomial rotation groups $M(\Qq_7)=M^p_7<\SOr_7$ and $M(\Qq_8)=M^p_8<\SOr_8$ (cf. Theorem \ref{thm:clas_red_psg}, $(v)$, $(a)$) are treated in Section \ref{sub:exceptional_groups_hom}.

\subsection{Imprimitive reflection-rotation groups}\label{sub:imprimitive_rot_groups}
Let $\mu_m <  \Co^*$ be the cyclic subgroup of $m$-th roots of unity. For a factor $p$ of $m$ set
\[
		A(m,p,n) := \left\{(\theta_1,\ldots,\theta_n)\in \mu_m^n | (\theta_1\dots \theta_n)^{m/p}=1 \right\}.
\]
Let $G(m,p,n)$ be the semidirect product of $A(m,p,n)$ with the symmetric group $\sym_n$. Then the natural realization of $G(m,p,n)$ in the unitary group $\mathrm{U}_n$ is an imprimitive unitary reflection group (cf. \cite[Ch. 2, p.~25]{MR2542964}). In the following we consider $G(m,p,n)$ as a real rotation group in $\SOr_{2n}$.

In this section we treat the irreducible imprimitive rotation groups 
\[
	\Gs(kq,k,n) = \left\langle G(kq,k,n),\tau \right\rangle\subgr \SOr_{2n}
\]
where $n>2$, $q\in \N$, $k=1,2$, $kq\geq 3$ and where $\tau$ is a rotation that conjugates the first two coordinates, i.e. 
\[
	\tau(z_1,z_2,z_3\ldots,z_n)=(\overline{z}_1,\overline{z}_2,z_3\ldots,z_n),
\]
and the corresponding irreducible imprimitive reflection-rotation groups
\[
	\Gt(kq,k,n) = \left\langle G(kq,k,n),s \right\rangle\subgr \SOr_{2n},
\]
where $s$ is a reflection that conjugates the first coordinate, i.e. 
\[
	s(z_1,z_2,z_3\ldots,z_n)=(\overline{z}_1,z_2,\ldots,z_n)
\]
(cf. \cite{LaMik} for more details on the constructions of these groups). Let $\R^{2n} = V_1 + \dots + V_n$ be a decomposition into components of a system of imprimitivity of $\Gt(kq,k,n)$ (and hence of $\Gs(kq,k,n)$), i.e. a decomposition into subspaces that are permuted by the group. Let $H\triangleleft\Gt(kq,k,n)$ be the normal subgroup generated by rotations in $\Gt(kq,k,n)$ that only act in one of the factors $V_i$, $i=1,\ldots,n$. The projection $H_i$ of $H$ to $\Or(V_i)$ is a cyclic group of order $q$ and the group $H$ splits as a product of these projections, i.e. $H=H_1\times \dots \times H_n$. Because of $k\in \{1,2\}$ and $kq \geq 3$, the group $H$ is nontrivial. Due to Lemma \ref{lem:main_uptofour_sum} the PL linearization principle can be applied to the groups $H\triangleleft \Gt(kq,k,n)$. Hence, we have
\begin{lem}\label{lem:imprimi} Let $G$ be a reflection-rotation group of type $\Gt(kq,k,n)$ or $\Gs(kq,k,n)$ with $n>2$, $k=1,2$, $kq\geq 3$ and suppose that our main theorem holds for reflection-rotation groups of smaller order than $G$. Then our main result holds for the group $G$.
\end{lem}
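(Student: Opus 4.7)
The plan is to read this lemma as an essentially one-line consequence of the PL linearization set up in the paragraph preceding its statement, combined with the inductive hypothesis. For $G$ of either type, let $H = H_1 \times \cdots \times H_n \triangleleft G$ be the normal rotation subgroup singled out above; this is nontrivial because $k \in \{1,2\}$ together with $kq \geq 3$ forces $q \geq 2$, so each cyclic factor $H_i$ of order $q$ is nontrivial. By what precedes, the PL linearization principle applies to $H \triangleleft G$, yielding a homomorphism $r \colon G \To \Or_{2n}$ with kernel $H$ and a PL homeomorphism $\tilde{F} \colon \R^{2n}/G \To \R^{2n}/r(G)$.

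Next I would invoke the lemma from Section \ref{sub:linearization_principle} stating that $r$ sends reflections to reflections and rotations to rotations. This ensures $r(G)$ is itself a reflection-rotation group, containing a reflection precisely when $G$ does, i.e.\ in the $\Gt$ case but not in the $\Gs$ case. Since $H$ is nontrivial, $|r(G)| = |G|/|H| < |G|$, so the inductive hypothesis applies to $r(G)$: it produces a PL homeomorphism between $\R^{2n}/r(G)$ and either $\R^{2n}$ or $\R^{2n-1} \times \R_{\geq 0}$, according to whether $r(G)$ contains a reflection. Composing with $\tilde{F}$ transfers this conclusion to $\R^{2n}/G$, which is precisely the main theorem for $G$.

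The substantive content really lies in the preceding paragraph, where Lemma \ref{lem:main_uptofour_sum} is invoked to produce the PL linearization of $H \triangleleft G$. The point I would want to be particularly careful about is that, although $H$ acts on the ambient $\R^{2n}$ of arbitrarily large dimension, its direct product structure $H = H_1 \times \cdots \times H_n$ lets the linearization be built blockwise on each 2-dimensional imprimitivity component $V_i$, where the dimension-$\leq 4$ hypothesis of Lemma \ref{lem:main_uptofour_sum} does apply; the pieces can then be assembled into a $G$-equivariant global linearization using that $G$ permutes the $V_i$ compatibly with their cyclic actions. Granted this, the lemma itself amounts to a bookkeeping step on top of the induction.
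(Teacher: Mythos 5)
Your proposal is correct and follows essentially the same route as the paper: identify the nontrivial normal rotation subgroup $H=H_1\times\cdots\times H_n$ supported on the imprimitivity components, apply the PL linearization principle to $H\triangleleft G$ via Lemma \ref{lem:main_uptofour_sum}, and conclude by the inductive hypothesis applied to the smaller reflection-rotation group $r(G)$. Your final caveat about assembling the linearization blockwise on the two-dimensional components is well taken, since the paper invokes Lemma \ref{lem:main_uptofour_sum} for $H$ acting in $\R^{2n}$ with $2n>4$ and leaves that reduction implicit.
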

Moreover, we see
\begin{lem}\label{lem:line_impr} Let $M\triangleleft G_{rr}$ be a pair occurring in Theorem \ref{thm:reducible_pairs} of type $\Gs(kq,k,n)\triangleleft \Gt(kq,k,n)$ or $\Gs(2q,2,n)\triangleleft \Gt(2q,1,n)$, $n>2$, $k=1,2$, $kq \geq 3$. Then there exists a nontrivial rotation group $H<M$ normalized by $G_{rr}$ such that the PL linearization principle can be applied to the groups $H \triangleleft G_{rr}$.
\end{lem}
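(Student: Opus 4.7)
The plan is to take the same ``pure'' diagonal rotation subgroup that appeared in the proof of Lemma \ref{lem:imprimi} and verify that it lies in $M$ and is normalized by the slightly larger ambient group $G_{rr}$. Decompose $\R^{2n}=V_1\oplus\dots\oplus V_n$ into the components of a system of imprimitivity, let $H_i<\SOr(V_i)$ be the cyclic group of order $q$ of rotations of $V_i\cong\Co$ by $q$-th roots of unity, and set $H=H_1\times\dots\times H_n$.

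First I would check the structural conditions. A pure diagonal tuple $(1,\dots,\theta_i,\dots,1)$ with $\theta_i\in\mu_q$ satisfies $\theta_i^q=1$ and therefore lies both in $A(kq,k,n)$ and in $A(2q,2,n)$; hence $H<M$ in both of the cases to be treated. Since $k\in\{1,2\}$ and $kq\geq 3$ force $q\geq 2$, the group $H$ is nontrivial. Normality of $H$ in $G_{rr}$ follows because the $\sym_n$-factor permutes the $H_i$ as blocks, the monomial diagonal part commutes with $H$, and the generators $s$ or $\tau$ invert each $H_i$ on which they act (complex conjugation sends $\mu_q$ to itself).

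Next I would apply the PL linearization principle to $H\triangleleft G_{rr}$ by writing down the linearization explicitly. For each factor, Lemma \ref{lem:main_uptofour_sum} in dimension two provides a PL homeomorphism $f_i\colon V_i/H_i\To V_i$ realizing $z\mapsto z^q$; their product is a PL homeomorphism
\[
F\colon \R^{2n}/H\;\cong\;(V_1/H_1)\times\dots\times(V_n/H_n)\;\LTo\;\R^{2n}.
\]
In the new coordinates $w_i=z_i^q$ on the target, each generator of $G_{rr}$ descends to an orthogonal transformation: permutations in $\sym_n$ permute the $w_i$; a diagonal element $(\theta_i)$ in the monomial part sends $w_i$ to $\theta_i^q w_i$, with $\theta_i^q\in\mu_k$ in the first case and $\theta_i^q\in\{\pm 1\}$ in the second case; and $s$ or $\tau$ act on the first one or two coordinates by $w_i\mapsto\bar w_i$, since $\overline{z^q}=\bar z^q$. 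These orthogonal actions piece together to give a homomorphism $r\colon G_{rr}\To\Or_{2n}$ with kernel $H$ making the linearization diagram from Section \ref{sub:linearization_principle} commute.

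I expect the main step to be the verification in the previous paragraph: checking that the product PL structure on $\R^{2n}/H$ agrees with the natural quotient PL structure on $\R^{2n}/H$, and that each generator of $G_{rr}$ (particularly $\tau$ in the group $\Gs(2q,2,n)$ sitting inside $\Gt(2q,1,n)$) acts as an orthogonal transformation in the $w$-coordinates. Once this is in place, the first case ($G_{rr}=\Gt(kq,k,n)$) reproduces precisely the linearization already used in Lemma \ref{lem:imprimi}, and the second case ($G_{rr}=\Gt(2q,1,n)$) follows by the same argument, since $\Gt(2q,1,n)$ differs from $\Gt(kq,k,n)$ only in its diagonal part, and that part still descends to an orthogonal subgroup $\mu_2^n\cap\ker(r|_{\sym_n})$ after quotienting by $H$.
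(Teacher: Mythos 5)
Your proposal is correct and follows essentially the same route as the paper: the paper also takes $H=H_1\times\dots\times H_n$ with $H_i\cong\mu_q$ the rotations acting in a single component $V_i$ of the system of imprimitivity, notes that $k\in\{1,2\}$ and $kq\geq 3$ force $H$ to be nontrivial, and then applies the PL linearization principle to $H\triangleleft G_{rr}$ (citing Lemma \ref{lem:main_uptofour_sum} for the low-dimensional factors rather than writing out the $w_i=z_i^q$ coordinates as you do). Your explicit factor-by-factor description of the linearized action is a correct unpacking of what the paper leaves implicit.
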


Observe that by now we have verified the conclusion of the preceding lemma for all pairs of groups $M\triangleleft G_{rr}$ occurring in Theorem \ref{thm:reducible_pairs}, and hence established the respective condition in Lemma \ref{lem:redu_red} on reducible reflection-rotation groups.

\subsection{Exceptional rotation groups}
\label{sub:exceptional_groups_hom}

The only indecomposable rotation groups which we have not treated in view of our main result yet are the exceptional irreducible rotation groups $M(\Rr_5)$, $M(\Ss_6)$, $M(\Qq_7)$, $M(\Qq_8)$ and $M(\Tt_8)$ and the exceptional reducible rotation groups $\Delta_{\varphi}(W\times W)$ for $W$ of type $\Hn_3$ and $\Hn_4$ (cf. Section \ref{sub:red_pse_gro} and \cite[Sect.~4.6]{LaMik}). The proofs in \cite{Mikhailova} in the cases of $M(\Rr_5)$, $M(\Ss_6)$, $M(\Qq_7)$, $M(\Qq_8)$ in principle work \cite[II)-IV) in Thm.~1.4, p.~105]{Mikhailova} but lack some arguments. This manifests in the fact that isotropy groups, which determine the local structure of the respective quotient, are not examined. The cases of $M(\Tt_8)$ and $\Delta_{\varphi}(W\times W)$ for $W$ of type $\Hn_3$ and $\Hn_4$ are not considered in \cite{Mikhailova}.

In the cases $n>5$ we will make use of the PL version of the generalized Poincar\'e conjecture (cf. Theorem \ref{thm:poincare_pl_manifold}). For $n=5$ this tool is not available, which is why we have to perform a computation by hand in the case of $M(\Rr_5)$. Our arguments turn the approach in \cite[p.~102]{Mikhailova} to this case into a rigorous proof.

\begin{lem}\label{lem:R_5}
Our main result holds for the rotation group $M(\Rr_5)$, i.e. for $G=M(\Rr_5)<\SOr_5$ the PL quotient $\R^5/G$ is PL homeomorphic to $\R^5$.
\end{lem}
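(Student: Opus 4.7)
The plan is to combine the isotropy-based analysis developed in earlier sections with a direct PL construction, since the PL Poincar\'e conjecture is unavailable in dimension $4$ and $S^4/G$ is the sphere quotient we must identify.

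First I would verify that $S^4/G$ is a closed PL $4$-manifold. By Lemma~\ref{lem:main_uptofour_sum} together with the induction hypothesis (our main theorem for reflection-rotation groups of smaller order than $G$), every nontrivial isotropy group $G_v$, $v\in S^4$, is a rotation group acting in an at most $4$-dimensional subspace whose quotient is PL homeomorphic to that subspace. Lemma~\ref{lem:PL_isotropy_condition} then implies that $S^4/G$ is a closed PL $4$-manifold. Since $G=M(\Rr_5)$ is generated by rotations each of which fixes a point of $S^4$, Lemma~\ref{lem:simply_con_quotient} shows that $S^4/G$ is simply connected. To establish $H_*(S^4/G;\mathbb{Z})=H_*(S^4;\mathbb{Z})$, I would exhibit a family of subgroups $H_j<G$ of pairwise coprime indices for which $\mathbb{R}^5/H_j\cong\mathbb{R}^5$ holds PL by the lemmas already proven (for example a reflection subgroup from Lemma~\ref{lem:main_refl} and a rotation subgroup of odd order coming from the imprimitive or reducible part), and then apply Lemma~\ref{lem:homology_condition}.

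The main obstacle is upgrading this homological identification to a PL-sphere conclusion without Poincar\'e in dimension four. The strategy is to carry out explicitly the approach sketched on p.~102 of \cite{Mikhailova}. Concretely, I would take the fundamental domain $\Lambda\subset S^4$ for $G$ provided by Section~\ref{sub:fundamental_domain}; since $\Lambda$ is a spherical convex cell, it is a PL $4$-ball. The quotient $S^4/G$ is obtained from $\Lambda$ by identifying boundary faces in pairs according to the action of $G$ on $\partial\Lambda$, and a careful combinatorial description of these identifications yields an explicit simplicial model of $S^4/G$ compatible with an admissible triangulation in the sense of Section~\ref{sub:ad_triangulations}. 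Using the induced-collapse Lemma~\ref{lem:induced_collapse} together with Lemma~\ref{lem:collapse}, I would show that this model decomposes as the union of two PL $4$-balls glued along a PL $3$-sphere, so that Lemma~\ref{lem:gluing_pl_balls} identifies $S^4/G$ with a standard PL $4$-sphere and hence $\mathbb{R}^5/G$ with $\mathbb{R}^5$.

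The critical technical point, and the one at which Mikhailova's original exposition is incomplete, is the verification that the boundary identifications on $\Lambda$ and the collapses on the quotient cell structure are genuinely PL and not merely continuous. Here the isotropy information from the first paragraph is essential: at each orbit type the local model of the identification is exactly $T_v S^4/G_v$, which by the induction hypothesis is already a PL ball, and patching these local PL models along the stratification of $\partial\Lambda$ supplies the global PL structure. The remaining work is therefore a finite, case-by-case check of the face pairings generated by the rotations of $M(\Rr_5)$—routine but explicit—which I expect to be the bulk of the argument and which turns Mikhailova's outline into a rigorous PL proof.
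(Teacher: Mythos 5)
Your proposal follows essentially the same route as the paper: establish via the isotropy analysis that the quotient is a PL $4$-manifold, take the fundamental domain $\Lambda\subset S^4$, split it into an inner PL $4$-ball and a collar of $\partial\Lambda$, show the collar quotient is a PL $4$-ball by combining Lemma~\ref{lem:induced_collapse} with Lemma~\ref{lem:collapse} (the paper does this by collapsing it onto $\partial\Lambda/\!\sim$ and checking explicitly that the latter is collapsible), and conclude with Lemma~\ref{lem:gluing_pl_balls}. The explicit face-pairing computation you defer as ``routine'' is indeed the bulk of the paper's proof, and your preliminary homology step via coprime-index subgroups is harmless but unused, since as you note it cannot yield the PL-sphere conclusion in dimension four.
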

\begin{proof}
The outline of the proof is as follows. First we construct a fundamental domain $\Lambda$ on $S^4$ and choose an admissible triangulation that refines the tesselation of $S^4$ by the translates of $\Lambda$. With respect to the induced PL structure $\Lambda$ is a PL $4$-ball. We can choose a PL collar $A$ of $\partial \Lambda$ in $\Lambda$ that collapses onto $\partial \Lambda$ \cite[Cor.~3.17, Cor.~3.30]{MR0350744} (cf. Section \ref{sub:collapse}). The closure of the complement of $A$ in $\Lambda$, say $B$, is a PL $4$-ball. We set $Q=\partial \Lambda / \sim$ and $N=A / \sim$ where $\sim$ denotes the equivalence relation induced by $G$. Then we can recover $S^4/G$ by gluing together $N$ and $B$ along the PL $3$-sphere $\partial B$ which is not affected by $\sim$. In particular, we see that $S^4/G$ is a PL $4$-sphere, if we can show that $N$ is a PL $4$-ball. Due to Section \ref{sub:dim_four_groups_hom} and the fact that all isotropy groups of $G$ are rotation groups \cite[Lem.~27]{LaMik}, we already know that $N$ is a PL $4$-manifold with boundary (cf. Lemma \ref{lem:PL_isotropy_condition}). Moreover, since our triangulation is admissible, the projection $A \To N$ is simplicial and maps simplices homeomorphically onto simplices (cf. Section \ref{sub:ad_triangulations}). Hence, $N$ collapses onto $Q$ by Lemma \ref{lem:induced_collapse}. Therefore, according to Lemma \ref{lem:collapse} it is sufficient to show that $Q$ is collapsible in order to prove the lemma.

The group $G$ is isomorphic to the alternating group $\alt_5$ and a specific set of generators in $\alt_6$ is given by $(12)(34),(15)(23),(16)(24)$ (cf. \cite[p.~102]{Mikhailova}) where $G$ is regarded as the restriction of the permutation action of $\sym_6$ on $\R^6$ to the subspace $\R^5 = \{(x_1,\ldots,x_6)\in \R^6 | x_1+\ldots+x_6 =0\}$ of $\R^6$. A fundamental domain $\Lambda$ for the action of $G$ on $S^4$ is constructed in \cite[p.~103]{Mikhailova} as follows: For $v_0=\{-1,-1,-1,0,1,2\}$ we have $gv_0 \neq v_0$ for all $g\in G$ and thus
\[
		\Lambda = \bigcap_{g\in G} \{v\in S^4 | (v,v_0)\geq (v,gv_0)\}
\]
is a fundamental domain for the action of $G$ on $S^4\subseteq \R^5 \subseteq \R^6$. It can be described by the $8$ inequalities
\begin{align*}
	-x_3+x_4 & \geq 0, & -x_2+x_6 & \geq 0,\\
	-x_4+x_5 & \geq 0, & -x_1-2x_2+x_4+2x_5 & \geq 0,\\
	-x_5+x_6 & \geq 0, & -2x_1-x_2+x_4+2x_5 & \geq 0,\\
	-x_1+x_5 & \geq 0, & -2x_2-x_3+x_4+2x_5 & \geq 0 					
\end{align*}
and has vertices
\begin{align*}
	v_1 & = \frac{1}{\sqrt{30}}(-5,1,1,1,1,1), & v_2&= \frac{1}{\sqrt{30}}(1,-5,1,1,1,1),\\
	v_3 & = \frac{1}{\sqrt{30}}(1,1,-5,1,1,1), & v_4&= \frac{1}{\sqrt{30}}(-1,-1,-1,-1,-1,5),\\
	v_5 & = \frac{1}{\sqrt{6}}(1,-1,-1,-1,1,1), & v_6&= \frac{1}{\sqrt{84}}(1,1,-5,-5,4,4),\\
	v_7 & = \frac{1}{\sqrt{6}}(-1,1,-1,-1,1,1), & v_8&= \frac{1}{\sqrt{84}}(-5,4,-5,1,1,4). 				
\end{align*}
Let $P_i$ be the boundary of the half-space in $\R^6$ determined by the $i$th inequality above. The faces of the fundamental domain are the following three-dimensional polytopes: In the plane $P_1$ the pentagonal pyramid $v_1v_2v_4v_5v_6v_7$ with vertex $v_4$; in $P_2$ the double pyramid $v_1v_2v_3v_4v_8$ with vertices $v_2$ and $v_8$; in $P_3$ the pentagonal pyramid $v_1v_2v_3v_5v_6v_7$ with vertex $v_3$; in $P_4$ the simplex $v_2v_3v_4v_5$; in $P_5$ the simplex $v_1v_3v_7v_8$; in $P_6$ the double pyramid $v_3v_4v_6v_7v_8$ with vertices $v_6$ and $v_8$; in $P_7$ the simplex $v_3v_4v_5v_6$ and in $P_8$ the simplex $v_1v_4v_7v_8$. The boundary of the fundamental domain is illustrated in Figure \ref{fig:Boundary_fundamental_domain}.

It can be checked that the identifications induced by $G$ on $\partial \Lambda$ are generated by the following identifications

\begin{align*} 
(12)(34) \text{ in }P_1: 	&v_1 \rightleftharpoons v_2,  v_4 \rightleftharpoons v_4,v_6 \rightleftharpoons v_6, v_5 \rightleftharpoons v_7,\\
(13)(45)\text{ in }P_2: 	& v_1 \rightleftharpoons v_3, v_2 \rightleftharpoons v_2, v_4 \rightleftharpoons v_4, v_8 \rightleftharpoons v_8,\\
(12)(56)\text{ in }P_3: 	&v_1 \rightleftharpoons v_2, v_3 \rightleftharpoons v_3,v_5 \rightleftharpoons v_7, v_6 \rightleftharpoons v_6,\\
(15)(23)\text{ in }P_4: 	&v_2 \rightleftharpoons v_3, v_4 \rightleftharpoons v_4, v_5 \rightleftharpoons v_5,\\
(13)(26)\text{ in }P_5: 	&v_1 \rightleftharpoons v_3, v_7 \rightleftharpoons v_7, v_8 \rightleftharpoons v_8,\\
(14)(25)\text{ in }P_6: 	&v_3 \rightleftharpoons v_3, v_4 \rightleftharpoons v_4, v_6 \rightleftharpoons v_8, v_7 \rightleftharpoons v_7,\\ 
(13425):	&v_1 \rightharpoonup v_3, v_4 \rightleftharpoons v_4, v_7 \rightharpoonup v_5, v_8 \rightharpoonup v_6,\\
(15243):	&v_1 \leftharpoondown v_3, v_4 \rightleftharpoons v_4, v_7 \leftharpoondown v_5, v_8 \leftharpoondown v_6,
\end{align*}

The first six of them correspond to ``pasting in half'' the faces of $\Lambda$ lying in the planes $P_1,\ldots ,P_6$. Since points in the interior of such a face are not identified with points outside the interior of this face, we see that the images of these faces in the quotient $Q$ can be collapsed to the images of their boundary in $Q$. In particular, we see that $Q$ collapses onto the images of the faces $v_3v_4v_5v_6$ and $v_1v_4v_7v_8$ of $\Lambda$. Since these faces are identified with each other by $(13425)$, we see that $Q$ collapses onto the image of $v_3v_4v_5v_6$ in $Q$. Examining the list of generating identifications shows that this image is a $3$-simplex itself. Hence, $Q$ is collapsible and thus the claim follows by the remarks above.

\begin{figure}
	\centering
		\includegraphics[width=0.9\textwidth]{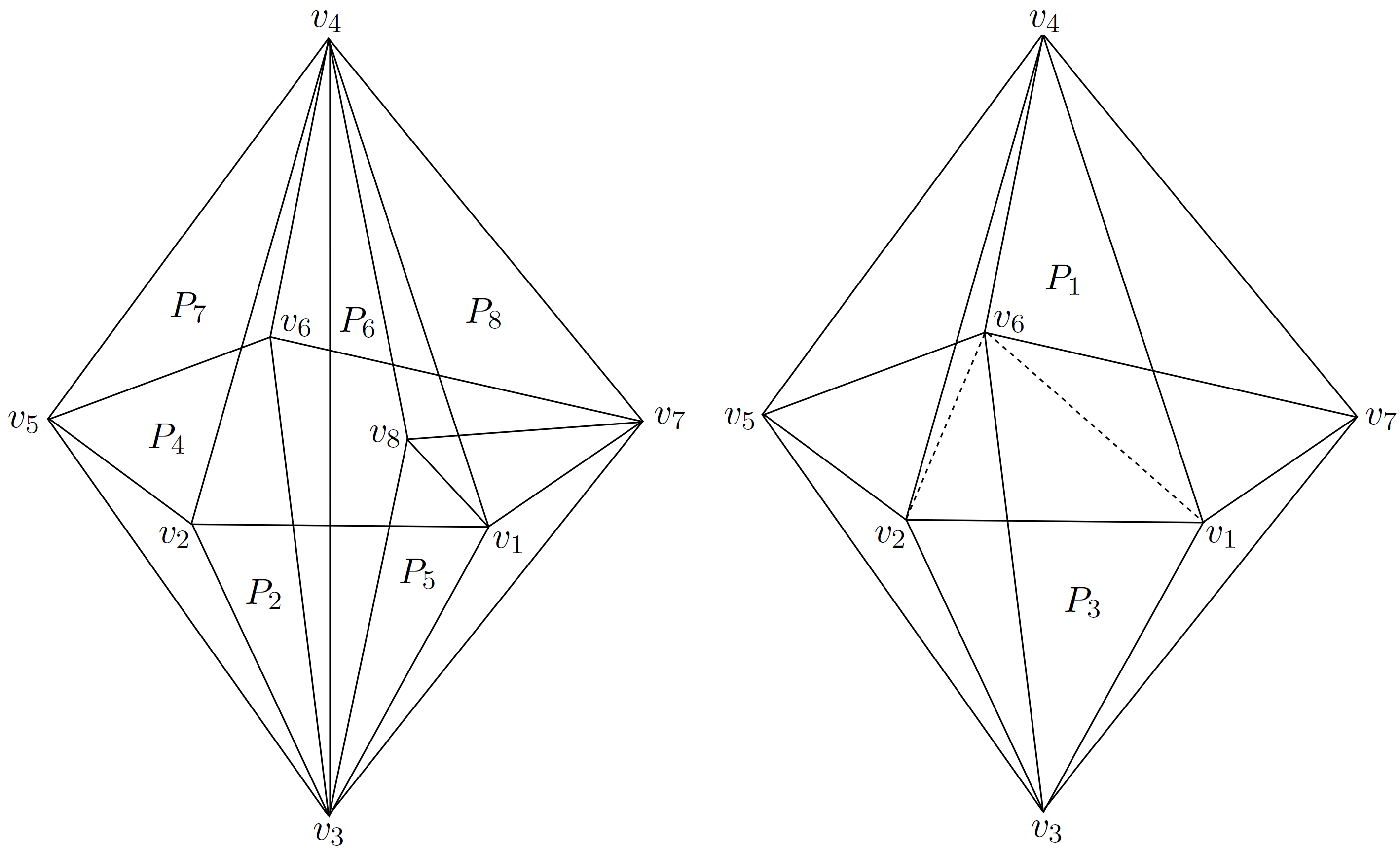}
	\caption{Boundary of a fundamental domain $\Lambda$ for the action of $M(\Ss_5)=R_5(\alt_5)$ on $S^4$ cut into two pieces.}
	\label{fig:Boundary_fundamental_domain}
\end{figure}

\end{proof}

In principle, the proofs in \cite{Mikhailova} in the cases $M(\Ss_6)$, $M(\Qq_7)$ and $M(\Qq_8)$ can be made rigorous in the same way. However, in order to avoid long computations with fundamental domains and identifications, we provide the following alternative argument.

In each of the remaining cases choose an admissiable triangulation of $S^{n-1}$ for the action of $G$. Because of $n>5$ in these cases, it is sufficient to show that $S^{n-1}/G$ is a simply connected PL manifold with $H_*(S^{n-1}/G)=H_*(S^{n-1})$ in order to prove that $\R^n/G$ is piecewise linear homeomorphic to $\R^n$ (cf. Section \ref{sub:poincare}, Theorem \ref{thm:poincare_pl_manifold}). According to \cite[Lem.~27]{LaMik} all isotropy groups of the remaining irreducible exceptional rotation groups $M(\Ss_6)$, $M(\Qq_7)$, $M(\Qq_8)$ and $M(\Tt_8)$ are rotation groups. The same statement is true for reducible rotation groups of type $\Delta_{\varphi}(W\times W)$ (cf. Section \ref{sub:red_pse_gro}), since isotropy groups of real reflection groups are generated by the reflections they contain \cite[Thm. 1.12 (c), p.~22]{MR1066460} (apply this result twice). Hence, we have (recall that the quotient $S^{n-1}/G$ is simply connected by Lemma \ref{lem:simply_con_quotient})

\begin{lem}
Let $G$ be one of the remaining rotation groups and suppose that our main result holds for all rotation groups of smaller order. Then $S^{n-1}/G$ is a simply connected PL manifold.
\end{lem}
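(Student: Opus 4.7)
The plan is to combine Lemma \ref{lem:PL_isotropy_condition} with the induction hypothesis and the already-observed fact that all isotropy groups of the remaining rotation groups are themselves rotation groups. First I would fix an admissible triangulation $K$ of $S^{n-1}$ for the action of $G$ as in Section \ref{sub:ad_triangulations}. By Lemma \ref{lem:PL_isotropy_condition}, showing that $S^{n-1}/G$ is a PL manifold then reduces to verifying, for every vertex $v$ of $K$, that the quotient $T_v S^{n-1}/G_v$ is a PL manifold.

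For each such vertex $v \in S^{n-1}$ the isotropy group $G_v$ is a rotation group: for the irreducible cases $M(\Ss_6), M(\Qq_7), M(\Qq_8), M(\Tt_8)$ this is \cite[Lem.~27]{LaMik}; for the reducible cases $\Delta_\varphi(W\times W)$ with $W$ of type $\Hn_3$ or $\Hn_4$ it follows from two applications of the fact that the isotropy group of a real reflection group is generated by the reflections it contains, using moreover that $\varphi$ sends reflections to reflections so that the rotations of $\Delta_\varphi(W\times W)$ are precisely the elements $(s,\varphi(s))$ with $s$ a reflection of $W$. The induced action of $G_v$ on $T_v S^{n-1}\cong\R^{n-1}$ is again by rotations, since a rotation of $\R^n$ in $G$ fixing $v$ has a codimension-two fixed subspace containing $\R v$ and therefore restricts to a rotation of the tangent hyperplane.

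Since $v$ is nonzero and $G$ admits no nonzero global fixed vector, $G_v$ is a proper subgroup of $G$, hence of strictly smaller order. The induction hypothesis therefore yields that $\R^{n-1}/G_v$ is PL homeomorphic to $\R^{n-1}$, so in particular a PL manifold. Plugging this back into Lemma \ref{lem:PL_isotropy_condition} gives that $S^{n-1}/G$ is a PL manifold. Simple connectedness is then immediate from Lemma \ref{lem:simply_con_quotient}, since $G$ is generated by rotations (each fixing a point of $S^{n-1}$) and $n\geq 6$ in all the remaining cases.

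The only real subtlety I expect lies in the reducible case, where one must justify that the stabilizer in $\Delta_\varphi(W\times W)$ of a point $(v_1,v_2)$ is generated by rotations of $\Delta_\varphi(W\times W)$, i.e.\ by elements $(s,\varphi(s))$ with $s$ a reflection of $W$ satisfying $s\in W_{v_1}$ and $\varphi(s)\in W_{v_2}$. This is exactly what the double application of the reflection-group isotropy theorem, together with $\varphi$ preserving reflections, is designed to deliver; once this is granted, the rest of the argument is a straightforward invocation of the machinery already developed in the paper.
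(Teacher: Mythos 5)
Your argument is correct and is essentially the paper's own: the paper likewise combines Lemma \ref{lem:PL_isotropy_condition} with the induction hypothesis, cites \cite[Lem.~27]{LaMik} for the irreducible exceptional groups, applies the reflection-group isotropy theorem twice for $\Delta_{\varphi}(W\times W)$ (using that $\varphi$ preserves reflections), and invokes Lemma \ref{lem:simply_con_quotient} for simple connectedness. You merely spell out in more detail the points the paper leaves terse (that $G_v$ restricted to $T_vS^{n-1}$ is again a rotation group of smaller order, and the double application of the isotropy theorem in the reducible case).
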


In particular, we have $H_1(S^{n-1}/G)=\pi_1(S^{n-1}/G)_{ab}=0$ \cite[Thm.~2A.1]{MR1867354}. By Poincar\'e duality and the universal coefficient theorem, it is sufficient to show that $H_2(S^{n-1}/G)=0$ or $H_2(S^{n-1}/G)=H_3(S^{n-1}/G)=0$, respectively, depending on whether $n\leq 6$ or $6<n\leq 8$ \cite[Thm.~3.2, Thm.~3.30]{MR1867354}, in order to prove  $H_*(S^{n-1}/G)=H_*(S^{n-1})$. In Lemma \ref{lem:homology_condition} we have seen that the existence of subgroups $H<G$ with coprime indices and $H_i(S^{n-1}/H)=0$ implies $H_i(S^{n-1}/G)=0$. The latter condition in particular holds for rotation groups $H$ for which we have already shown that $\R^n/H$ is homeomorphic to $\R^n$ (cf. the long exact sequence in \cite[p.~117]{MR1867354}). Hence, in order to treat the remaining cases, it is sufficient to find suitable subgroups.

\begin{lem}\label{lem:S_6}
If our main result holds for all rotation groups of smaller order than  $M(\Ss_6)$, then it also holds for  $M(\Ss_6)$.
\end{lem}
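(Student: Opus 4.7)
The plan is to follow exactly the strategy laid out in the paragraph preceding the lemma and conclude by invoking the PL generalized Poincar\'e conjecture. Since $M(\Ss_6)\subset\SOr_6$, we are in the case $n=6>5$, and Theorem \ref{thm:poincare_pl_manifold} is available. The lemma immediately before tells us that, once an admissible triangulation is chosen, $S^5/G$ is a simply connected closed PL manifold. Hence it remains to verify the integral homology equality $H_*(S^5/G;\Z)=H_*(S^5;\Z)$; granting this, Theorem \ref{thm:poincare_pl_manifold} identifies $S^5/G$ with the standard PL $5$-sphere, and coning over the origin yields a PL homeomorphism $\R^6/G\to\R^6$.

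Orientability of $S^5/G$ (inherited from the fact that $G<\SOr_6$ acts orientation-preservingly and the quotient is a PL manifold) allows Poincar\'e duality with $\Z$ coefficients. Combined with the universal coefficient theorem, the homology computation reduces, in dimension $n=6$, to the single vanishing
\[
 H_2(S^5/G;\Z)=0.
\]
Indeed, simple connectivity gives $H_1=0$, and then duality yields $H_4\cong H^1=0$; also $H_3\cong H^2\cong\mathrm{Hom}(H_2,\Z)$, so $H_3$ vanishes as soon as $H_2$ does.

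To produce the vanishing of $H_2$, I would apply Lemma \ref{lem:homology_condition}: it suffices to exhibit two subgroups $H_1,H_2<G$ of coprime indices that are themselves rotation groups. For each such $H_i$ the induction hypothesis (the main theorem for rotation groups of order strictly less than $|G|$) yields that $\R^6/H_i$ is PL homeomorphic to $\R^6$, hence $H_*(S^5/H_i;\Z)=H_*(S^5;\Z)$ and in particular $H_2(S^5/H_i;\Z)=0$. Lemma \ref{lem:homology_condition} then forces $H_2(S^5/G;\Z)=0$.

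The substantive obstacle is thus concrete and group-theoretic: one must inspect the structure of the exceptional rotation group $M(\Ss_6)$ as described in \cite{LaMik} and exhibit two rotation subgroups of coprime indices. A natural attempt is to take $H_1$ a subgroup of $2$-power index (for instance the kernel of a permutation action of $G$ on a small set of invariant axes, or the normal rotation subgroup generated by some conjugacy class of rotations) together with $H_2$ a subgroup of odd index (for instance a Sylow $2$-subgroup, provided it is generated by rotations, or a rotation subgroup of index a power of an odd prime). The key points to check are that the candidate subgroups (a) are actually generated by rotations and so fall under the induction hypothesis, and (b) have indices that are coprime in $G$. Once such a pair is exhibited, the argument concludes as above via Lemma \ref{lem:homology_condition}, Poincar\'e duality, and Theorem \ref{thm:poincare_pl_manifold}.
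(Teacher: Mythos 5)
Your overall framework (simple connectivity from the preceding lemma, reduction via Poincar\'e duality and universal coefficients to $H_2(S^5/G;\Z)=0$, then Theorem \ref{thm:poincare_pl_manifold}) matches the paper. The gap is in the one substantive step you defer: you propose to exhibit two \emph{rotation} subgroups of $M(\Ss_6)$ with coprime indices and invoke the induction hypothesis for both. This cannot work. The group $M(\Ss_6)\cong\PSL_2(7)$ has order $168=2^3\cdot 3\cdot 7$, so of any two proper subgroups with coprime indices at least one must have index prime to $7$ and hence order divisible by $7$; the only proper subgroups of $\PSL_2(7)$ of order divisible by $7$ are the Sylow $7$-subgroups $\cyc_7$ and their normalizers of order $21$. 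In the realization of $M(\Ss_6)$ as a permutation group on $7$ letters restricted to the sum-zero hyperplane $\R^6\subset\R^7$, the only elements acting as rotations are the involutions (elements of order $3$, $4$ and $7$ have fixed spaces of codimension $4$, $4$ and $6$ respectively), and the subgroups of order $7$ and $21$ contain no involutions at all. So neither is a rotation group, and no pair of proper rotation subgroups with coprime indices exists; your Sylow $2$-subgroup and ``kernel of a permutation action'' candidates do not produce one either.

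The repair, which is what the paper does, exploits that Lemma \ref{lem:homology_condition} does not require the subgroups to be rotation groups, only that $H_2(S^{5}/H;\Z)=0$. Take $H=\cyc_7$ generated by a $7$-cycle: it acts freely on $S^5$, so $H_2(S^5/H)\cong H_2(\cyc_7)=0$ by the standard computation of the homology of cyclic groups acting freely on spheres. Pair this with the rotation subgroup of order $24=2^3\cdot 3$ (index $7$) described in \cite[Lem.~28]{LaMik}, to which the induction hypothesis does apply and which therefore also has vanishing $H_2$ of its quotient. The indices $24$ and $7$ are coprime, so Lemma \ref{lem:homology_condition} yields $H_2(S^5/G)=0$, and the rest of your argument goes through as written.
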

\begin{proof} The group $M(\Ss_6)\cong \mathrm{PSL}_2(7)$ has order $2^{3}\cdot3\cdot 7$ and thus contains a subgroup $H$ of order $7$. Since $M(\Ss_6)$ can be realized as the isomorphic image of a permutation group in $S_7<\SOr_7$ to $\SOr_6$ (cf. \cite[Lem.~16]{LaMik}), this subgroup of order $7$ is generated by a $7$-cycle in $S_7$ and acts thus freely on the unit sphere $S^5\subset\R^6$. The facts that $H_2(\Z_7)=0$ \cite[(3.1), p.~35]{MR1324339} and that $H_2(S^5/H)=H_2(H)$ for groups acting freely on $S^5$ \cite[p.~20]{MR1324339} imply that $H_2(S^5/H)=0$. A rotation group of order $2^{3}\cdot 3$ contained in $M(\Ss_6)$ is described in \cite[Lem.~28]{LaMik}. Hence, the claim follows by the remarks above.
\end{proof}

The reflection groups $W(\Hn_3)$ and $W(\Hn_4)$ properly contain reflection groups with coprime indices (cf. \cite[Table 8, Table 9]{MR3169410}) and thus the rotation groups $\Delta_{\varphi}(W\times W)$ for $W$ of type $\Hn_3$ and $\Hn_4$ properly contain rotation groups with coprime indices. The rotation groups $M(\Qq_7)$ and $M(\Tt_8)$ also properly contain rotation groups with coprime indices \cite[Lem.~29, Lem.~31]{LaMik}. By the remarks above, we obtain
\begin{lem} \label{lem:nurnochQ7}
Let $G$ be a rotation group of type $M(\Qq_7)$, $M(\Tt_8)$ or $\Delta_{\varphi}(W\times W)$ for $W$ of type $\Hn_3$ or $\Hn_4$. If our main result holds for all rotation groups of smaller orden than $G$, then it also holds for $G$.
\end{lem}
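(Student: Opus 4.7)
The plan is to combine the previous lemma (which already shows that $S^{n-1}/G$ is a simply connected PL manifold under the inductive hypothesis) with the homology criterion of Lemma \ref{lem:homology_condition} and the PL version of the generalized Poincar\'e conjecture (Theorem \ref{thm:poincare_pl_manifold}). Since the dimensions involved are $n=7$ for $M(\Qq_7)$, $n=8$ for $M(\Tt_8)$, and $n=6$ or $n=8$ for $\Delta_{\varphi}(W\times W)$ with $W$ of type $\Hn_3$ or $\Hn_4$ respectively, all satisfy $n\neq 4$ and so Theorem \ref{thm:poincare_pl_manifold} will apply once we know $S^{n-1}/G$ has the homology of a sphere.

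By Poincar\'e duality together with the universal coefficient theorem, and using that $S^{n-1}/G$ is already known to be simply connected (so $H_1=0$), it suffices to verify $H_2(S^{n-1}/G;\Z)=0$ when $n\leq 6$ and additionally $H_3(S^{n-1}/G;\Z)=0$ when $6<n\leq 8$. By Lemma \ref{lem:homology_condition}, each of these vanishing statements follows from the existence of a collection of subgroups $H_j<G$ whose indices $[G:H_j]$ have greatest common divisor $1$ and for which $H_i(S^{n-1}/H_j;\Z)=0$ in the relevant degree. The inductive hypothesis guarantees that any proper rotation subgroup $H<G$ satisfies $\R^n/H\cong_{\mathrm{PL}}\R^n$; in particular $S^{n-1}/H$ has the integral homology of $S^{n-1}$ and thus $H_i(S^{n-1}/H;\Z)=0$ for $0<i<n-1$. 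Hence it is enough to exhibit, inside each of the four groups in question, a family of proper rotation subgroups of coprime indices.

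For $M(\Qq_7)$ and $M(\Tt_8)$ such families are recorded directly in \cite[Lem.~29, Lem.~31]{LaMik}. For $\Delta_{\varphi}(W\times W)$ with $W$ of type $\Hn_3$ or $\Hn_4$, the tables \cite[Table 8, Table 9]{MR3169410} list reflection subgroups $W_1,\ldots,W_k<W$ whose indices in $W$ are coprime; pulling back along the diagonal (or the $\varphi$-twisted diagonal) yields subgroups $\Delta_{\varphi}(W_j\times W_j)<\Delta_{\varphi}(W\times W)$ with the same indices, and these are rotation groups of strictly smaller order. With these subgroup families in hand the hypotheses of Lemma \ref{lem:homology_condition} are satisfied in the required degrees, giving $H_*(S^{n-1}/G;\Z)=H_*(S^{n-1};\Z)$.

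Feeding this into Theorem \ref{thm:poincare_pl_manifold} yields that $S^{n-1}/G$ is PL homeomorphic to the standard PL $(n-1)$-sphere, so, by coning and the discussion at the end of Section \ref{sub:pl_structure}, $\R^n/G$ is PL homeomorphic to $\R^n$, which is exactly the conclusion of the main theorem for a rotation group $G$ (no reflections, empty boundary). The only point requiring genuine care is verifying that the cited subgroup lists really do yield rotation subgroups of coprime indices in the $\Delta_{\varphi}$-cases; this is the step where one must be sure that the reflection-subgroup data for $W$ transfers cleanly to the twisted diagonal, but since the index of $\Delta_{\varphi}(W_j\times W_j)$ in $\Delta_{\varphi}(W\times W)$ equals $[W:W_j]$, this is a routine check rather than a real obstacle.
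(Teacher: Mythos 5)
Your proposal is correct and follows essentially the same route as the paper: the paper likewise invokes the preceding lemma for simple connectivity, reduces via Poincar\'e duality and universal coefficients to the vanishing of $H_2$ (and $H_3$ for $n=7,8$), and then applies Lemma \ref{lem:homology_condition} using the coprime-index rotation subgroups from \cite[Lem.~29, Lem.~31]{LaMik} and the reflection-subgroup tables of \cite{MR3169410} pulled back to the twisted diagonal. The only difference is that you spell out details (e.g.\ that $[\Delta_{\varphi}(W\times W):\Delta_{\varphi}(W_j\times W_j)]=[W:W_j]$) that the paper leaves implicit.
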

The group $M(\Qq_7)$ does not contain rotation groups with coprime indices. However, the following lemma is proven in \cite[Lem.~30]{LaMik}.
\begin{lem}
The rotation group $M(\Qq_8)$ of order $21504=2^{10}\cdot3\cdot 7$ contains a reducible rotation group $G$ of order $1536=2^{9}\cdot3$ with $k=2$ and
\[
	(G_i,H_i,F_i,G_i/H_i)=(W(\D_4),D(\Wp(\D_4)),W(\D_4),W(\A_3)),
\]
$i=1,2$ (see below for an explanation of this notation), which is normalized by an element $\tau$ of order two that interchanges the two components of $G$. Moreover, it contains a rotation group $M(\Ss_6)$ of order $168=2^{3}\cdot3\cdot 7$.
\end{lem}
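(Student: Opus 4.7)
\emph{Overall strategy.} I would work from the monomial realization of $M(\Qq_8)$ in $\SOr_8$ provided by the classification in \cite{LaMik}, namely
\[
M(\Qq_8)=D^+(8)\rtimes H,\qquad H<\alt_8,\ H\cong\mathrm{PSL}_2(7),
\]
with $H$ acting doubly transitively on the eight coordinate lines via its standard action on $\mathbb{P}^1(\mathbb{F}_7)$, so that $|M(\Qq_8)|=2^7\cdot 168=21504$. Both subgroups are then to be located inside this model.

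\emph{The reducible subgroup $G$ and the involution $\tau$.} The group $\mathrm{PSL}_2(7)\cong\mathrm{PSL}_3(\mathbb{F}_2)$ has an exceptional orbit of seven $4{+}4$ partitions of $\mathbb{P}^1(\mathbb{F}_7)$, each stabilized by an $\sym_4$ coming from the Fano plane incidence structure. Choose one such partition $\{A,B\}$, let $V_1,V_2\subset\R^8$ be the corresponding coordinate $\R^4$'s, and let $H_0<H$ be its stabilizer, acting as $\sym_4=W(\A_3)$ on each block in the same way. I would set
\[
G=\bigl(D(\Wp(\D_4))\times D(\Wp(\D_4))\bigr)\rtimes H_0 < M(\Qq_8),
\]
where each $D(\Wp(\D_4))$ consists of the even sign changes inside one $V_i$. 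Then $G$ is reducible with invariant components $V_1,V_2$, projects to $W(\D_4)=D(W(\D_4))\rtimes\sym_4$ on each $V_i$, has one-sided kernels $H_i=D(\Wp(\D_4))$, and $|G|=8\cdot 8\cdot 24=1536$, matching the prescribed data. For $\tau$, I would take an involution in $H$ whose fixed-point-free matching on the eight points pairs each element of $A$ with an element of $B$; such an involution swaps $V_1$ and $V_2$, lies in $\alt_8$ (being a product of four transpositions), normalizes $G$, and has order two.

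\emph{The $M(\Ss_6)$ subgroup and the main obstacle.} Since $M(\Ss_6)$ is a rotation group on $\R^6$ of order $168$, its copy inside $M(\Qq_8)<\SOr_8$ must act on $\R^8$ with a $2$-dimensional fixed subspace — that way $\R^8/M(\Ss_6)$ is PL homeomorphic to $\R^2\times\R^6/M(\Ss_6)\cong\R^8$ by the inductive hypothesis applied to $M(\Ss_6)$, which is precisely what is needed in order to apply Lemma \ref{lem:homology_condition}. The obvious complement $H\cong\mathrm{PSL}_2(7)$ in $M(\Qq_8)=D^+(8)\rtimes H$ has only a $1$-dimensional fixed subspace (the diagonal line), so the correct copy of $\mathrm{PSL}_2(7)$ inside $M(\Qq_8)$ must be a twist of $H$ by a suitable $1$-cocycle $H\to D^+(8)$ that promotes the fixed subspace from dimension $1$ to dimension $2$. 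The main obstacle is exhibiting this twist explicitly and verifying that the restriction of the twisted $8$-dimensional representation to its $6$-dimensional invariant complement is, up to $\SOr_6$-conjugacy, the standard realization of $M(\Ss_6)$; this is a representation-theoretic computation whose outcome is constrained by the irreducible real representations of $\mathrm{PSL}_2(7)$ of dimension at most $8$ and by the $H$-module structure of $D^+(8)$.
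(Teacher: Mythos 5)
The paper does not actually prove this lemma: it is imported verbatim from \cite[Lem.~30]{LaMik} ("the following lemma is proven in \ldots"). So your attempt has to stand on its own, and as written it has several genuine defects. The most basic one is the starting model $M(\Qq_8)=D^+(8)\rtimes H$ with $H\cong\PSL_2(7)$ a \emph{pure permutation} complement acting on $\mathbb{P}^1(\mathbb{F}_7)$. Such a group is not a rotation group at all: the nontrivial elements of $\PSL_2(7)$ on $8$ points have cycle types $2^4$, $3^21^2$, $4^2$, $7\cdot 1$, and for any $\epsilon\in D^+(8)$ the fixed space of $\epsilon\sigma$ is bounded by one dimension per cycle of $\sigma$, hence has dimension at most $4$; so the only rotations in this group lie in $D^+(8)$ and generate only $D^+(8)$. (Equivalently: a pure permutation $\PSL_2(7)<\alt_8$ decomposes $\R^8$ as $1\oplus 7$, which is also why it can never contain $M(\Ss_6)=R_6(\PSL_2(7))$ — the $6$-dimensional irreducible must appear. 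Your own observation about the $1$-dimensional fixed subspace is really a proof that the split pure-permutation model cannot be $M(\Qq_8)$, not merely that a cocycle twist is convenient.)

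The construction of $G$ fails even inside that model. Since point stabilizers in $\PSL_2(7)$ have odd order $21$, the point stabilizer of any $\sym_4<\PSL_2(7)$ has order dividing $3$, so every $\sym_4$ acts \emph{transitively} on the $8$ points; the $4{+}4$ partition it preserves is a block system, the block-wise stabilizer is only the $\alt_4$, and the odd elements of $\sym_4$ interchange $A$ and $B$. Hence there is no $H_0\cong\sym_4$ in $\PSL_2(7)$ "acting as $\sym_4$ on each block in the same way", your semidirect product is not reducible with components $V_1,V_2$, and the order count $8\cdot 8\cdot 24$ and the claimed projections $G_i=W(\D_4)$ (which contain transpositions that cannot arise from this permutation action) both collapse. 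Finally, the second assertion of the lemma — the existence of $M(\Ss_6)$ inside $M(\Qq_8)$ — is explicitly left unproved in your write-up ("the main obstacle"), so even granting the first half you would not have established the statement. A correct argument has to start from the actual monomial realization of $M^p_8$ in \cite{LaMik}, in which the complement over $\PSL_2(7)$ is itself built from signed permutations; the clean route here is simply to cite \cite[Lem.~30]{LaMik}, as the paper does.
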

More precisely, with respect to the action of the group $G<\SOr_8$ in the preceding lemma $\R^8$ splits into two four-dimensional irreducible subspaces $\R^8=V_1\oplus V_2$. The subgroup $H_i$ of $G$ generated by rotations that only act in $V_i$ is of type $D(\Wp(\D_4))$. The quotient of $G$ by $H=H_1 \times H_2$ is abstractly isomorphic to $W(\A_3)\cong \sym_4$ and its linearization on $\R^8$ acts as a diagonal group $\Delta_{\varphi}(W\times W)$ for $W$ of type $\A_3$ (cf. Section \ref{sub:red_pse_gro}).
\begin{lem}
For $N=\left\langle G, \tau \right\rangle$ as in the preceding lemma we have $\R^8/N\cong C(S^3 * \R\mathbb{P}^3)$ and hence $H_*(S^7/N)=H_*(\Sigma^4(\R\mathbb{P}^3))$. In particular, $H_2(S^7/N)=H_3(S^7/N)=0$.
\end{lem}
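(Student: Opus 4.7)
My plan is to identify $\R^8/N$ by applying the PL linearization principle twice and then computing the final quotient directly.

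First, I would linearize the normal subgroup $H=H_1\times H_2\triangleleft N$. Since each $H_i=D^+(4)$ is normal in $W(\D_4)\subset W(\BC_4)$, Lemma~\ref{lem:line_refl} provides a PL homeomorphism $V_i/H_i\to\R^4$ intertwining $G_i=W(\D_4)$ with the action of $G_i/H_i=\sym_4$ by coordinate permutations; since $\tau$ swaps $V_1$ and $V_2$, it descends to a swap between the two linearized copies. Because $\mathrm{Out}(\sym_4)=1$, the isomorphism $\varphi$ defining $G/H=\Delta_\varphi(\sym_4\times\sym_4)$ is inner, and it must be an involution since $\tau$ has order two and conjugates this diagonal to itself. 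An orthogonal change of basis on the second factor then arranges $\varphi=\mathrm{id}$, and composing $\tau$ with a suitable element of the diagonal $\sym_4$ allows me to take the residual $\tau$ to be the pure swap of the two $\R^4$'s. The problem is thus reduced to computing $\R^8/(N/H)$, where $N/H=\sym_4\times\langle\tau\rangle$ acts on $\R^4\oplus\R^4$ with $\sym_4$ diagonal and $\tau$ the pure swap.

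Second, I would perform the orthogonal change of basis $u=(v_1+v_2)/\sqrt{2}$, $w=(v_1-v_2)/\sqrt{2}$, which preserves the diagonal $\sym_4$-action and turns $\tau$ into $(u,w)\mapsto(u,-w)$. Identifying $\R^8=\Co^4$ via $z_j=u_j+iw_j$, the action becomes: $\sym_4$ acts as the unitary reflection group permuting the coordinates of $\Co^4$, and $\tau$ acts as coordinate-wise complex conjugation. Lemma~\ref{lem:main_unit} (Chevalley) then yields a PL homeomorphism $\Co^4/\sym_4\cong\Co^4$ via the elementary symmetric polynomials $(e_1,\ldots,e_4)$. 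Because each $e_k$ has real coefficients, $\tau$ descends to coordinate-wise conjugation $e_k\mapsto\bar e_k$ on the target $\Co^4$, which is an orthogonal involution, so the PL linearization principle applies once more.

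The remaining computation is direct: writing $e_k=r_k+is_k$, the final $\tau$-action on $\Co^4\cong\R^4_r\oplus\R^4_s$ is the single involution $(r,s)\mapsto(r,-s)$, whose quotient equals $\R^4_r\times(\R^4_s/\{\pm1\})=\R^4\times C(\R\mathbb{P}^3)$. Applying the standard homeomorphism $C(X)\times C(Y)\cong C(X*Y)$ with $X=S^3$, $Y=\R\mathbb{P}^3$, and using $\R^4=C(S^3)$, this equals $C(S^3*\R\mathbb{P}^3)$, as required. The main technical obstacle I anticipate is verifying PL compatibility at each linearization stage, i.e.\ confirming that $\tau$ descends to the claimed orthogonal action on the successively linearized models; this reduces to $\tau$ commuting with $\sym_4$ together with the reality of the elementary symmetric polynomials.
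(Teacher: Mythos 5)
Your proposal follows essentially the same route as the paper: linearize $H_1\times H_2$, use $\mathrm{Out}(\sym_4)=1$ to normalize the diagonal copy of $\sym_4$, pass to $\Co^4$ and linearize that diagonal via the elementary symmetric polynomials, and identify the quotient by the residual orthogonal involution as $\R^4\times C(\R\mathbb{P}^3)\cong C(S^3*\R\mathbb{P}^3)$ --- the paper merely uses the complex structure $z=v_1+iv_2$ (so the swap becomes $z\mapsto i g_0\bar z$ and descends to $(i\bar s_1,-\bar s_2,-i\bar s_3,-\bar s_4)$) instead of your $(u,w)$-coordinates, which is cosmetic. The one step you justify too quickly is reducing $\tau$ to the pure swap: writing $\sigma$ with off-diagonal blocks $g_0$, $g_0^{-1}$, the identity $\tau^2=\mathrm{id}$ holds for any $g_0$ and gives you nothing, and what is actually needed is $g_0=g_0^{-1}$ with $g_0\in W(\A_3)$, which the paper extracts from $\sigma$ normalizing the diagonal (so $g_0^2\in C_{\Or_4}(W(\A_3))$) together with the triviality of the center of $W(\A_3)$.
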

\begin{proof} Let $\R^8=V_1+V_2$ be a decomposition into irreducible components with respect to $G$. We can assume that $V_1$ and $V_2$ are two identical copies of $\R^4$ such that the projections of $G$ to $V_1$ and $V_2$ coincide. Since $\tau$ normalizes $G$ it has the form
\[
	\begin{array}{ccl}
		\tau  = \left(
		  \begin{array}{cc}
		    0 & h \\
		    h^{-1} & 0 \\
		  \end{array}
		\right)
  \end{array}
\]
for some $h\in N_{\Or_4}(W(\D_4))$. After conjugation we can assume that $h=\mathrm{id}$ and thus we can apply the linearization principle to the groups $H_1\times H_2\triangleleft N$. It then remains to show that $\R^8/\overline{N}\cong C(S^3 * \R\mathbb{P}^3)$ for $\overline{N}=\left\langle \overline{G}, \sigma \right\rangle$ with $\sigma=\overline{\tau}$ and $\overline{G}=G_{\varphi}=\{(g,\varphi(g))| g \in W(\A_3)\}\subset  W(\A_3)\times W(\A_3) <\Or_4\times \Or_4$ for some $\varphi\in \mathrm{Aut}(W(\A_3))$. Since the symmetric group $\sym_4$ has no outer automorphisms we can assume that $\varphi=\mathrm{id}$ after conjugation. Then $\sigma$ has the form
\[
	\begin{array}{ccl}
		\sigma  = \left(
		  \begin{array}{cc}
		    0 & g_0 \\
		    g_0^{-1} & 0 \\
		  \end{array}
		\right)<\SOr_8
  \end{array}
\]
for some $g_0 \in W(\A_3) < \Or_4$. Now $\sigma \in N(\overline{G})$ implies $g_0^2 \in C_{\Or_4}(W(\A_3))=\{\pm \mathrm{id}_3 \}\times \{ \pm \mathrm{id}_1\} < \Or_3 \times \Or_1$. Because of $g_0 \in W(\A_3)$ and since $W(\A_3)$ has a trivial center, we have $g_0^2= 1$, i.e. $g_0=g_0^{-1}$. If we identify $\R^8$ with $\Co^4$ then $\overline{G}$ can be regarded as a unitary reflection group and the action of $\sigma$ is given by $\sigma\left((z_1,z_2,z_3,z_4)\right)=i\cdot g_0 (\overline{z}_1,\overline{z}_2,\overline{z}_3,\overline{z}_4)$ where $g_0$ permutes the coordinates. Let $s_i$ be the elementary symmetric polynomial of degree $i$ in the $z_j$, $i,j=1,2,3,4$. Then the map 
\[
		 \begin{array}{cccl}
		 f : & \Co^4/\overline{G} 				& \To  	&	\Co^4 \\
		           		& (z_1,z_2,z_3,z_4) 	& \MTo  & (s_1,s_2,s_3,s_4)
		 \end{array}
\]
defines a homeomorphism as explained in Section \ref{sub:alg_inv}. Since the $s_i$ are invariant under coordinate permutations, the induced action of $\overline{N}/\overline{G}$ on $\Co^4$ is given by 
\[
		\overline{\sigma}(s_1,s_2,s_3,s_4)=(i\cdot\overline{s}_1,-\overline{s}_2,-i\cdot\overline{s}_3,-\overline{s}_4).
\]
It follows that $\R^8/N\cong C(S^3 * \R\mathbb{P}^3)$. In particular, we have $H_2(S^7/N)=H_3(S^7/N)=0$ by the long exact sequence in \cite[p.~117]{MR1867354}.
\end{proof}
With the preceding two lemmas and the remarks above we obtain
\begin{lem} \label{lemQ_8}
If our main result holds for all rotation groups of smaller order than $M(\Qq_8)$, then it also holds for $M(\Qq_8)$.
\end{lem}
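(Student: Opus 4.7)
The plan is to apply the approach of Section \ref{sub:exceptional_groups_hom}: verify that $S^7/M(\Qq_8)$ is a closed simply connected PL $7$-manifold with $H_{\ast}(S^7/M(\Qq_8);\Z)=H_{\ast}(S^7;\Z)$, and then invoke Theorem \ref{thm:poincare_pl_manifold} (applicable since $7\neq 4$) to conclude that it is PL homeomorphic to the standard PL $7$-sphere. Taking open cones will then identify $\R^8/M(\Qq_8)$ with $\R^8$ in the PL category.

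The essential ingredient is a pair of subgroups of $M(\Qq_8)$ of coprime index, and the two preceding lemmas supply exactly such a pair. The subgroup $N=\left\langle G,\tau\right\rangle$ has order $2\cdot 1536=3072$, hence index $7$ in $M(\Qq_8)$, while $M(\Ss_6)\subset M(\Qq_8)$ has order $168$ and therefore index $128=2^7$; the two indices are coprime. For $S^7/N$, the preceding lemma identifies $\R^8/N$ with $C(S^3 * \R\mathbb{P}^3)$, which yields $H_2(S^7/N)=H_3(S^7/N)=0$. For $S^7/M(\Ss_6)$ I would invoke the induction hypothesis: since $|M(\Ss_6)|<|M(\Qq_8)|$, the main theorem holds for $M(\Ss_6)$, so $\R^8/M(\Ss_6)$ is PL homeomorphic to $\R^8$, forcing the reduced homology of $S^7/M(\Ss_6)$ to vanish in all degrees below $7$. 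Lemma \ref{lem:homology_condition} applied in degrees $2$ and $3$ to these two coprime-index subgroups then gives $H_2(S^7/M(\Qq_8))=H_3(S^7/M(\Qq_8))=0$.

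It remains to verify that $S^7/M(\Qq_8)$ is a closed simply connected PL $7$-manifold and to upgrade this partial vanishing to the full homology of $S^7$. Simple connectedness is immediate from Lemma \ref{lem:simply_con_quotient}. By \cite[Lem.~27]{LaMik}, every isotropy group of $M(\Qq_8)$ is a rotation group of strictly smaller order, so the induction hypothesis together with Lemma \ref{lem:PL_isotropy_condition} yields the PL manifold property with respect to an admissible PL structure. Since $M(\Qq_8)\subset\SOr_8$, the quotient is orientable, and combining the vanishing of $H_2$ and $H_3$ with Poincar\'e duality and the universal coefficient theorem then gives $H_{\ast}(S^7/M(\Qq_8);\Z)=H_{\ast}(S^7;\Z)$. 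I do not anticipate any real obstacle here: the only nontrivial step is the assembly of two coprime-index subgroups with controllable middle-dimensional homology, and this has already been arranged by the two lemmas immediately preceding.
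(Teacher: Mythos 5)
Your proposal is correct and follows exactly the paper's argument: the paper derives Lemma \ref{lemQ_8} from ``the preceding two lemmas and the remarks above,'' which is precisely the assembly you carry out --- the coprime-index subgroups $N=\left\langle G,\tau\right\rangle$ (index $7$) and $M(\Ss_6)$ (index $2^7$), Lemma \ref{lem:homology_condition} in degrees $2$ and $3$, simple connectedness via Lemma \ref{lem:simply_con_quotient}, the PL manifold property via isotropy groups and Lemma \ref{lem:PL_isotropy_condition}, Poincar\'e duality and the universal coefficient theorem, and finally Theorem \ref{thm:poincare_pl_manifold}. No discrepancies to report.
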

Since we have by now treated all cases, our main result follows by induction.

\section{Towards a classification free proof}
\label{ch:outlook}
As a corollary of our main result reflection-rotation groups share the following properties $(i)-(iii)$ (cf. Lemma \ref{lem:PL_isotropy_condition}). Property $(i)$ generalizes an isotropy theorem by Steinberg on unitary reflection groups \cite[Thm. 1.5, p.~394]{MR0167535} (cf. \cite[Ch. V, Exercise 8]{MR1080964} and \cite{MR2052515} for alternative proofs by Bourbaki and Lehrer).
\begin{enumerate}
\item Isotropy groups of reflection-rotation groups are generated by the reflections and rotations they contain.
\item For a rotation group $G<\SOr_n$ we have\[H_*(S^{n-1}/G;\Z)=H_*(S^{n-1};\Z).\]
\item For a reflection-rotation group $G<\Or_n$ that contains a reflection we have \[H_*(S^{n-1}/G;\Z)=H_*(\{*\};\Z).\]
\end{enumerate}
Conversely, these properties together with the PL h-cobordism theorem and some extra work in low dimensions imply the if direction of our result by induction as explained in Section \ref{sub:poincare}. Hence, in order to essentially dispense with the classification of reflection-rotation groups in the proof of our result one might first try to find conceptual proofs for properties $(i)$, $(ii)$ and $(iii)$ that do not depend on a classification of reflection-rotation groups.

\setcounter{secnumdepth}{-2}
\renewcommand\thesection{\Alph{section}}

\setcounter{secnumdepth}{1}
\setcounter{section}{0}
\section{Appendix}

\subsection{Classification of rotation-reflection groups}
\label{app:class-rr-groups}

We provide the classification result from \cite{LaMik} omitting details that are not needed in the present paper. For more detailed descriptions we refer to the respective sections above and to \cite{LaMik}.

\begin{thm}\label{thm:clas_red_psg}
Every irreducible rotation group occurs, up to conjugation, in precisely one of the following cases.
\begin{compactenum}
\item Orientation preserving subgroups $\Wp$ of irreducible real reflection groups $W$ (Lem. \ref{lem:main_refl}).
\item Irreducible unitary reflection groups that are not the complexification of a real reflection group considered as real groups (Lem. \ref{lem:main_unit}).
\item The imprimitive rotation groups $\Gs(km,k,l)\subgr\SOr_{n}$ for $n=2l>4$, $k\in\{1,2\}$ and $km\geq3$ (Lem. \ref{lem:imprimi}).
\item The unique extensions $\Ws$ of $\Wp$ by a normalizing rotation if such exists (Lem. \ref{lem:main_refl}).
\item The following rotation groups $M$ of type $\Pp_5$, $\Pp_6$, $\Pp_7$, $\Pp_8$, $\Qq_7$, $\Qq_8$, $\Rr_5$, $\Ss_6$ and $\Tt_8$. More precisely (in the same order),
\begin{compactenum}
\item monomial rotation groups $M_5$, $M_6$, $M_7$, $M_8$ (Lem. \ref{lem:monom}) and $M^p_7$ (Lem. \ref{lem:nurnochQ7}) and $M^p_8$ (Lem. \ref{lemQ_8}).
\item primitive rotation groups $R_5(\alt_5)$ (Lem. \ref{lem:R_5}) and $R_6(\PSL_2(7))$ (Lem. \ref{lem:S_6}) given as the image of the unique irreducible representations of $\alt_5$ in $\SOr_5$ and of $\PSL_2(7)$ in $\SOr_6$,
\item a primitive rotation group in $\SOr_8$ isomorphic to an extension of the alternating group $\alt_8$ by a nonabelian group of order $2^8$ (Lem. \ref{lem:nurnochQ7}).
\end{compactenum}
\item Other examples in dimension $4$ (Lem. \ref{lem:main_uptofour_sum}).
\end{compactenum}
The references in the parentheses indicate where the respective group is treated in view of our main result.
\end{thm}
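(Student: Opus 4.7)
The plan is to analyze an arbitrary irreducible rotation group $G<\SOr_n$ via the standard primitive/imprimitive dichotomy and to match it against the six candidate classes by leveraging the classification of irreducible complex reflection groups together with the description of rotations of the ``first'' and ``second'' kind from \cite{LaMik}.

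First I would check whether $G$ acts imprimitively on $\R^n$. In that case a system of imprimitivity $\R^n = V_1 \oplus \cdots \oplus V_l$ induces a permutation action of $G$ on $\{V_1,\ldots,V_l\}$; let $K\triangleleft G$ be the kernel of this action. Each rotation $g\in G$ has two-dimensional moved subspace, so $g$ either lies in a single component $V_i$ (forcing $\dim V_i\geq 2$) or swaps two two-dimensional components while fixing the remainder (forcing $\dim V_i=2$). Combining these structural constraints with irreducibility, I would show that either all $V_i$ are two-dimensional and $G$ is conjugate to some $\Gs(km,k,l)$ from family (iii), or else the rotations of the second kind force a rigid monomial structure that collapses to the finitely many exceptional monomial groups $M_5, M_6, M_7, M_8, M^p_7, M^p_8$ of case (v)(a).

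In the primitive case the strategy is to look for an invariant complex structure $J$ on $\R^n$ under which the rotations of $G$ become complex reflections. If such a $J$ exists, the Shephard--Todd classification identifies $G$ with one of $\Wp$ (case (i), when $G$ is the rotation subgroup of a real reflection group), an irreducible unitary reflection group that is not a complexification of a real one (case (ii)), or the extension $\Ws$ by a normalizing rotation (case (iv)). The nontrivial remainder consists of primitive rotation groups admitting no such $J$. Here I would invoke Jordan-type order bounds on primitive subgroups of $\GL_n(\R)$ together with the constraint that $G$ is generated by codimension-two rotations to confine $n$ to the range $5\leq n \leq 8$ and the abstract isomorphism type of $G$ to a short finite list; a direct character-theoretic verification in each such dimension then produces exactly the exceptional primitive groups $R_5(\alt_5)$, $R_6(\PSL_2(7))$, and the dimension-$8$ extension of $\alt_8$ of case (v)(b)--(c).

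The hardest part will be proving exhaustiveness of the exceptional primitive list: one must rule out near-miss candidates dimension by dimension through explicit character-table or module-theoretic computation, which is where the enumeration work of Huffman--Wales becomes essential. Dimension four is treated separately as case (vi), because the double cover of $\SOr_4$ splits as $\SU_2\times\SU_2$ and a host of additional primitive rotation groups arise from pairs of finite subgroups of $\SU_2$ that cannot be absorbed into any of the preceding families. Finally the uniqueness assertion ``in precisely one case'' would follow by comparing group orders, ambient dimensions, and isomorphism types of both the abstract group and the defining representation, invariants which pairwise distinguish all six families.
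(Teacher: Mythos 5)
This theorem is not proved in the paper at all: it is the classification result imported verbatim from \cite{LaMik} (which in turn rests on the work of Huffman--Wales and Mikha\^ilova), and the parenthetical lemma references indicate only where each class of groups is \emph{treated in view of the main PL result}, not where the classification is established. So your proposal is really an attempt to re-derive \cite{LaMik}, a long case analysis that a one-page sketch cannot replace. Your outline does follow the broad architecture of that classification (primitive/imprimitive dichotomy, analysis of how a rotation can interact with a system of imprimitivity, reduction of the primitive case to unitary reflection groups plus a finite exceptional list), but two of its structural claims fail.

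In the primitive case you propose: either $G$ preserves a complex structure $J$, in which case Shephard--Todd yields cases (i), (ii), (iv), or else Jordan-type order bounds confine $n$ to $5\le n\le 8$. Neither half is correct. The groups $\Wp$ of case (i) and $\Ws$ of case (iv) do not in general preserve a complex structure --- e.g.\ $\Wp(\Hn_3)\cong\alt_5$ acts primitively on $\R^3$, and $\Wp(\A_n)\cong\alt_{n+1}$ acts primitively on $\R^n$ for all large $n$ --- so they fall into your ``no $J$'' branch, which therefore contains irreducible primitive rotation groups in \emph{every} dimension; no Jordan-type bound (these grow like $(n+1)!$) can confine that branch to $n\le 8$. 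The actual mechanism is to complexify the representation and classify complex linear groups generated by elements with a codimension-two eigenspace; the real reflection groups enter through their rotation subgroups, not through an invariant $J$. In the imprimitive case, your claim that a rotation either lies in a single block or swaps two two-dimensional blocks omits rotations acting as a product of two transpositions, or as a $3$-cycle, on \emph{one}-dimensional blocks; these are precisely the generators of the monomial groups, and the one-dimensional-block case also produces the infinite families $\Wp(\BC_n)$ and $\Wp(\D_n)$ of case (i), not only the finitely many exceptional monomial groups of case (v)(a). Finally, the ``precisely one case'' assertion requires verifying non-conjugacy across families, which is additional work in \cite{LaMik}; comparing orders, dimensions and abstract isomorphism types does not by itself decide conjugacy in $\SOr_n$.
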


For a real reflection group $W$ we denote by $\Wt$ its unique extension by a normalizing rotation, provided such exists, i.e. $\Wt=\left\langle \Ws,W \right\rangle$. For a monomial rotation group $M$ we denote by $\Mt$ its extension by a coordinate reflection (cf. Section \ref{sub:monomial_groups_hom}). Finally, for an imprimitive rotation group of type $G(km,k,l)$ let $\Gt (km,k,l)$ be its extension by a reflection $s$ of the form $s(z_1,\ldots, z_l)=(\overline{z}_1,z_2,\ldots, z_l)$ (cf. Section \ref{sub:imprimitive_rot_groups}). The following result holds (cf. \cite[Thm.~2]{LaMik}).
\begin{thm}\label{thm:irr_re-ro_group}
Every irreducible reflection-rotation group either appears in Theorem \ref{thm:clas_red_psg} or it contains a reflection and occurs, up to conjugation, in one of the following cases.
\begin{compactenum}
\item Irreducible real reflection groups $W$ (Lem. \ref{lem:main_refl}).
\item The groups $\Wt$ generated by a reflection group $W$ of type $\A_4$, $\D_4$, $\F_4$, $\A_5$ or $\E_6$ and a normalizing rotation (Lem. \ref{lem:main_refl}).
\item The monomial groups $\Mt$ of type $\D_n$, $\Pp_5$, $\Pp_6$, $\Pp_7$, $\Pp_8$, i.e. $\Mt(\D_n):=D(W(\BC_n)) \rtimes \alt_n$, $\Mt_5$, $\Mt_6$, $\Mt_7$ and $\Mt_8$ (Lem. \ref{lem:monom}).
\item The imprimitive groups $\Gt (km,k,l)\subgr\SOr_{n}$ with $n=2l$, $k=1,2$ and $km\geq3$ (Lem. \ref{lem:imprimi}).
\end{compactenum}
The references in the parentheses indicate where the respective group is treated in view of our main result.
\end{thm}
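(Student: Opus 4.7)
The plan is to classify irreducible reflection-rotation groups $G<\Or_n$ that contain at least one reflection, since those containing no reflection are precisely the irreducible rotation groups already treated in Theorem \ref{thm:clas_red_psg}. The central observation is that the subgroup $W\triangleleft G$ generated by all reflections of $G$ is a normal reflection subgroup, because conjugation by any orthogonal transformation preserves the property of being a reflection. The analysis then splits into two cases according to whether $W$ acts irreducibly or reducibly on $\R^n$.

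Suppose first that $W$ is irreducible. Then $W$ is one of the Coxeter groups from the standard classification, and $G$ is sandwiched between $W$ and its normalizer $N_{\Or_n}(W)$. The quotient $G/W$ injects into the group of outer automorphisms of $W$ realized by orthogonal transformations. Going through the Coxeter diagrams, the only cases in which a nontrivial diagram symmetry can be realized by a rotation (rather than a reflection or a composition of reflections that already lies in $W$) are $\A_4$, $\D_4$, $\F_4$, $\A_5$ and $\E_6$, where the relevant symmetry is induced in the appropriate ambient dimension by an element of $\SOr_n$. This yields case (ii); the case $G=W$ is case (i).

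Suppose next that $W$ is reducible. Decompose $\R^n=V_1\oplus\cdots\oplus V_k$ into $W$-irreducible summands. Since $G$ normalizes $W$ and acts irreducibly on $\R^n$, it permutes the $V_i$ transitively, and the $V_i$ carry isomorphic $W$-modules. The stabilizer of a single summand must act on it as a reflection-rotation group, and any element of $G$ permuting two distinct summands nontrivially must itself be a rotation or a reflection. When each $V_i$ is one-dimensional this leads exactly to the monomial groups $\Mt(\D_n)$ and $\Mt_5,\Mt_6,\Mt_7,\Mt_8$ of case (iii), once one enumerates the transitive permutation groups whose combination with the $\pm 1$-action produces only reflections and rotations. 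When each $V_i$ is two-dimensional with a cyclic $W$-action, the combination of permutations with the conjugation reflection $s$ yields the imprimitive families $\Gt(km,k,l)$ of case (iv); the constraints $k\in\{1,2\}$ and $km\geq 3$ are precisely those needed so that all generators are reflections or rotations and so that the diagonal subgroup is nontrivial.

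The main obstacle is showing that no other reducible configuration survives, i.e.\ that summands of dimension $\geq 3$ cannot occur. The decisive point is that if $\dim V_i\geq 3$ then any element of $G$ interchanging two copies of $V_i$ would have fixed subspace of codimension $\geq 3$ and therefore fail to be a reflection or a rotation. Combined with the constraint that the stabilizer action on an individual summand is itself a reflection-rotation group (easy to enumerate in dimensions $1$ and $2$), this forces the reducible case into the two situations listed and completes the classification.
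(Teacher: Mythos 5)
First, a point of reference: the paper does not prove this theorem at all --- it is quoted verbatim from the classification paper \cite[Thm.~2]{LaMik}, and the hard enumerative work behind it lives there and in the earlier papers of Mikha\^ilova and of Huffman--Wales. So your proposal is not competing with an in-paper argument but with an entire companion classification. Your strategic skeleton is the right one and does match how that classification is organized: pass to the normal subgroup $W\triangleleft G$ generated by all reflections, split on whether $W$ is irreducible, and in the reducible case observe that an orthogonal map interchanging two $d$-dimensional summands has fixed space of codimension at least $d$, so that for $d\geq 3$ no reflection or rotation moves a summand and irreducibility fails. (One sentence needs repair: it is false that ``any element of $G$ permuting two distinct summands must itself be a rotation or a reflection'' --- $G$ contains many products that are neither; the correct statement, which you do use later, is that the \emph{generators} are reflections and rotations and that none of these can move a summand of dimension $\geq 3$, so $G$ would preserve each summand. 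You should also note that the fixed space of $W$ must be zero, else it is a proper $G$-invariant subspace.)

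The genuine gap is that everything listed in the theorem's four items is exactly the content you assert rather than prove. In the irreducible-$W$ case you need to determine, for each irreducible Coxeter type, whether the nontrivial cosets of $N_{\Or_n}(W)/W$ contain a \emph{rotation}; the answer $\A_4,\D_4,\F_4,\A_5,\E_6$ is a case-by-case computation with explicit normalizers, not a formal consequence of the diagram-automorphism classification. Much more seriously, in the one-dimensional-summand case you must classify the transitive subgroups $H<\sym_n$ for which $D^{\scriptscriptstyle+}(n)\rtimes H$ is generated by rotations: this is where the four exceptional monomial groups $\Mt_5,\ldots,\Mt_8$ in dimensions $5$--$8$ come from, and isolating them (and proving no others exist for any $n$) is the substance of \cite{MR608821} and of the Huffman--Wales analysis of linear groups containing an element with an eigenspace of codimension two \cite{MR0401936,MR0401937}. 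Likewise the constraints $k\in\{1,2\}$, $km\geq 3$ in the imprimitive case require verifying which subgroups of $A(km,k,l)\rtimes\sym_l$ are actually generated by elements of codimension $\leq 2$, not merely that the stated groups work. Your writeup says ``once one enumerates\dots'' at precisely the points where the theorem's content resides; as it stands the proposal reduces the theorem to these enumerations without carrying any of them out, so it should be regarded as an outline of the proof in \cite{LaMik} rather than a proof.
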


For a reducible reflection-rotation group $G\subgr \Or_n$ let $\R^n=V_1 \oplus \ldots \oplus V_k$ be a decomposition into irreducible components, let $G_i$ be the projection of $G$ to $O(V_i)$ and let $H_i\triangleleft G_i$ be the normal subgroup of $G$ generated by rotations in $G$ that only act in $V_i$. Then the group $G_i$ is generated by its reflections and by $H_i$. If $G_i$ does not split off as a direct factor, then $G_i$ contains a reflection and is thus distinct from $H_i$. Conversely, every pair $M \triangleleft G_{rr}$ of an irreducible reflection-rotation group $G_{rr}$ that contains a reflection and a normal subgroup $M \triangleleft G_{rr}$ generated by rotations such that $G_{rr}$ is generated by its reflections and by $M$ occurs in this way. The following theorem is proven in \cite[Thm.~3]{LaMik} (cf. the remark following \cite[Thm.~3]{LaMik}).

\begin{thm} \label{thm:reducible_pairs} Let $G_{rr}<\Or_n$ be an irreducible reflection-rotation group that contains a reflection and let $M \triangleleft G_{rr}$ be a nontrivial normal subgroup generated by rotations. If $G_{rr}$ is generated by the reflections it contains and by $M$, then the pair $M \triangleleft G_{rr}$ occurs in precisely one of the following cases.
\begin{enumerate}
\item $M \triangleleft \Mt$ for $M=M_5, M_6,M_7, M_8, M(\D_n)=\Wp(\D_n)$ \hfill (Lem. \ref{lem:monom})
\item $\Gs(km,k,l)\triangleleft \Gt (km,k,l)$ for $k=1,2$, $km\geq 3$ and $n=2l$ \hfill (Lem. \ref{lem:line_impr})
\item $\Gs(2m,2,l)\triangleleft \Gt (2m,1,l)$ for $m\geq 2$ and $n=2l$ \hfill (Lem. \ref{lem:line_impr}) 
\item $\Wp \triangleleft W$ for any irreducible reflection group $W$. \hfill (Lem. \ref{lem:main_refl})
\item $D(\Wp(\BC_n))\triangleleft W(\BC_n)$ \hfill (Lem. \ref{lem:line_refl})
\item $\Wp(\D_n) \triangleleft W(\BC_n)$ \hfill (Lem. \ref{lem:main_refl})
\item $D(W(\D_n)) \triangleleft W(\D_n)$ \hfill (Lem. \ref{lem:line_refl})
\item $\Ws \triangleleft \Wt$ for a reflection group $W$ of type $\A_5$ or $\E_6$. \hfill (Lem. \ref{lem:main_refl})
\item Other examples in dimensions $n\leq 4$ \hfill (Lem. \ref{lem:main_uptofour_sum})
\end{enumerate}
In the lemmas referred to, we either show the existence of a nontrivial rotation group $H\triangleleft M$ normalized by $G_{rr}$ for which the linearization principle can be applied to the groups $H\triangleleft G_{rr}$ directly or reduce such a claim to the validity of our main result for reflection-rotation groups of order less than $G_{rr}$.
\end{thm}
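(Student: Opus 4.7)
The enumeration of the pairs $M \triangleleft G_{rr}$ in cases (i)--(ix) is not proved in the present paper; it is cited from \cite[Thm.~3]{LaMik} together with the remark following it, where the correspondence between pairs arising in reducible reflection-rotation groups and pairs with the listed abstract property is worked out. My plan is therefore to accept this enumeration and concentrate on what is genuinely added here: for each listed pair, either exhibit a nontrivial rotation subgroup $H\triangleleft M$ normalized by $G_{rr}$ for which the PL linearization principle applies, or reduce the existence of such an $H$ to the main theorem for reflection-rotation groups of strictly smaller order than $G_{rr}$.

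First I would group the cases by the nature of $G_{rr}$. For the real-reflection pairs $\Wp\triangleleft W$ of case (iv), as well as the related pairs $\Wp(\D_n)\triangleleft W(\BC_n)$ in (vi) and $\Ws\triangleleft \Wt$ in (viii), the group $H=\Wp$ itself does the job: Lemma \ref{lem:main_refl} realizes $S^{n-1}/\Wp$ as the double of the spherical fundamental simplex of $W$ along its boundary and shows that any reflection in $W$ acts on the double by interchanging the two copies, yielding the linearization via Lemma \ref{lem:gluing_pl_balls}. For the monomial pair $M\triangleleft \Mt$ in (i) and the pairs $D(\Wp(\BC_n))\triangleleft W(\BC_n)$ in (v) and $D(W(\D_n))\triangleleft W(\D_n)$ in (vii), one takes $H$ to be the relevant diagonal subgroup $D^{+}(n)$: Lemmas \ref{lem:line_refl} and \ref{lem:monom} exhibit $S^{n-1}/D^{+}(n)$ as the suspension of $\partial\Lambda$ whose cone points are interchanged by a coordinate reflection, which gives the desired equivariant PL identification.

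For the imprimitive pairs in (ii) and (iii), I would let $H\triangleleft M$ be the normal subgroup generated by those rotations of $M$ that act in a single block of a system of imprimitivity; this is a product of cyclic groups acting two-dimensionally on each block and is nontrivial as soon as $kq\ge 3$. Since each factor acts in dimension $2\le 4$, Lemma \ref{lem:main_uptofour_sum} supplies the PL linearization principle block by block and hence globally, which is precisely Lemma \ref{lem:line_impr}. The residual low-dimensional exceptional pairs in case (ix) fall directly under Lemma \ref{lem:main_uptofour_sum}, which asserts that in dimension at most four the PL linearization principle can be applied to any normal rotation subgroup by combining equivariant PD-smoothing \cite{Lange} with the (equivariant) smooth Poincar\'e conjecture on $S^3$.

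The hard part will not be any single step but rather organizational and verificational: one must confirm that the cases produced by \cite[Thm.~3]{LaMik} match exactly the cases handled in Lemmas \ref{lem:monom}, \ref{lem:line_refl}, \ref{lem:main_refl}, \ref{lem:line_impr}, \ref{lem:main_uptofour_sum}, and that in each case the normal subgroup $H$ one selects is indeed normalized by $G_{rr}$ (so that the PL linearization principle, which requires a homomorphism $r:G_{rr}\to \Or_n$ with kernel $H$, is applicable). The most substantive technical input beyond the classification itself is Lemma \ref{lem:main_uptofour_sum}, on which both the imprimitive cases and the low-dimensional exceptional cases ultimately rest.
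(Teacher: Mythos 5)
Your proposal is correct and follows essentially the same route as the paper: the enumeration itself is imported from \cite[Thm.~3]{LaMik} (together with the remark following it), and the only content added here is the case-by-case verification, via Lemmas \ref{lem:main_refl}, \ref{lem:line_refl}, \ref{lem:monom}, \ref{lem:line_impr} and \ref{lem:main_uptofour_sum}, that each pair admits a nontrivial normalized rotation subgroup $H\triangleleft M$ to which the PL linearization principle applies, or that this reduces to the main theorem for groups of smaller order. Your choices of $H$ in each case (the orientation-preserving subgroup $\Wp$, the diagonal subgroup $D^{+}(n)$, and the product of block-wise cyclic groups in the imprimitive cases, handled block by block through Lemma \ref{lem:main_uptofour_sum}) coincide with those made in the corresponding sections of the paper.
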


\end{document}